\documentclass[a4paper, 12pt]{article}

\usepackage[utf8]{inputenc}
\usepackage[russian, english]{babel}
\usepackage[T1]{fontenc}
\usepackage[a4paper, margin=2cm]{geometry}
\usepackage{needspace}

\usepackage{libertine} 
\usepackage{inconsolata} 

\usepackage{amsmath, amsthm, amssymb}
\numberwithin{equation}{section}
\usepackage{graphicx}
\usepackage{enumerate}
\usepackage{authblk}
\usepackage{thmtools}
\usepackage{thm-restate}
\usepackage{float}
\usepackage{tablefootnote}
\usepackage{booktabs}
\usepackage{microtype}
\usepackage{mathtools}
\usepackage[nocompress]{cite}
\usepackage[colorlinks=true, citecolor=red]{hyperref}

\declaretheorem[name=Theorem, numberwithin=section]{theorem}
\declaretheorem[name=Lemma, sibling=theorem]{lemma}

\declaretheorem[name=Conjecture, sibling=theorem]{conjecture}

\DeclareMathOperator{\Cay}{Cay}
\DeclareMathOperator{\CP}{CP}
\DeclareMathOperator{\COP}{COP}
\DeclareMathOperator{\BQC}{BQC}
\DeclareMathOperator{\tr}{tr}

\let\le\leqslant
\let\ge\geqslant

\newcommand{\T}{\mathbb T}
\newcommand{\Z}{\mathbb Z}
\newcommand{\R}{\mathbb R}
\newcommand{\Q}{\mathbb Q}

\title{Measurable obstructions for unmeasurable colourings}

\author{James Davies\thanks{
The author was supported by the Alexander von Humboldt Foundation in the framework of the Alexander von Humboldt Professorship of Daniel Kráľ endowed by the Federal Ministry of Education and Research.}}

\affil{Leipzig University, Germany}
\date{}

\begin{document}

\maketitle

\begin{abstract}
Due to the availability of powerful analytic techniques, vastly superior lower bounds are known for the measurable chromatic number of Euclidean spaces compared to their ordinary chromatic number.
Indeed, even the breakthrough lower bound of 5 for the famous Hadwiger-Nelson problem lagged over 35 years behind that of the measurable setting.
This raises the fundamental question of whether the measurable and ordinary chromatic number of Euclidean spaces differ as conjectured by Székely in 1984 and since reiterated numerous times.

Our main result is that $\overline\alpha(\R^4)=m_1(\R^4)$ and $\chi(\R^4)=\chi^{(m)}(\R^4)$, and for $d\ge 5$ that
\[
\overline\alpha(\Q^d) =
\overline\alpha(\R^d)=m_1(\R^d)
\qquad\text{and}\qquad
\chi(\Q^d) =
\chi(\R^d)=\chi^{(m)}(\R^d).
\]
We remark that the $d\ge 5$ condition is necessary for the extra conclusion on $\Q^d$.
Our theorem also holds for multiple forbidden distances $D=\{d_1, \ldots , d_t\}$ provided that $d_1^2, \ldots , d_t^2 \in \Q$.
As a consequence, we immediately lift numerous measurable chromatic number results into the ordinary chromatic number setting.
We also take the opportunity to further optimize the new bounds.

For multiple distances $D$ with $|D|\le k$, Erd\H{o}s asked for estimates on the chromatic number $\chi_D(\R^d)$ of $\R^d$ with forbidden distances $D$, and
in particular, whether $\sup_{|D|=k}\chi_D(\R^d)$ grows exponentially in $k$.
By a theorem of Bukh, we obtain for $d \ge 4$ that
\[
\inf_{|D|=k}\overline\alpha_D(\R^d) = m_1(\R^d)^k
\qquad
\text{and thus}
\qquad
\sup_{|D|=k}\chi_D(\R^d) \ge m_1(\R^d)^{-k}.
\]
This also implies that for any finite colouring of $\R^4$ there exists some rational $R$ such that for every rational $r\ge R$ there is a monochromatic pair of points whose distance is exactly $\sqrt{r}$.

We also make progress on another problem of Erd\H{o}s by proving that
\[
(2+o(1))^d
\le
\chi(\R^d)
\le
\left(\frac{3\sqrt{3}}{4} + o(1) \right)^d.
\]
The upper bound is given by a surprisingly simple probabilistic argument.
The lower bound argument is also used to vastly improve the bounds for $\chi(\R^d)$ for small $d\ge 5$.

We expect that our techniques could be developed much further. This includes the possibility of extending our main theorem that $\chi(\R^d)=\chi^{(m)}(\R^d)$ for $d\ge 4$ to $d=3$ or possibly even to $d=2$ to tackle the Hadwiger-Nelson problem.
\end{abstract}

\pagebreak

\section{Introduction}

The Hadwiger-Nelson problem asks for the chromatic number of the plane:
the minimum number of colours needed to colour $\R^2$ so that points at
distance $1$ receive different colours. See Soifer~\cite{soifer2009coloring} for its rich history.
The Moser spindle~\cite{mosermoser1961problem10} gives the classical lower bound $4$, and de Grey's breakthrough~\cite{degrey2018plane} raised this to $5$ in 2018. 
A hexagonal tiling gives the upper bound $7$
\cite{soifer2009coloring}, so
\[
 5\le\chi(\R^2)\le7.
\]
More generally, one may ask for the chromatic number $\chi(\R^d)$ of the
unit-distance graph in $d$-dimensional Euclidean space.  Frankl and
Wilson~\cite{franklwilson1981intersection} proved Erd\H{o}s's conjecture
that this number grows exponentially with the dimension.
Erd\H{o}s~\cite{erdos1985combinatorialgeometry} asked more precisely whether
$\lim_{d\to\infty}\chi(\R^d)^{1/d}$ exists and, if so, what its value is.
Subsequent refinements of Raigorodskii~\cite{raigorodskii2000chromatic}, together with the upper bound of Larman and Rogers~\cite{larmanrogers1972}, gave
\[
 (1.239\ldots+o(1))^d\le\chi(\R^d)\le(3+o(1))^d.
\]

Requiring the colour classes to be Lebesgue measurable gives the
measurable chromatic number $\chi^{(m)}(\R^d)$.
This significant additional assumption makes powerful analytic methods available, and the resulting lower bounds have
historically been much stronger. 
Falconer~\cite{falconer1981realization}
proved $\chi^{(m)}(\R^2)\ge5$ in 1981, more than 35 years before the same
lower bound was known for arbitrary colourings.  In dimension three,
Nechushtan~\cite{nechushtan2002space} proved $\chi(\R^3)\ge6$, whereas
de Oliveira Filho and Vallentin~\cite{oliveiravallentin2010fourier} proved
$\chi^{(m)}(\R^3)\ge7$.  For other small dimensions, the measurable
lower bounds have been increasingly stronger than the ordinary ones; see
Table~\ref{tab:previous-new-lowdim}.  The disparity also persists
asymptotically: Bachoc, Passuello and Thiery~\cite{bachoc2015density}
proved that $\chi^{(m)}(\R^d) \ge (1.268\ldots+o(1))^d$.

The distinction is even more pronounced when several distances are forbidden.
For $D\subseteq(0,\infty)$ and $X\subseteq\R^d$, write $\chi_D(X)$ for
the chromatic number of the graph whose edges join points at distances
in $D$, and write $\chi_D^{(m)}(\R^d)$ for its measurable chromatic
number.  In dimensions at least two, positive-density distance theorems
imply that $\chi_D^{(m)}(\R^d)=\infty$ whenever $D$ is unbounded.
Furstenberg, Katznelson and Weiss~\cite{furstenbergkatznelsonweiss1990}
gave an ergodic-theoretic proof; Bourgain~\cite{bourgain1986szemeredi}
used Fourier analysis, and Falconer and
Marstrand~\cite{falconermarstrand1986distances} gave a direct geometric
argument.  The same conclusion
follows from Steinhaus's theorem~\cite{steinhaus1920distances} when
$\inf D=0$.  For arbitrary colourings, even the case of odd distances $D=\{1,3,\ldots\}$ as in Rosenfeld's odd-distance
problem~\cite{rosenfeld2008problems} remained open for nearly three
decades. This was proven by the
author~\cite{davies2023odd}, and Davies, McCarty and
Pilipczuk~\cite{dmp2024} subsequently obtained analogous results for prime
and polynomial distances.

Alongside colouring, we consider the density of distance-avoiding sets.
A set is $D$-avoiding if it contains no pair of points at a distance in
$D$.  Write $m_D(\R^d)$ for the supremum of the upper densities of
measurable $D$-avoiding sets, and $\overline\alpha_D(X)$ for the infimum
of the independence ratios of the nonempty finite induced distance
graphs on $X$.  We write $m_1$ and omit the subscript on
$\overline\alpha$ when $D=\{1\}$.

For $k\ge1$, put
\[
 F_d(k):=\sup_{|D|=k} \chi_D(\R^d).
\]
Erd\H{o}s~\cite{erdos1985combinatorialgeometry,croftfalconerguy1991}
asked how $F_d(k)$ grows with $k$, in particular whether it is exponential.  A square-lattice construction gives
$F_2(k)\ge c k\sqrt{\log k}$ for an absolute constant $c>0$, whereas
taking the product of $k$ rescaled unit-distance colourings gives
$F_2(k)\le7^k$.
Bukh~\cite{bukh2008measurable} proved that, for $d\ge2$, the infimum
of $m_D(\R^d)$ over $k$-element distance sets is $m_1(\R^d)^k$,
approached by sets of distances whose consecutive distances become widely separated. In particular, the maximum measurable chromatic number grows exponentially in $k$ since $\chi_D^{(m)} \ge 1/m_D(\R^d)$.

Sz\'ekely~\cite{szekely1984measurable} conjectured that the ordinary and
measurable chromatic numbers of Euclidean spaces can differ.  Croft,
Falconer and Guy~\cite[Problem~G10]{croftfalconerguy1991} discuss this, noting Sz\'ekely's work as evidence against equality.
The question and related measurability phenomena recur throughout the
literature; see
\cite{szekelywormald1989,soifer2009coloring,shelahsoifer2003choice,soifershelah2004examples,soifer2005choice,payne2007ambiguous,bachocnebeoliveiravallentin2009,oliveiravallentin2010fourier,bachocdecorteoliveiravallentin2014,keleti2016better,ambrus2024density,kechrismarks2020}.
Our main result proves that both the density and chromatic parameters coincide with
their measurable counterparts in dimension at least four.
In dimensions at least five, they are
even already determined by the rational points.
Furthermore, this result holds both for unit distance graphs and multiple distance graphs provided that the squares of these distances are rational.

\Needspace{12\baselineskip}
\begin{theorem}\label{thm:main}
Let $D=\{d_1,\ldots,d_t\}\subset (0,\infty)$ be finite and
nonempty with $d_1^2, \ldots ,d_t^2\in \Q$.  Then
\[
 \overline\alpha_D(\R^4)=m_D(\R^4)
 \qquad\text{and}\qquad
 \chi_D(\R^4)=\chi_D^{(m)}(\R^4),
\]
furthermore, for $d\ge 5$,
\[
\overline\alpha_D(\Q^d)=
\overline\alpha_D(\R^d)=m_D(\R^d)
 \qquad\text{and}\qquad
 \chi_D(\Q^d)= \chi_D(\R^d)=\chi_D^{(m)}(\R^d).
\]
\end{theorem}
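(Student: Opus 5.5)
The plan is to prove the density statement first and then obtain the chromatic statement from it. The inequalities $\overline\alpha_D(\Q^d)\ge\overline\alpha_D(\R^d)$ (the infimum over $\Q^d$ runs over fewer finite graphs) and $\chi_D(\Q^d)\le\chi_D(\R^d)\le\chi_D^{(m)}(\R^d)$ are trivial. Next, $\overline\alpha_D(\R^d)\le m_D(\R^d)$ can be handled by an averaging argument. Given a finite induced subgraph $G$ on points $P$, a measurable $D$-avoiding set $A$ meets a random translate of $P$ in an independent set. Averaging over translates then shows $|A\cap(P+x)|\le\alpha(G)$ on average, so $m_D\le\alpha(G)/|P|$. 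This is the wrong direction; it shows $m_D\le\overline\alpha_D$. So the real content is the reverse inequality $\overline\alpha_D(X)\le m_D(\R^d)$, with $X=\Q^d$ for $d\ge5$ and $X=\R^4$. Concretely, we must exhibit finite distance graphs whose independence ratio is close to $m_D$.

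For this reverse inequality I would use the standard Fourier/linear-programming description of $m_D$. The key point is to approximate the spherical averaging operator by averages over finite sets of rational points. For $d\ge5$, every sufficiently large integer $n$ is a sum of $d$ squares, with the number of representations $r_d(n)\asymp n^{d/2-1}$. Moreover, by the equidistribution of lattice points on spheres (Linnik/Duke-type results, which are elementary via the circle method for $d\ge5$), the points $\Z^d\cap\sqrt n S^{d-1}$, rescaled by $1/N$ with $d_i^2=n/N^2$, equidistribute on the sphere of radius $d_i$. Since $d_i^2\in\Q$, such rational representations exist. I would take $G$ to be the distance graph on a large box $\frac1N\Z^d\cap[-L,L]^d$ (or a torus $(\frac1N\Z/L\Z)^d$ to remove boundary effects), with edges given by distances in $D$. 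An independent set $I$ in $G$ is then a $D$-avoiding subset of a finite abelian group. Taking the periodic union of small cubes around the points of $I$ does not produce a $D$-avoiding set. Instead, I would argue dually: any upper bound for $m_D$ certified by the Lovász-theta/LP bound is asymptotically attained by such discrete graphs. For the primal direction, I would show that every measurable extremal behaviour is reproduced on the torus. That is, I would pass to a limit of normalized indicator functions of maximum independent sets on the discrete tori. This limit is a weak-* limit measure $f$ on $\R^d/L\Z^d$ with density $\overline\alpha$. The equidistribution of rational spheres should force $f*\sigma_{d_i}=0$ on the support in an averaged sense, that is, $\int f\,(f*\sigma_{d_i})=0$. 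A measurable function $0\le f\le1$ with this vanishing autocorrelation yields $m_D\ge\int f$ after a standard "flattening" step via Lebesgue density. This gives $\overline\alpha_D(\Q^d)\le m_D$.

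For colouring, given a proper $k$-colouring of $\Q^d$, I would apply the same limiting procedure to each colour class simultaneously, scaling by $N\to\infty$ along the grids $\frac1N\Z^d$. This produces measurable functions $f_1,\dots,f_k$ with $\sum f_j=1$ and each $\int f_j(f_j*\sigma_{d_i})=0$. From these I want a measurable $k$-colouring. I would try to obtain one via the known equivalence that $\chi^{(m)}$ equals the fractional-style bound given by such autocorrelation data, or via a direct rounding. This rounding is the step I expect to be the main obstacle. Functions with zero autocorrelation at distance $d_i$ need not come from sets, and converting $[0,1]$-valued weights into genuine colour classes probably needs a compactness/ultralimit argument on the space of colourings itself, rather than on densities. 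For $d=4$, representations by four squares exist but do not equidistribute along all $n$ (for example $n=2^{k}$). So I would use real points, taking rotated copies of rational lattices $\Z^4$ under dense rational rotations. Here $\R^4$ replaces $\Q^4$, which matches the remark that $d\ge5$ is necessary for the $\Q^d$ conclusion.
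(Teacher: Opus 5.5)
\textbf{The central step fails.} You claim that the weak-$*$ limit $f$ of the normalized indicators of large independent sets $I_N$ on the grids $\frac1N\Z^d$ satisfies $\int f\,(f*\sigma_{d_i})=0$. Weak-$*$ convergence does not pass through this quadratic form. Equidistribution of the rescaled lattice spheres only controls the Fourier transform of the discrete spherical measure $\sigma_N$ at bounded frequencies. Elsewhere $\widehat{\sigma_N}$ is not small near frequencies that are rational with small denominator relative to the grid; it is even $N\Z^d$-periodic. Spectral mass of $\mathbf 1_{I_N}$ sitting there can cancel the low-frequency contribution.

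Here is a concrete counterexample. Let $D=\{1\}$ and let $N$ be odd. If $x,y\in\Z^d$ with $\|x-y\|^2=N^2$, then $x-y$ has odd coordinate sum. So the parity of the coordinate sum properly $2$-colours the unit distance graph on $\frac1N\Z^d$. For the even class $g_N$, the identity $0=\int g_N\,(g_N*\sigma_N)$ is the sum of $\frac14\widehat{\sigma_N}(0)=\frac14$ and $\frac14\widehat{\sigma_N}(\frac N2(1,\ldots,1))=-\frac14$. The weak limit sees only the first term: it is the constant $\frac12$, whose spherical autocorrelation is nonzero. In the colouring version you would get $f_1=f_2=\frac12$.

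So the discrete graphs must be designed to defeat these bounded-denominator (major arc) obstructions, and this is the heart of the paper. Magyar's estimate (Theorem~\ref{lem:magyar}) gives decay away from such rationals. Malyshev's equidistribution is used only near $0$. The planar filter of Lemma~\ref{lem:dmp-planar} lets one average over many integral rotations or similitudes, so that any fixed frequency is close to a nonzero rational of denominator at most $Q$ for only a small proportion of the copies.

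Your four-dimensional suggestion has a separate defect. Rational rotations of $\Z^4$ keep every point inside $\Q^4$, where $\chi(\Q^4)=4$ while $\chi^{(m)}(\R^4)\ge11$, so nothing built inside $\Q^4$ can work. The paper needs the irrational rotations $C_j$ involving $\vartheta=\sqrt n$ and an injective map $\Z^8\to\Q(\sqrt n)^4$.

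The paper also argues dually rather than by taking limits of independent sets. The LP/theta bound you mention is in general strictly larger than $m_D$, so transferring it would only bound $\overline\alpha_D$ by that larger value. Instead the paper uses the exact completely-positive characterization, namely the converses of Theorems~\ref{prop:scalar-certificate} and~\ref{prop:chromatic-certificate}: a radial inequality plus copositive constraints on finite rational configurations $U_q$. Theorems~\ref{thm:radial-transfer-four} and~\ref{thm:radial-transfer} turn the radial inequality into a positive trigonometric polynomial on $\T^r$. They do this while transporting each $U_q$ isometrically as a whole, via quaternionic multiplication $\mathsf L_x$ in dimension four. Theorems~\ref{thm:lattice-bound} and~\ref{thm:lattice-colouring} then bound the resulting Cayley graphs of $\Z^r$.

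Finally, the step you single out as the main obstacle is not one. Suppose you had measurable $f_1,\ldots,f_k\ge0$ with $\sum_jf_j=1$ and vanishing autocorrelation at every $d_i$. Then the density points of the sets $\{f_j>0\}$ would be $D$-avoiding and cover a conull set. Lemma~\ref{lem:conull-colouring-completion} then completes this to a measurable colouring, exactly as in Lemma~\ref{lem:labelled-cp-extraction}. The genuine obstacle is producing such a limit in the first place.
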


Unit-distance colouring of rational spaces $\Q^d$ also has
substantial literature, including arithmetic constructions and
small-dimensional bounds
\cite{woodall1973distances,chilakamarri1988rational,chilakamarri1990five,bendaperles2000colorings,cibulka2008chromatic}
and asymptotic estimates
\cite{raigorodskii2001borsuk,raigorodskiishitova2008multiple,ponomarenkoraigorodskii2013rational,ponomarenkoraigorodskii2015rational}.
In particular, $\chi(\Q^2)=\chi(\Q^3)=2$ and $\chi(\Q^4)=4$
\cite{woodall1973distances,bendaperles2000colorings}, so the restriction
$d\ge5$ in our rational-point conclusion is necessary.
Our four-dimensional construction instead uses a quadratic extension;
in the unit-distance case, there exists a positive nonsquare integer $n$
such that $\chi(\R^4)=\chi(\Q(\sqrt n)^4)$.
We remark that a conjecture of Bukh \cite{bukh2008measurable} suggests that some kind of extra algebraic condition on the multiple distances $D$ as in Theorem \ref{thm:main} may be necessary.

Theorem \ref{thm:main} has numerous applications lifting bounds from the easier measurable setting into the ordinary density or chromatic number settings.

For the multiple-distance problem in higher dimensions, Raigorodskii
\cite{raigorodskii2001borsuk} obtained lower bounds of the form
$(c_1k)^{c_2d}$ with $c_1,c_2>0$; Berdnikov
\cite{berdnikov2016several} showed that any $c_2<1/2$ is admissible.
Naslund~\cite{naslund2023multiple} made this dependence quantitative:
for each fixed $k$,
\[
 F_d(k)\ge\bigl(\Gamma_\chi\sqrt{k+1}+o(1)\bigr)^d,
 \qquad \Gamma_\chi=0.79983\ldots,
 \qquad(d\to\infty).
\]
These earlier bounds did not settle whether the dependence on the
number of distances is polynomial or exponential in each fixed dimension.

Combining Bukh's \cite{bukh2008measurable} measurable result with Theorem~\ref{thm:main} gives the following exponential bound.

\begin{theorem}\label{thm:intro-multiple-distances}
For every $d\ge 4$, $k\ge 1$, and $\epsilon>0$, there exists a real $R$ such that if $D=\{d_1,\ldots,d_k\}$ with $d_2/d_1,\ldots ,d_k/d_{k-1} \ge R$ and $d_1^2,\ldots ,d_k^2 \in \Q$, then
\[
 \overline\alpha_D(\R^d)
 \le
 (m_1(\R^d)+\epsilon)^k,
\]
and in particular
\[
\chi_D(\R^d) \ge (m_1(\R^d) + \epsilon)^{-k}.
\]
\end{theorem}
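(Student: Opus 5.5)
The plan is to combine Theorem~\ref{thm:main} with Bukh's measurable theorem~\cite{bukh2008measurable}, but in the quantitative form that actually comes out of his proof rather than the bare infimum statement. Bukh shows more than $\inf_{|D|=k} m_D(\R^d)=m_1(\R^d)^k$. His argument proves that for every $\epsilon>0$ and $k$ there is an $R$ such that every measurable $D$-avoiding set has upper density at most $(m_1(\R^d)+\epsilon)^k$, whenever $D=\{d_1<\cdots<d_k\}$ has all consecutive ratios at least $R$. So the first step is to record this ratio-separated version. The lower bound $m_D(\R^d)\ge m_1(\R^d)^k$, which comes from product-type constructions, is not needed here.

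If I had to reprove the separated version, I would induct on $k$ by scaling. Write the $D$-avoiding set $A$ as avoiding both $D'=\{d_1,\ldots,d_{k-1}\}$ and $d_k$. Then average the density of $A$ over balls of radius $\rho$ with $d_{k-1}\ll\rho\ll d_k$. Inside each such ball, $A$ is essentially an arbitrary $D'$-avoiding set, so its local density is at most $(m_1+\epsilon')^{k-1}$ by induction. The function recording local density at scale $\rho$ is, viewed at scale $d_k$, a "fuzzy" unit-distance-avoiding weight. Bukh's key lemma is that $m_1$ is robust under this smoothing: a measurable function $f\colon\R^d\to[0,1]$ with $\int f(x)f(x+d_k u)$ small over all unit $u$ has mean at most about $m_1$. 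The step where I expect the real work, if it has to be redone, is making this averaging decoupling rigorous, including the robustness of $m_1$ and the uniformity of the error in $R$. Since this is precisely Bukh's theorem, I would cite it.

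Second, apply Theorem~\ref{thm:main}. The distances satisfy $d_i^2\in\Q$ and $d\ge4$, so $\overline\alpha_D(\R^d)=m_D(\R^d)\le(m_1(\R^d)+\epsilon)^k$, which is the first claim. Theorem~\ref{thm:main} is stated for every finite $D$ with rational squares, so this holds for all such $D$ satisfying the ratio condition simultaneously, with the same $R$.

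Finally, for the chromatic bound, take any proper colouring of $\R^d$ with $\chi_D(\R^d)$ colours and any nonempty finite induced distance graph $G$ on $\R^d$. Some colour class covers at least $|V(G)|/\chi_D(\R^d)$ vertices of $G$, so the independence ratio of $G$ is at least $1/\chi_D(\R^d)$. Taking the infimum over $G$ gives $\overline\alpha_D(\R^d)\ge 1/\chi_D(\R^d)$. Hence $\chi_D(\R^d)\ge \overline\alpha_D(\R^d)^{-1}\ge(m_1(\R^d)+\epsilon)^{-k}$. This also covers the trivial case $\chi_D=\infty$.
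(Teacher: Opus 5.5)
Your proposal is correct and follows the paper's route: the paper obtains this theorem by combining Bukh's ratio-separated measurable bound with the identity $\overline\alpha_D(\R^d)=m_D(\R^d)$ from Theorem~\ref{thm:main}, which applies because $d\ge4$ and $d_1^2,\ldots,d_k^2\in\Q$. The chromatic statement then follows from the elementary inequality $\chi_D(\R^d)\ge 1/\overline\alpha_D(\R^d)$, exactly as you argue, and since you cite Bukh's theorem rather than rely on your heuristic sketch of its proof, nothing further is needed.
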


An immediate corollary of this is the following theorem. For dimensions at least four this significantly extends the results for odd \cite{davies2023odd} and for prime and polynomial \cite{dmp2024} distances in the plane.

\begin{theorem}\label{thm:intro-large-distances}
For every finite colouring of $\R^4$, there exists $R>0$ such that, for
every $r>0$ with $r^2\in\Q$ and $r^2\ge R$, there is a monochromatic
pair of points at distance $r$.
\end{theorem}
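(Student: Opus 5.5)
We deduce Theorem~\ref{thm:intro-large-distances} from Theorem~\ref{thm:intro-multiple-distances} (with $d=4$) by contradiction. Suppose $c\colon\R^4\to[n]$ is a colouring with $n$ colours such that for every $R>0$ there is some $r>0$ with $r^2\in\Q$, $r^2\ge R$, and no monochromatic pair at distance $r$. Call such $r$ \emph{good}. The good distances then form an unbounded set $G$, and every element of $G$ has rational square.

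First fix the parameters. Since $m_1(\R^4)<1$ (indeed $m_1(\R^4)\le 1/2$, as any measurable $1$-avoiding set has density at most that of an independent set in a unit segment), we may choose $\epsilon>0$ with $m_1(\R^4)+\epsilon<1$. Then choose $k$ so large that $(m_1(\R^4)+\epsilon)^{-k}>n$. Let $R_0$ be the real number provided by Theorem~\ref{thm:intro-multiple-distances} for $d=4$, this $k$ and this $\epsilon$.

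Next, using unboundedness of $G$, we choose good distances $d_1<d_2<\cdots<d_k$ inductively. Take any $d_1\in G$. Given $d_{i}$, pick $d_{i+1}\in G$ with $d_{i+1}\ge R_0 d_i$, which is possible because $G$ is unbounded. Set $D=\{d_1,\ldots,d_k\}$. Then each $d_i^2\in\Q$ and each ratio $d_{i+1}/d_i\ge R_0$, so Theorem~\ref{thm:intro-multiple-distances} gives
\[
\chi_D(\R^4)\ge (m_1(\R^4)+\epsilon)^{-k}>n .
\]
On the other hand, $c$ has no monochromatic pair at any distance in $D$, since every $d_i$ is good. So $c$ is a proper $n$-colouring of the $D$-distance graph, and $\chi_D(\R^4)\le n$. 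This is a contradiction.

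Finally, observe that the statement quantifies over $r^2\ge R$ rather than $r\ge R$. The two are equivalent up to replacing $R$ by $R^2$, so the conclusion is as stated. There is no serious obstacle here: all the depth lies in Theorem~\ref{thm:intro-multiple-distances}. The only points to check are that $m_1(\R^4)<1$, so that the exponential bound eventually exceeds $n$, and that the ratio condition can be met using the unboundedness of the set of good distances.
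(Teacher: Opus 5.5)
Your argument is correct and is essentially the paper's: it presents this theorem as an immediate corollary of Theorem~\ref{thm:intro-multiple-distances}. The deduction picks, from the unbounded set of distances $r$ with $r^2\in\Q$ that the colouring avoids, $k$ of them with consecutive ratios at least $R$, where $k$ is chosen so that $(m_1(\R^4)+\epsilon)^{-k}$ exceeds the number of colours. One detail should be made explicit: choose $d_{i+1}\ge\max\{R_0,2\}\,d_i$, so that the $d_i$ are genuinely distinct and $|D|=k$ even when $R_0<1$.
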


For unit distances,
Theorem~\ref{thm:main} of course makes the measurable lower bound available for
arbitrary colourings in dimensions at least four.
In particular, by Bachoc, Passuello and Thiery's~\cite{bachoc2015density} theorem, we already have that $\chi(\R^d) \ge (1.268\ldots+o(1))^d$, improving on Raigorodskii's~\cite{raigorodskii2000chromatic} bound.
We shall also take the opportunity to improve these analytic
bounds themselves.
We prove the following

\begin{theorem}\label{thm:intro-asymptotic}
\[
m_1(\R^d) \le (2+o(1))^{-d}
\qquad \text{and therefore} \qquad
 \chi(\R^d)\ge(2+o(1))^d.
\]
\end{theorem}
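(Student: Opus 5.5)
The second inequality follows from the first by Theorem~\ref{thm:main}, since $\chi(\R^d)=\chi^{(m)}(\R^d)\ge 1/m_1(\R^d)$ for $d\ge 5$. So the whole task is to show that a measurable set $A\subseteq\R^d$ avoiding distance $1$ has upper density at most $(2+o(1))^{-d}$. I would work with the standard spectral (Delsarte/Lovász $\vartheta$-type) framework of Oliveira--Vallentin and Bachoc--Passuello--Thiery. Let $f(x)=\lim \mathrm{vol}(A\cap(A-x)\cap B_R)/\mathrm{vol}(B_R)$ be the autocorrelation of $A$, taking a periodic approximation of $A$ so that the limit exists. Then $f$ is positive definite, $f(0)=\delta(A)$, $f\ge 0$, $f$ vanishes on the unit sphere, and its mean value is $\delta(A)^2$. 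For any test measure $\mu$ supported on finitely many radii, with radial Fourier transform $\hat\mu(u)=\sum_i c_i\Omega_d(r_i u)$, the usual duality gives $\delta(A)\le$ the optimum of the corresponding linear program. Here $\Omega_d(t)=\Gamma(d/2)(2/t)^{d/2-1}J_{d/2-1}(t)$.

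To reach base $2$, I would combine the sphere constraint with clique/simplex-type constraints, as Bachoc--Passuello--Thiery do. The unit-distance graph contains regular simplices of side $1$ with $d+1$ vertices. More usefully, for $x$ in the sphere of radius $\sqrt2\,\cdot$ (suitable), $A$ restricted to that sphere must avoid the relevant induced distances. The key geometric input I would aim for is the following. For a generic point, consider the sphere $S$ of radius $1/\sqrt2$ around it. Two points of $S$ at distance $1$ are exactly orthogonal directions, so $A\cap S$ is an orthogonality-avoiding set on $S^{d-1}$. Frankl--Wilson and Raigorodskii-type bounds, in their measurable forms via the spherical LP with Gegenbauer polynomials, control this. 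Averaging over centres then gives $\delta(A)\le m_{\perp}(S^{d-1})$, the maximal measure of a subset of the sphere with no orthogonal pair.

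The real goal is then to prove $m_\perp(S^{d-1})\le(2+o(1))^{-d}$. I would do this with a Delsarte LP on the sphere. Choose a polynomial $F(t)=\sum_k a_k P_k^{(d)}(t)$ with $a_k\ge0$, $a_0>0$, and $F(t)\le0$ for $t$ outside a neighbourhood of $0$ not enforced. One also uses $F(0)$ free, since inner product $0$ is forbidden, and bounds the density by $F(1)/a_0$ up to normalisation. A Kabatiansky--Levenshtein-style choice concentrated near $t=\pm1$ should give exponent related to the spherical cap of angular radius $\pi/4$. That cap has measure $(\sin\tfrac{\pi}{4})^{d(1+o(1))}=2^{-d/2}$, which is not enough. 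So the improvement must come from also using nonnegativity of the three-point correlations, or from iterating the sphere restriction. A set avoiding orthogonality restricts to a lower-dimensional sphere, orthogonal complement slices, and so on, with a product of losses.

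The main obstacle is exactly producing the base $2$, and I expect the exponent-doubling from $\sqrt2$ to $2$ to be the crux. What I would try first is a tensorisation or iterative argument. Fix $x\in A$ on the sphere, pass to the subsphere $x^\perp$, where $A$ is excluded, and use a flag of nested constraints from orthonormal frames. For a random orthonormal frame $e_1,\dots,e_d$, all pairs among $\pm e_i$ (suitably scaled) are at the forbidden distance or antipodal. This yields at most two hits per frame, giving only $2/(2d)$. Instead I would use frames in $\R^d$ scaled so that points $\tfrac1{\sqrt2}(\pm e_i\pm e_j)$ or cube-like configurations $\{\pm1\}^d/\sqrt{d}\cdot c$ have unit-distance graphs with independence ratio about $2^{-d(1-o(1))}$. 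Averaging such a configuration over rotations and translations gives $\delta(A)\le\overline\alpha(\text{config})$. So I would search for a finite configuration, for instance a subset of a scaled hypercube $\{0,1\}^d$ with distance realised by Hamming distance $\approx d/2$ or via Frankl--Wilson-type forbidden intersection, whose independence ratio is $(2+o(1))^{-d}$. Then I would bound that independence ratio with a Frankl--Rödl-type theorem, which forbids one intersection size among subsets of $\{0,1\}^d$ and so cuts a $2^{-\epsilon d}$ fraction. Getting the full $2^{-d}$ from such a configuration, rather than $2^{-cd}$, is where I anticipate the argument must combine the analytic LP with the combinatorial configuration, and this is the step I would need to work hardest on.
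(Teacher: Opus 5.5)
Your deduction of the chromatic bound from the density bound via Theorem~\ref{thm:main} is fine. The density bound $m_1(\R^d)\le(2+o(1))^{-d}$, however, is never proved: the proposal stops at the step that matters, and both routes you sketch provably cannot reach base $2$.

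The sphere-slicing inequality $\delta(A)\le m_\perp(S^{d-1})$ is valid but lossy in a way that cannot be recovered. Two antipodal open caps of angular radius $\pi/4$ contain no orthogonal pair and have measure $2^{-d/2+o(d)}$, so $m_\perp(S^{d-1})\ge(\sqrt2+o(1))^{-d}$. The shortfall you noticed is therefore not a defect of the Kabatiansky--Levenshtein-type polynomial. No spherical LP, three-point bound, or iterated slicing applied after this reduction can beat base $\sqrt2$.

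The hypercube route fails as well. With forbidden Hamming distance $t$, both Hamming balls of radius less than $t/2$ and codes of minimum distance greater than $t$ are independent. Averaging over the translations $x\mapsto x\oplus z$ shows that every subset of $\{0,1\}^d$ has independence ratio at least that of the whole cube. That ratio is at least $c^{-d}$ for an absolute constant $c<2$, and Frankl--R\"odl-type theorems only give savings of the form $2^{-\epsilon d}$ in any case.

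The missing idea is the paper's Theorem~\ref{lem:base-two-certificate}. You record only that the autocorrelation vanishes on the unit sphere, which yields a single radial constraint. After periodic approximation and passing to a compact subset of a period cell, a unit-distance-avoiding set avoids a whole interval of distances around $1$. After smoothing, its autocorrelation $\sum_\xi a_\xi e^{2\pi i\xi\cdot x}$ (with $a_\xi\ge0$, $a_0=\delta^2$, $\sum_\xi a_\xi\le\delta$) then vanishes on an annulus. Hence $\sum_\xi a_\xi\Omega_d(r\|\xi\|)=0$ for all $r$ near $1$, so every $r$-derivative vanishes at $r=1$. By the radial Laplacian identity you may then test against $H(s)=P(s^2)\Omega_d(s/(2\pi))$ for an arbitrary polynomial $P$ with $P(0)=1$, and obtain $\delta\le B/(1+B)$ whenever $H\ge-B$. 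No simplex, clique, or finite-configuration constraints are needed.

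The paper takes $\deg P=\lfloor\nu/2\rfloor$ with $\nu=d/2-1$. It places roots at the zeros $j_\ell\le2\nu$ of $J_\nu$, so that $H\ge0$ on $[0,2\nu]$. The remaining roots are placed so that, in the variable $y=\nu^2/s^2$, the normalized root measure tends to half an average of arcsine measures on $[0,u^{-2}]$, $1\le u\le2$. The logarithmic potential of this measure equals $1-3\log2$ throughout $[0,1/4]$ (Lemma~\ref{lem:base-two-root-placement}). Together with $|J_\nu|\le1$ and $\Gamma(\nu+1)(2/\nu)^\nu=e^{(\log2-1+o(1))\nu}$, this gives $B\le2^{-d+o(d)}$.
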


For small dimensions, Section~\ref{sec:mone-bounds} gives new certified
density bounds for $5\le d\le24$, and hence chromatic lower bounds by
Theorem~\ref{thm:main}.  For example, we obtain
$\chi(\R^5)\ge24$, $\chi(\R^{10})\ge729$, and
$\chi(\R^{24})\ge10{,}515{,}016$.
Table~\ref{tab:previous-new-lowdim} compares all these bounds with the
previous best known ones.  In dimension four, the new columns record the transfer
of the previous measurable bounds of DeCorte, de Oliveira Filho and
Vallentin~\cite{decorte2022complete}.

\begin{table}[H]
\centering
\small
\renewcommand{\arraystretch}{1.08}
\begin{tabular*}{\textwidth}{@{\extracolsep{\fill}}
  c
  r@{\,}l
  r@{\,}l
  r@{\,}l
  r@{\,}l
  r
  r
  @{}}
\toprule
& \multicolumn{8}{c}{\textbf{Previous bounds}}
& \multicolumn{2}{c}{\textbf{New bounds}}\\
\cmidrule(lr){2-9}\cmidrule(l){10-11}
$d$
& \multicolumn{2}{c}{$\overline\alpha(\R^d)$}
& \multicolumn{2}{c}{$m_1(\R^d)$}
& \multicolumn{2}{c}{$\chi(\R^d)$}
& \multicolumn{2}{c}{$\chi^{(m)}(\R^d)$}
& {$\overline\alpha(\R^d)=m_1(\R^d)$}
& {$\chi(\R^d)=\chi^{(m)}(\R^d)$}\\
\midrule
4  & 0.16666667 & {\scriptsize\cite{balko2020almost}}          & 0.09857010 & {\scriptsize\cite{decorte2022complete}} & 10  & {\scriptsize\cite{voronovpersonalcommunication}} & 11   & {\scriptsize\cite{decorte2022complete}} & 0.09857010 & 11 \\
5  & 0.12500000 & {\scriptsize\cite{balko2020almost}}          & 0.06244850 & {\scriptsize\cite{decorte2022complete}} & 10  & {\scriptsize\cite{voronovpersonalcommunication}}      & 17   & {\scriptsize\cite{decorte2022complete}} & 0.042234949 & 24 \\
6  & 0.07659575 & {\scriptsize\cite{parts2026polymath16}}      & 0.04503250 & {\scriptsize\cite{decorte2022complete}} & 14  & {\scriptsize\cite{parts2026polymath16}}      & 23   & {\scriptsize\cite{decorte2022complete}} & 0.019296807 & 52 \\
7  & 0.05987395 & {\scriptsize\cite{parts2026polymath16}}      & 0.02607820 & {\scriptsize\cite{decorte2022complete}} & 18  & {\scriptsize\cite{parts2026polymath16}}      & 39   & {\scriptsize\cite{decorte2022complete}} & 0.010261216 & 98 \\
8  & 0.04033215 & {\scriptsize\cite{parts2026r8}}              & 0.01909450 & {\scriptsize\cite{decorte2022complete}} & 25  & {\scriptsize\cite{parts2026r8}}              & 53   & {\scriptsize\cite{decorte2022complete}} & 0.0057762907 & 174 \\
9  & 0.04033215 & {\scriptsize\cite{parts2026r8}}              & 0.01465770 & {\scriptsize\cite{bachoc2015density}}   & 30  & {\scriptsize\cite{parts2026polymath16}}      & 69   & {\scriptsize\cite{bachoc2015density}}   & 0.0028018974 & 357 \\
10 & 0.03333334 & {\scriptsize\cite{cherkashin2018small}}      & 0.01116210 & {\scriptsize\cite{bachoc2015density}}   & 33  & {\scriptsize\cite{parts2026polymath16}}      & 90   & {\scriptsize\cite{bachoc2015density}}   & 0.0013718654 & 729 \\
11 & 0.02878788 & {\scriptsize\cite{cherkashin2018small}}      & 0.00862918 & {\scriptsize\cite{bachoc2015density}}   & 35  & {\scriptsize\cite{cherkashin2018small}}      & 116  & {\scriptsize\cite{bachoc2015density}}   & $7.2303784\times10^{-4}$ & 1,384 \\
12 & 0.02680653 & {\scriptsize\cite{exooismailescu2014small}}  & 0.00611112 & {\scriptsize\cite{bachoc2015density}}   & 37  & {\scriptsize\cite{cherkashin2018small}}      & 164  & {\scriptsize\cite{bachoc2015density}}   & $3.9758787\times10^{-4}$ & 2,516 \\
13 & 0.02185315 & {\scriptsize\cite{cherkashin2018small}}      & 0.00394335 & {\scriptsize\cite{bachoc2015density}}   & 46  & {\scriptsize\cite{cherkashin2018small}}      & 254  & {\scriptsize\cite{bachoc2015density}}   & $1.7808728\times10^{-4}$ & 5,616 \\
14 & 0.01923077 & {\scriptsize\cite{cherkashin2018small}}      & 0.00300288 & {\scriptsize\cite{bachoc2015density}}   & 52  & {\scriptsize\cite{cherkashin2018small}}      & 334  & {\scriptsize\cite{bachoc2015density}}   & $7.8820187\times10^{-5}$ & 12,688 \\
15 & 0.01703297 & {\scriptsize\cite{cherkashin2018small}}      & 0.00242258 & {\scriptsize\cite{bachoc2015density}}   & 59  & {\scriptsize\cite{cherkashin2018small}}      & 413  & {\scriptsize\cite{bachoc2015density}}   & $4.2750273\times10^{-5}$ & 23,392 \\
16 & 0.01517858 & {\scriptsize\cite{cherkashin2018small}}      & 0.00161646 & {\scriptsize\cite{bachoc2015density}}   & 66  & {\scriptsize\cite{cherkashin2018small}}      & 619  & {\scriptsize\cite{bachoc2015density}}   & $2.4066521\times10^{-5}$ & 41,552 \\
17 & 0.01360295 & {\scriptsize\cite{cherkashin2018small}}      & 0.00110487 & {\scriptsize\cite{bachoc2015density}}   & 74  & {\scriptsize\cite{cherkashin2018small}}      & 906  & {\scriptsize\cite{bachoc2015density}}   & $1.1553067\times10^{-5}$ & 86,558 \\
18 & 0.01225491 & {\scriptsize\cite{cherkashin2018small}}      & 0.00084949 & {\scriptsize\cite{bachoc2015density}}   & 82  & {\scriptsize\cite{cherkashin2018small}}      & 1178 & {\scriptsize\cite{bachoc2015density}}   & $5.6103486\times10^{-6}$ & 178,243 \\
19 & 0.01109392 & {\scriptsize\cite{cherkashin2018small}}      & 0.00074601 & {\scriptsize\cite{bachoc2015density}}   & 91  & {\scriptsize\cite{cherkashin2018small}}      & 1341 & {\scriptsize\cite{bachoc2015density}}   & $2.9406514\times10^{-6}$ & 340,061 \\
20 & 0.01008772 & {\scriptsize\cite{cherkashin2018small}}      & 0.00046910 & {\scriptsize\cite{bachoc2015density}}   & 100 & {\scriptsize\cite{cherkashin2018small}}      & 2132 & {\scriptsize\cite{bachoc2015density}}   & $1.5393277\times10^{-6}$ & 649,635 \\
21 & 0.00877193 & {\scriptsize\cite{bogolubsky2019remark}}     & 0.00031431 & {\scriptsize\cite{bachoc2015density}}   & 114 & {\scriptsize\cite{bogolubsky2019remark}}     & 3182 & {\scriptsize\cite{bachoc2015density}}   & $6.7317355\times10^{-7}$ & 1,485,502 \\
22 & 0.00722395 & {\scriptsize\cite{bogolubsky2019remark}}     & 0.00024622 & {\scriptsize\cite{bachoc2015density}}   & 139 & {\scriptsize\cite{bogolubsky2019remark}}     & 4062 & {\scriptsize\cite{bachoc2015density}}   & $3.3140690\times10^{-7}$ & 3,017,439 \\
23 & 0.00584796 & {\scriptsize\cite{bogolubsky2019remark}}     & 0.00021227 & {\scriptsize\cite{bachoc2015density}}   & 171 & {\scriptsize\cite{bogolubsky2019remark}}     & 4712 & {\scriptsize\cite{bachoc2015density}}   & $1.7655897\times10^{-7}$ & 5,663,831 \\
24 & 0.00478469 & {\scriptsize\cite{bogolubsky2019remark}}     & 0.00018437 & {\scriptsize\cite{bachoc2015density}}   & 209 & {\scriptsize\cite{bogolubsky2019remark}}     & 5424 & {\scriptsize\cite{bachoc2015density}}   & $9.5102091\times10^{-8}$ & 10,515,016 \\
\bottomrule
\end{tabular*}
\caption{Previous and new bounds in dimensions $4$--$24$.
The $\overline\alpha$ and $m_1$ columns are upper bounds; the $\chi$
and $\chi^{(m)}$ columns are lower bounds.  The new columns use
Theorem~\ref{thm:main} to transfer the previous measurable bounds when
$d=4$, and the certificates of Section~\ref{sec:mone-bounds} when
$5\le d\le24$.  The latter density bounds are rounded upward to eight
significant figures, and their integer bounds use the exact rational
certificate parameters in the computational supplement~\cite{MoneData}.}
\label{tab:previous-new-lowdim}
\end{table}

For upper bounds, we also give a surprisingly simple probabilistic
argument reducing the previous
upper-bound base $3$ of Larman and
Rogers~\cite{larmanrogers1972} to $3\sqrt3/2 = 2.5981\ldots$.

\begin{theorem}\label{thm:intro-upper-bound}
	\[
	\chi(\R^d)\le\left(\frac{3\sqrt3}{2}+o(1)\right)^d.
	\]
\end{theorem}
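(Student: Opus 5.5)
The plan is to separate the bound into two independent parts. First, exhibit a measurable, periodic set $S\subseteq\R^d$ with no two points at distance exactly $1$ and density at least $\bigl(\tfrac{2}{3\sqrt3}-o(1)\bigr)^d$. Second, cover $\R^d$ by $N$ translates of $S$ with
\[
N \le \frac{C\,d\log d}{\mathrm{dens}(S)}.
\]
Colouring each point by the first translate containing it then gives a proper colouring, since each colour class lies in a translate of $S$. Hence
\[
\chi(\R^d)\le \bigl(\tfrac{3\sqrt3}{2}+o(1)\bigr)^d,
\]
because the factor $d\log d$ is $(1+o(1))^d$.

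\textbf{Covering step.} This is a standard probabilistic (Erdős--Rogers type) covering argument, and I take it as the routine part. Let $S$ be periodic with period lattice $\Lambda$, and work on the torus $\R^d/\Lambda$. Choose $M=\lceil d\log d/\mathrm{dens}(S)\rceil$ independent uniform translates of $S$. A fixed point is uncovered with probability $(1-\mathrm{dens}(S))^{M}\le d^{-d}$, so the expected uncovered measure is tiny. One then removes the remaining uncovered set with a greedy cover by further translates, at a cost of at most a factor $(1+o(1))^d$ more colours. Alternatively, one can first shrink $S$ slightly to an open set whose $\eta$-neighbourhood still avoids distance $1$, and cover a fine net instead of every point.

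\textbf{Construction of $S$.} Fix a parameter $\rho<1/2$ and place centres at a random point set $P$: a Poisson process of intensity $\lambda$, or a random lattice via Minkowski--Hlawka to keep periodicity. Set $A=\bigcup_{c\in P}B(c,\rho)$. Inside a single ball no unit distance occurs, since $2\rho<1$. Define $S$ as the set of $x\in A$ that have no point of $A$ in a different ball at distance exactly $1$. Whether $x$ is deleted depends only on centres in the shell $\{c' : \bigl||c'-x|-1\bigr|\le\rho\}$. The crude estimate of this shell's volume by $(1+\rho)^d$ reproduces only the Larman--Rogers base $3$ at $\rho=1/2$.

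The improvement has to come from high-dimensional concentration. Almost all of $B(c,\rho)$ lies near the sphere of radius $\rho$ about $c$. For typical points $x\in B(c,\rho)$ and $y\in B(c',\rho)$ the vectors $x-c$, $y-c'$ and $c-c'$ are nearly orthogonal, so
\[
|x-y|^2\approx |c-c'|^2+|x-c|^2+|y-c'|^2 .
\]
So I would restrict $S$ further to points in a thin shell $|x-c|\approx\rho$. The conflicting centres $c'$ then essentially satisfy $|c'-c|^2\approx 1-2\rho^2$, a condition of much smaller volume than the full $(1+\rho)^d$ shell. Atypical, non-orthogonal configurations would be controlled by the exponentially small measure of spherical caps. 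Choosing $\lambda$ so that the expected deletion fraction is at most $1/2$ leaves a density of the form $\exp\bigl(d\log f(\rho)+o(d)\bigr)$ for an explicit $f$. I expect the optimum, obtained by optimising over $\rho$ together with the cap angle, to be a one-variable problem of the shape $\max_t t(1-t^2)=\tfrac{2}{3\sqrt3}$, attained at $t=1/\sqrt3$.

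\textbf{Main obstacle.} The hard step is making this conflict estimate rigorous and sharp. One must bound, uniformly in the position of $x$, the probability that some point of another ball, restricted to its typical shell, lies exactly on $S(x,1)$. This requires care because only existence of a conflict matters, not its measure. My first attempt would be to intersect each ball with a thin shell and bound the $c'$-volume of the $\rho$-neighbourhood of the relevant spherical band via cap volumes. I would then verify that the exponent optimises to $\tfrac{2}{3\sqrt3}$. If that optimum does not come out, the fallback is to replace balls by shells or cap-shaped cells and redo the same optimisation.
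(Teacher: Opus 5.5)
The covering step is fine in its net version: random copies cover a grid of mesh $O(1/d)$, and you pass to $\varepsilon/3$-neighbourhoods, which needs $S$ to avoid a whole interval $[1-\varepsilon,1+\varepsilon]$ of distances. The greedy variant is not justified for a general measurable $S$, but you do not need it.

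\textbf{The gap: the construction of $S$.} Sharper estimates cannot fix this; your deletion rule cannot beat base $3$. You delete $x$ whenever some admissible point of another ball lies at distance exactly $1$. Take a centre $c'$ with $\|c'-x\|=D$. The distances from $x$ to points of the sphere of radius $\rho$ about $c'$ fill the interval $[|D-\rho|,D+\rho]$. So $c'$ conflicts with $x$ if and only if $1-\rho\le D\le1+\rho$, and this holds whether partners range over the whole ball or only over its thin outer shell.

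Hence the set of conflicting centres is always the full annulus about $x$ with outer radius $1+\rho$, of volume $(1+o(1))v_d(1+\rho)^d$. The estimate you call crude is in fact sharp. The construction therefore gives density only about $(\rho/(1+\rho))^{d}\le 3^{-d}$. The near-orthogonality heuristic only describes typical pairs at distance $1$, so it controls the \emph{measure} of conflicting pairs. A colouring needs \emph{no} conflict at all, and every $c'$ in the annulus produces one. Your predicted optimisation $\max_t t(1-t^2)$ is fitted to the target value rather than derived.

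\textbf{The missing idea: cooperative trimming.} Do not delete every point that has a partner; resolve each pair of nearby centres $a,b$ by trimming both balls. The paper takes $r=(1-\varepsilon)/2$ and keeps
\[
A_a=\Bigl\{a+x:\ \|x\|<r,\ x\cdot\tfrac{b-a}{\|b-a\|}<\tfrac{\|b-a\|-1-\varepsilon}{2}\ \text{for all other centres } b\Bigr\}.
\]
Both balls are thus cut by hyperplanes parallel to the perpendicular bisector, at distance $(1+\varepsilon)/2$ on either side. Two points of one piece are less than $1-\varepsilon$ apart. Points of different pieces differ by more than $1+\varepsilon$ in the direction $b-a$.

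The centre $a+su$ removes $a+x$ exactly when $s\le1+\varepsilon+2x\cdot u$. This region is a cardioid of revolution. Its largest distance from the axis is $(1+\varepsilon)\max_t(1+t)\sqrt{1-t^2}=(1+\varepsilon)\tfrac{3\sqrt3}{4}$, attained at $t=1/2$. So it lies in a cylinder of volume $W\le v_d(\tfrac{3\sqrt3}{4}+o(1))^d$, much smaller than the annulus of outer radius $3/2$ that your rule forbids.

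Now place about $2^d/(2W)$ uniform random centres in the period cell $[0,2)^d$. Each point of a ball then survives with probability at least $1/2$. So every fixed point lies in $A$ with probability at least $v_dr^d/(4W)=(\tfrac{2}{3\sqrt3}-o(1))^d$. With $\varepsilon=d^{-1/2}$ the $(1+\varepsilon)^d$ loss is $e^{o(d)}$, and your covering step finishes the proof.
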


Together with Theorems~\ref{thm:intro-asymptotic},
this gives the following bounds for the two classical problems:
\[
(2+o(1))^d\le\chi(\R^d)
\le\left(\frac{3\sqrt3}{2}+o(1)\right)^d
\]
and
\[
\left(\frac{3\sqrt3}{2}+o(1)\right)^{-d}
\le m_1(\R^d)\le(2+o(1))^{-d}.
\]
We boldly conjecture that one of these two bases is tight and cowardly do not specify which one.

\medskip

Our starting point for the proof of Theorem~\ref{thm:main} is the completely-positive framework of DeCorte,
de Oliveira Filho and Vallentin~\cite{decorte2022complete}.  Their
characterization of measurable independence density gives the finite
certificates in Theorem~\ref{prop:scalar-certificate}.
Theorem~\ref{prop:chromatic-certificate} is a chromatic analogue built
on the same framework.  These certificates express density upper bounds
and failure of measurable colourability through radial inequalities and
finite rational configuration constraints.  We prove the corresponding bounds
for lattice Cayley graphs of $\Z^r$, then construct lattice embeddings that
approximate the radial inequalities while preserving the configurations.
The four-dimensional embedding uses a quadratic extension; in
dimensions at least five it lies in $\Q^d$.  These different arithmetic
constructions account for the two conclusions of Theorem~\ref{thm:main}.

We hope that it will be possible to extend our techniques to dimensions $2$ and $3$.
Besides the obvious application to lower bounds, a particularly tempting aspect of Theorem \ref{thm:main} is for $d\ge 5$ it reduces the problem of colouring $\R^d$ to that of colouring $\Q^d$.
This opens up the possibility of using arithmetic techniques to obtain improved upper bounds on $\chi(\R^d)$.
For this reason, it would be particularly interesting to have such a proof that $\chi(\Q^d)$ is finite, even if it does not improve the known bounds.
Although we cannot reduce colouring $\R^d$ to colouring $\Q^d$ for $d=2,3,4$, there is still hope of reducing the problem colouring a simpler subset of $\R^d$. For instance, in the four dimensional case the proof of Theorem \ref{thm:main} shows that there exists some positive integer $n$
such that $\chi(\R^4)=\chi(\Q(\sqrt n)^4)$. We conjecture that this also holds in the three dimensional case.
In the plane we suspect that a more complicated extension of $\Q^2$ might be required.
Nevertheless, the problem of determining the chromatic number of $\Q(\sqrt{n})^2$ has been well studied \cite{Johnson1987,Fischer1990,payne2007ambiguous,Moorhouse2010}.
In particular, it is known that $\Q(\sqrt{n})^2$ is four-colourable for $n \not\equiv 47, 143, 167 \pmod{168}$.
It would be particularly interesting to determine if the remaining cases are six-colourable and if any of them contain a five chromatic unit distance graph.

Given this new approach to improving both the upper and lower bounds for the Hadwiger-Nelson problem, we boldly conjecture the following.
\begin{conjecture}
	$\chi(\R^2)=6$.
\end{conjecture}

Section~\ref{sec:overview} gives the preliminary definitions and
certificate inputs.  Section~\ref{sec:lattice-lp} proves the lattice
density and colouring bounds, and
Section~\ref{sec:lattice-sphere-estimates} supplies the
four-dimensional lattice-sphere estimates, including their quaternionic
form.  Section~\ref{sec:four-dimensional-arithmetic} combines these
estimates with a planar filter to transfer radial inequalities in
dimension four.  Section~\ref{sec:higher-dimensional-arithmetic}
uses the same filter with integer Euclidean similitudes and rational
four-planes to obtain the rational embedding in dimensions at least
five.  Section~\ref{sec:main-theorem} applies these transfer results
to the certificates and completes the proof of Theorem~\ref{thm:main}.
Sections~\ref{sec:high-dimensional} and~\ref{sec:mone-bounds} give,
respectively, the asymptotic lower bounds and numerical certificates
for dimensions $5$--$24$.
Section~\ref{sec:euclidean-upper-bound} proves
Theorem~\ref{thm:intro-upper-bound}
.
Appendix~\ref{app:scalar-certificate} gives the finite-point
extraction for density certificates, and
Appendix~\ref{app:chromatic-certificate} proves the chromatic converse
through completely-positive extraction and separation.

\section{Preliminaries and certificate inputs}\label{sec:overview}

We collect the notation and graph parameters used in the proof, then
state the measurable certificate theorems that supply the inputs for
Sections~\ref{sec:lattice-lp}--\ref{sec:main-theorem}.

We write $[n]:=\{1,\ldots,n\}$ and $\T:=\R/\Z$.  All vectors are column vectors.  For $x,y\in\R^d$ we write
$x\cdot y$ for the Euclidean inner product, $\|x\|$ for the Euclidean norm,
and $\|x\|_\infty:=\max_i|x_i|$.  Put
$S^{d-1}:=\{x\in\R^d:\|x\|=1\}$.  Distances on $\T^r$ are measured using the
quotient of the Euclidean metric on $\R^r$.

\Needspace{5\baselineskip}
For a ring $K$, let $M_{m\times n}(K)$ denote the set of $m\times n$ matrices
with entries in $K$, and put $M_n(K):=M_{n\times n}(K)$.  We write $A^T$ for
the transpose of $A$, $\tr(A)$ for its trace, and $\|A\|$ for its
Euclidean operator norm.  The Kronecker delta $\delta_{ij}$ is $1$ if
$i=j$ and $0$ otherwise.  We use
$I_d$ for the $d\times d$ identity matrix, $e_j$ for the $j$th standard basis
vector, and $A\oplus B$ for block-diagonal direct sum.  Also
\[
 SO(d):=\{A\in M_d(\R):A^TA=I_d,\ \det A=1\},
 \qquad SO(d,\Q):=SO(d)\cap M_d(\Q).
\]
A matrix $B$ is a Euclidean similitude of ratio $\lambda>0$ if
$B^TB=\lambda^2I$, equivalently $\|Bv\|=\lambda\|v\|$ for every $v$.

For a graph $G$, $\chi(G)$ denotes its chromatic number, and
$\overline\alpha(G)$ is the infimum of the independence ratios of its
nonempty finite induced subgraphs.

If $\Gamma$ is an abelian group and
$C\subseteq\Gamma\setminus\{0\}$ satisfies $C=-C$, then
$\Cay(\Gamma,C)$ is the graph in which $x$ and $y$ are adjacent when
$y-x\in C$.  We write $\Z_n:=\Z/n\Z$.
For a set $E$, $\mathbf1_E(x)$ is $1$ if $x\in E$ and $0$ otherwise;
on a finite index set we also view $\mathbf1_E$ as a vector.  For a
statement $P$, $\mathbf1_P$ is $1$ if $P$ holds and $0$ otherwise.
We use $\mathbf1$ for the all-ones vector, with its index set understood.

For finite nonempty $D\subseteq(0,\infty)$, the parameters defined in the
introduction satisfy the elementary comparisons
\begin{equation}\label{eq:mlealpha-base}
 m_D(\R^d)\le \overline\alpha_D(\R^d),
 \qquad \chi_D^{(m)}(\R^d)\ge\frac1{m_D(\R^d)}.
\end{equation}

Restriction to $\Q^d$ also gives
\[
 \overline\alpha_D(\Q^d)\ge\overline\alpha_D(\R^d)\ge m_D(\R^d),
 \qquad
 \chi_D(\Q^d)\le\chi_D(\R^d)\le\chi_D^{(m)}(\R^d).
\]
The main task in Theorem~\ref{thm:main} is to prove the reverse
comparisons.  We do this by transferring finite certificates for the
measurable problems to unrestricted distance graphs.  The
completely-positive framework of DeCorte, de Oliveira Filho and
Vallentin~\cite{decorte2022complete} supplies the density certificates
in Theorem~\ref{prop:scalar-certificate} and underlies their chromatic
analogue, Theorem~\ref{prop:chromatic-certificate}.  Both combine radial
inequalities with matrix constraints on finite configurations.

To describe the radial inequalities, let $\sigma_d$ be normalized
surface measure on $S^{d-1}$.  We use the normalized spherical transform
\[
 \Omega_d(t):=\int_{S^{d-1}}e^{2\pi i t\omega_1}\,d\sigma_d(\omega).
\]
Rotation invariance gives the useful vector form
\[
 \int_{S^{d-1}}e^{2\pi i u\cdot\omega}\,d\sigma_d(\omega)
 =\Omega_d(\|u\|)\qquad(u\in\R^d).
\]
It is real and continuous, $\Omega_d(0)=1$, $|\Omega_d(t)|\le1$, and for
$d\ge2$, $\Omega_d(t)\to0$ as $t\to\infty$.

The matrix constraints use the following cones.  For a finite set $I$, put
\[
 \COP(I):=\{Z\in\R^{I\times I}:Z=Z^T,\ x^TZx\ge0
              \text{ for every }x\in\R_{\ge0}^I\},
\]
\[
 \BQC^*(I):=\{Z\in\R^{I\times I}:Z=Z^T,\ x^TZx\ge0
              \text{ for every }x\in\{0,1\}^I\}.
\]
We write $J_I$ for the all-ones matrix indexed by $I$ and use the
Frobenius pairing
\[
 \langle A,B\rangle:=\sum_{x,y\in I}A(x,y)B(x,y).
\]
Completely positive matrices are finite sums $bb^T$ with
$b\in\R_{\ge0}^I$; we denote their cone by $\CP(I)$.
The cones $\CP(I)$ and $\COP(I)$ are dual under this pairing.
Similarly, $\BQC^*(I)$ is dual to the cone generated by $xx^T$ with
$x\in\{0,1\}^I$.  Since Boolean vectors are nonnegative,
$\COP(I)\subseteq\BQC^*(I)$.
Validity below only uses the Boolean tests, whereas the converses supply
copositive witnesses.  Both matrix classes are preserved by relabelling
their indices and by positive scaling.

For finite graphs, exact copositive formulations are known for the
independence number, by De Klerk and Pasechnik~\cite{deklerkpasechnik2002},
and for the chromatic number, by Gvozdenovi{\'c} and
Laurent~\cite{gvozdenoviclaurent2008psi}.  The first measurable input below
is a finite-point form of the completely-positive characterization of
independence density due to DeCorte, de Oliveira Filho and
Vallentin~\cite{decorte2022complete}.\footnote{Their work does not quite prove the full finite-point converse
stated below.  Appendix~\ref{app:scalar-certificate} fills the small gap
by replacing thick copositive constraints uniformly by finitely many
rational point constraints, and includes the extension to finite $D$.}

\begin{theorem}[DeCorte, de Oliveira Filho,
Vallentin~\cite{decorte2022complete}]\label{prop:scalar-certificate}
Let $D=\{d_1,\ldots,d_t\}\subseteq(0,\infty)$ be finite and nonempty and let
$d\ge2$.  Suppose finite nonempty sets $U_1,\ldots,U_\ell\subseteq\R^d$,
matrices $Z_q\in\BQC^*(U_q)$, and real numbers
$z_0,a_1,\ldots,a_t$ satisfy
\begin{align}
 z_0+\sum_{s=1}^ta_s-\sum_{q=1}^\ell\langle Z_q,J_{U_q}\rangle&\ge1,
 \label{eq:scalar-cert-normalization}\\
 z_0-\sum_{q=1}^\ell\tr (Z_q)+\sum_{s=1}^ta_s\Omega_d(d_s\tau)
 -\sum_{q=1}^\ell \sum_{\substack{u,v\in U_q\\u\ne v}}Z_q(u,v)\Omega_d(\|u-v\|\tau)
 &\ge0
 \qquad(\tau\ge0).\label{eq:scalar-cert-radial}
\end{align}
Then
\[
 m_D(\R^d)\le z_0.
\]
Conversely, for every $\varepsilon>0$ there is such a certificate in which
$U_q\subseteq\Q^d$, $Z_q\in\COP(U_q)\subseteq\BQC^*(U_q)$, and
\[
 z_0<m_D(\R^d)+\varepsilon.
\]
\end{theorem}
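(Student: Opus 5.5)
The plan has two parts. Validity is a weak-duality computation on the autocorrelation of a periodic $D$-avoiding set. The converse comes from the DeCorte--de Oliveira Filho--Vallentin duality followed by a discretization step.

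\emph{Validity.} By a standard approximation, which I take from \cite{decorte2022complete} without checking it here, it suffices to bound the density $\delta>0$ of a measurable periodic $D$-avoiding set $A$, say with period lattice $L$.
\begin{itemize}
\item Put $f(x):=\delta^{-1}\cdot$ (mean over $y$ of $\mathbf1_A(y)\mathbf1_A(y+x)$), normalized so that $f(0)=\delta$. Multiplying by $\delta$ and then dividing again at the end is harmless.
\item The function $f$ is continuous, positive definite and $L$-periodic. It vanishes on $\{x:\|x\|\in D\}$ because $A$ is $D$-avoiding.
\item By Bochner, $f(x)=\sum_{\xi\in L^*}\hat f(\xi)e^{2\pi i\xi\cdot x}$ with $\hat f\ge0$, $\sum_\xi\hat f(\xi)=\delta$ and $\hat f(0)=\delta^2$.
\item Every constraint is rotation invariant, so replace $A$ by a random rotation of itself, i.e.\ average $f$ over $SO(d)$. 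This radializes $f$ to $f(x)=\int\Omega_d(\|x\|\,\|\xi\|)\,d\mu(\xi)$. Here $\mu\ge0$ has total mass $\delta$ and an atom of mass $\delta^2$ at $0$. The rotated configurations keep all the properties used below, since those are averages over translates and rotations.
\end{itemize}

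\emph{The configuration constraints.} For each $q$, form the matrix
\[
M_q(u,v):=\mathbb E_{y,\rho}\,\mathbf1_A(y+\rho u)\mathbf1_A(y+\rho v),\qquad u,v\in U_q.
\]
This is an average of matrices $bb^T$ with $b\in\{0,1\}^{U_q}$. Since $Z_q\in\BQC^*(U_q)$, we get $\langle Z_q,M_q\rangle\ge0$. Its entries are $M_q(u,u)=\delta$ and $M_q(u,v)=f(u-v)$ for $u\neq v$.

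\emph{Integrating the radial inequality.} Integrate \eqref{eq:scalar-cert-radial} at $\tau=\|\xi\|$ against $d\mu(\xi)$.
\begin{itemize}
\item The left side becomes
\[
z_0\delta+\sum_s a_s f(d_se_1)-\sum_q\langle Z_q,M_q\rangle\le z_0\delta,
\]
using $f(d_se_1)=0$ and $\langle Z_q,M_q\rangle\ge0$.
\item On the other side, the atom at $0$ contributes $\delta^2$ times the $\tau=0$ value of the left side of \eqref{eq:scalar-cert-radial}. That value is $z_0+\sum a_s-\sum_q\langle Z_q,J_{U_q}\rangle\ge1$ by \eqref{eq:scalar-cert-normalization}, so the atom contributes at least $\delta^2$.
\item The remaining part of $\mu$ is nonnegative and integrates a nonnegative function, so it contributes at least $0$.
\end{itemize}
Hence $\delta^2\le z_0\delta$, so $\delta\le z_0$, and therefore $m_D(\R^d)\le z_0$.

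\emph{Converse.} Take the completely-positive characterization of $m_D(\R^d)$ from \cite{decorte2022complete}, extended to finitely many distances by adding one variable $a_s$ per distance. Its dual has a feasible solution with value below $m_D+\varepsilon/2$. In that solution the copositivity constraints are ``thick'': they are imposed over all finite configurations, or equivalently a copositive kernel on a compact set of points.

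The step I expect to be the main obstacle is reducing these to finitely many configurations with $Z_q\in\COP(U_q)$ and $U_q\subseteq\Q^d$. I would proceed in three moves.
\begin{itemize}
\item Use compactness of the relevant kernel domain to pass to a finite $\eta$-net of configurations, paying $O(\eta)$ in the radial inequality. This loss is uniform in $\tau$ because $\Omega_d$ is uniformly continuous and tends to $0$ as $\tau\to\infty$.
\item Move each net point to a nearby rational point. This works since $\Q^d$ is dense and the map $(u,v)\mapsto\Omega_d(\|u-v\|\tau)$ is equicontinuous once $\tau$ is restricted to a compact range. On the tail $\tau\ge T$, the positive slack $z_0$ dominates, since all other $\Omega_d$ terms there are small.
\item Absorb the errors by increasing $z_0$ slightly. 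Adding a small multiple of the identity to each $Z_q$ keeps it strictly copositive under perturbation, and its trace cost is likewise absorbed into $z_0$.
\end{itemize}
The bookkeeping that makes the tail estimate uniform in $\tau$ while the $\eta$-net is finite is where the care is needed.
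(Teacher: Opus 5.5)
Your validity argument is correct. Radialize the autocorrelation of a periodic $D$-avoiding set, use $\langle Z_q,M_q\rangle\ge0$ for the averaged Boolean moment matrices, and integrate the radial inequality against the spectral measure with its atom $\delta^2$ at $0$; this gives $\delta\le z_0$. The paper does not prove this direction; it cites DeCorte, de Oliveira Filho and Vallentin. Drop the stray factor $\delta^{-1}$ in your definition of $f$, which contradicts $f(0)=\delta$. Starting the converse from their finite thick certificate is also what the paper does (Lemma~\ref{lem:finite-thick-scalar-certificate}). Note that finiteness of the thick system comes from their hierarchy and strong duality, not from a net.

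The genuine gap is your first move. This is exactly the step Appendix~\ref{app:scalar-certificate} exists for: replacing each thick function $R_{U,Z,\delta}$ by finitely many point constraints with error small \emph{uniformly} in $\tau\ge0$. Your justification does not give this, for two reasons.
\begin{itemize}
\item Replacing $\int_{B_u}\int_{B_v}\Omega_d(\|x-y\|\tau)\,dy\,dx$ by a weighted sum over an $\eta$-net costs an error of order $\eta\tau$. The map $(x,y)\mapsto\Omega_d(\|x-y\|\tau)$ is Lipschitz only with constant of order $\tau$, so uniform continuity of $\Omega_d$ says nothing once $\tau$ is of order $1/\eta$. The decay of $\Omega_d$ only controls pairs of sample points in different (disjoint) balls, not nearby points in the same ball.
\item $R_{U,Z,\delta}\in C_0([0,\infty))$, but every point constraint satisfies $P_{U,Z}(\tau)=\sum_{u,v}Z(u,v)\Omega_d(\|u-v\|\tau)\to\tr(Z)$ as $\tau\to\infty$, because the diagonal terms are identically $\Omega_d(0)=1$. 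So the constant term of the certificate at infinity is $z_0-\sum_q\tr(Z_q)$, not $z_0$. For a discretization of a thick constraint, this trace is precisely what must be driven to zero, and your tail remark overlooks it.
\end{itemize}
Your third move addresses a non-issue. Copositivity is a property of the matrix and is unchanged when you relabel its indices by nearby rational points, so adding $\epsilon I$ is unnecessary and only increases the non-decaying trace.

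The missing idea, as in Lemma~\ref{lem:thick-point-approximation}, is to sample $m$ distinct rational points $x_{u,a}$ in each ball $B_u$, equidistributed as $m\to\infty$, and set $Z_m(x_{u,a},x_{v,b}):=V^2Z(u,v)/m^2$. This matrix has the following properties:
\begin{itemize}
\item it is copositive by aggregating coordinates, since $w^TZ_mw=V^2s^TZs/m^2$;
\item $P_m(0)=R(0)$ exactly, so \eqref{eq:scalar-cert-normalization} is preserved;
\item $\tr(Z_m)=V^2\tr(Z)/m\to0$, and $P_m\to R$ pointwise.
\end{itemize}
The uniformly bounded functions $P_m-\tr(Z_m)\in C_0([0,\infty))$ therefore converge weakly to $R$, by Riesz--Markov and dominated convergence. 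Mazur's theorem then gives convex combinations of the tails $(P_m)_{m\ge n}$ converging to $R$ uniformly on all of $[0,\infty)$, using $\tr(Z_m)\to0$. These combinations are again finite sums of rational copositive point constraints, and the remaining error is absorbed by raising $z_0$ by $\varepsilon/2$. Rationality comes for free, so no separate perturbation step is needed.
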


Writing $F(\tau)$ for the left-hand side of
\eqref{eq:scalar-cert-radial}, the certificate requires $F(\tau)\ge0$
for all $\tau\ge0$ and $F(0)\ge1$, since $\Omega_d(0)=1$.
The converse identifies $m_D(\R^d)$ with the infimum of $z_0$ over
certificates with rational point sets and copositive matrices.

For the chromatic problem, we use a colour-labelled version of the same
completely-positive approach.  Simplex inner products record whether
two colours agree.  Translation and rotation averaging gives radial
functions, with labelled Boolean-quadratic inequalities providing
finite-configuration constraints.
The matrix indices now record both a point and a colour.
For $k\ge2$, finite $U\subseteq\R^d$, and a symmetric matrix $Z$ indexed by
$U\times[k]$, write $Z_{ui,vj}:=Z((u,i),(v,j))$ and define
\begin{equation}\label{eq:AZ}
 A_Z:=\frac1{k^2}\sum_{u,v\in U}\sum_{i,j=1}^k Z_{ui,vj},
\end{equation}
\begin{equation}\label{eq:SZ}
 S_Z(u,v):=\frac1{k^2}\sum_{i,j=1}^k
                 Z_{ui,vj}(k\delta_{ij}-1).
\end{equation}
$A_Z$ records the constant contribution from averaging colour labels,
and the symmetric matrix $S_Z$ supplies the pair coefficients.
The finite-configuration inequality comes from the Boolean-quadratic
condition on $Z$; no positivity condition is needed on $S_Z$ itself.

\begin{theorem}\label{prop:chromatic-certificate}
Let $D=\{d_1,\ldots,d_t\}\subseteq(0,\infty)$ be finite and nonempty, let
$d\ge2$, and let $k\ge2$.  Suppose finite nonempty sets
$U_1,\ldots,U_\ell\subseteq\R^d$, matrices
$Z_q\in\BQC^*(U_q\times[k])$, and real numbers $z,a_1,\ldots,a_t$ satisfy
\begin{align}
z-\frac1{k-1}\sum_{s=1}^ta_s+\sum_{q=1}^\ell A_{Z_q}&<0,
 \label{eq:chromatic-cert-gap}\\
 z-\sum_{q=1}^\ell\tr (S_{Z_q})+\sum_{s=1}^ta_s\Omega_d(d_s\tau)
 -\sum_{q=1}^\ell
 \sum_{\substack{u,v\in U_q\\u\ne v}}S_{Z_q}(u,v)\Omega_d(\|u-v\|\tau)
 &\ge0
 \qquad(\tau\ge0).\label{eq:chromatic-cert-radial}
\end{align}
Then
\[
\chi_D^{(m)}(\R^d)>k.
\]
Conversely, if $\chi_D^{(m)}(\R^d)>k$, there is such a
certificate in which $U_q\subseteq\Q^d$ and
$Z_q\in\COP(U_q\times[k])\subseteq\BQC^*(U_q\times[k])$.
\end{theorem}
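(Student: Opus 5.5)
The forward implication will be proved by contradiction, by integrating the certificate against a spectral measure built from a hypothetical colouring. Suppose $C_1,\ldots,C_k$ are measurable colour classes partitioning $\R^d$, each avoiding the distances in $D$.

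\textbf{Step 1: pair correlations.} Fix a Banach limit $\mathcal L$ of averages over the cubes $[-n,n]^d$. Put
\[
G_{ij}(y):=\mathcal L\bigl(\mathbf1_{C_i}(x)\mathbf1_{C_j}(x+y)\bigr),
\qquad P(y):=\sum_i G_{ii}(y).
\]
Then $\sum_{i,j}G_{ij}\equiv1$ and $G_{ij}(0)=0$ for $i\ne j$. Define
\[
\varphi:=\frac{kP-1}{k-1}.
\]
It satisfies $\varphi(0)=1$ and $\varphi(y)=-1/(k-1)$ whenever $\|y\|\in D$, since then $P(y)=0$. It is positive definite, because $kP-1$ is the correlation of the vector map $x\mapsto f(x)$ onto the vertices of a regular simplex, which has inner products $k\delta_{ij}-1$.

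\textbf{Step 2: colour symmetrization and the finite constraints.} Fix $q$, a translation $x$ and a rotation $\rho$. The vector $b_{ui}=\mathbf1_{C_i}(x+\rho u)$ is Boolean, so $b^TZ_qb\ge0$. Apply this to every relabelled colouring $C_{\pi(1)},\ldots,C_{\pi(k)}$ and average over $\pi\in S_k$ and over $x$ (using $\mathcal L$). A direct computation gives
\[
\frac1{k!}\sum_\pi G_{\pi i\pi j}(y)=\frac1{k^2}\bigl(1+(k\delta_{ij}-1)\varphi(y)\bigr),
\]
so the constraint becomes
\[
A_{Z_q}+\sum_{u,v\in U_q}S_{Z_q}(u,v)\,\varphi(\rho(u-v))\ge0 .
\]
Averaging over $\rho\in SO(d)$ replaces $\varphi$ by its radialization $\bar\varphi$, which is still positive definite. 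The values $\bar\varphi(0)=1$ and $\bar\varphi=-1/(k-1)$ on spheres of radius $d_s$ are unchanged.

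\textbf{Step 3: Schoenberg and integration.} By Schoenberg's theorem, write $\bar\varphi(r)=\int_{[0,\infty)}\Omega_d(r\tau)\,d\mu(\tau)$ for a probability measure $\mu$. Integrate \eqref{eq:chromatic-cert-radial} against $\mu$ and sum the constraints of Step 2 over $q$. This yields
\[
z-\frac1{k-1}\sum_s a_s+\sum_q A_{Z_q}\ge0,
\]
contradicting \eqref{eq:chromatic-cert-gap}.

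I expect the main technical point of the forward direction to be Schoenberg's theorem for $\bar\varphi$, which is defined through a Banach limit and is not obviously continuous. I would first smooth by convolving the colouring correlations with a small radial approximate identity. This changes the values at the finitely many radii $d_s$ and $\|u-v\|$ by $o(1)$. Since \eqref{eq:chromatic-cert-gap} is strict and all sums are finite, letting the smoothing parameter tend to zero completes the argument.

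\textbf{Converse.} If $\chi_D^{(m)}(\R^d)>k$, I would argue by separation, following the appendix strategy used for Theorem~\ref{prop:scalar-certificate}. Suppose no certificate exists with rational $U_q$ and copositive $Z_q$. Hahn--Banach separation over the cone generated by such certificates then produces a probability measure $\mu$ on $[0,\infty)$ with the following properties: the function $\psi(r)=\int\Omega_d(r\tau)\,d\mu$ has $\psi(d_s)=-1/(k-1)$, and for every finite rational configuration the labelled matrix $\bigl(k^{-2}(1+(k\delta_{ij}-1)\psi(\|u-v\|))\bigr)_{ui,vj}$ is completely positive. Here compactness of the space of measures, together with $\Omega_d(t)\to0$, is needed to make the separation work.

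The remaining step is the main obstacle of the whole proof: turning this infinite family of completely-positive conditions into an actual measurable $k$-colouring, or at least into a contradiction with $\chi^{(m)}_D(\R^d)>k$. My first attempt would be the completely-positive extraction of DeCorte--de Oliveira Filho--Vallentin on large tori $\T^d$ scaled by $N$. There, complete positivity of translation-invariant labelled kernels on a fine rational grid expresses $\psi$ as a mixture of simplex-valued correlations of genuine periodic measurable colourings. Passing to a weak limit as $N\to\infty$ and the grid is refined, approximately $D$-avoiding colourings would then be produced. Finally I would use that measurable $k$-colourability is a closed condition under such limits, via Lebesgue density arguments near the finitely many distances, to obtain a measurable $k$-colouring and the required contradiction.
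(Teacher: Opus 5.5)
Your forward direction has the same plan as the paper's sketch, and the paper does not prove that direction either. The plan is translation- and rotation-averaged simplex correlations, a spectral measure, and the colour-permutation inequalities of Lemma~\ref{lem:simplex-bqc}; your computations in Steps 1--3 are correct. Your smoothing fix, however, does not work. The Banach-limit correlation is not known to be continuous, or even measurable: a measurable colouring may have ever finer structure far from the origin. Claiming that convolution changes its values at the radii $d_s$ by $o(1)$ amounts to assuming continuity there, which is exactly what is in doubt.

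A clean fix is to restrict the colouring to $[0,N)^d$ and repeat it periodically. Lemma~\ref{lem:simplex-bqc} needs no properness. The periodic simplex correlation is continuous and positive definite, with Fourier coefficients $\|\hat g(\xi)\|^2$ summing to $1$. Only an $O(d_s/N)$ proportion of pairs at distance $d_s$ become monochromatic, and this error is absorbed by the strict gap \eqref{eq:chromatic-cert-gap}.

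The converse has a genuine gap, at exactly the step you single out. Your separation argument is roughly dual to the paper's route, with two inaccuracies. First, mass can escape to infinity, so you only obtain a measure of mass at most one, not a probability measure. Second, the constant diagonal terms $\tr(S_Z)$ of point constraints are not weak-* continuous. This is why the paper works with thick constraints in $C_0([0,\infty))$ and replaces them by rational point constraints only at the end.

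The extraction is where your plan breaks down. $\psi$ is not periodic, so there is no translation-invariant kernel on a torus. Completely positive factorizations give nonnegative vectors, not colourings. There is also no available principle that measurable $k$-colourability is closed under weak limits of approximately $D$-avoiding colourings: such limits are only fractional colourings, and for general $D$ no two rational points need be at distance $d_s$.

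The ideas you are missing, all from Lemma~\ref{lem:labelled-cp-extraction} and its supporting lemma, are these:
\begin{enumerate}
\item Every colour block of $C_U$ sums to $1$, so in a factorization $C_U=\sum_a b_ab_a^T$ the weights $w_a(x)=\sum_i b_a(x,i)$ do not depend on $x$. Hence $C_U$ is a mixture of products of maps $p^{(a)}$ into the probability simplex.
\item These mixtures have a weak limit $\nu$ on the compact space of maps from $\Q^d$ to the simplex.
\item Continuity of $h$ at $0$ makes $p(\pi_n(x))$ Cauchy in a weighted $L^2(\nu\otimes dx)$. This gives a jointly measurable $f(p,x)$ that extends the rational data to $\R^d$.
\item The overlap integrals $\Gamma_i^p(v)$ are continuous in $v$. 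Their $\nu$-average is a positive multiple of $1+(k-1)h(\|v\|)$, so it vanishes on every forbidden sphere. For a single good $p$, the density points of $\{f_i(p,\cdot)>0\}$ are then $D$-avoiding and cover a conull set.
\item Lemma~\ref{lem:conull-colouring-completion} (density points together with the de Bruijn--Erd\H{o}s compactness principle) colours the remaining null set measurably.
\end{enumerate}
None of these steps appears in your sketch, so the converse is not established.
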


For each fixed $k$, Theorem~\ref{prop:chromatic-certificate} characterizes
failure of measurable $k$-colourability by a finite certificate.
The strict negative gap in \eqref{eq:chromatic-cert-gap} excludes a
$k$-colouring and allows a small increase of $z$ to absorb the
transference error in Section~\ref{sec:main-theorem}.

Only the converse direction of Theorem~\ref{prop:chromatic-certificate}
is used in this paper.  We include the sufficient direction for
motivation and do not give a proof of it here.
A proof can be obtained by averaging the simplex correlations of a
measurable colouring over translations and rotations, and representing
the resulting radial positive-definite function by a nonnegative
spectral measure, as in
\cite[Section~3.1]{bachocdecorteoliveiravallentin2014}.
Integrating \eqref{eq:chromatic-cert-radial} against this measure and
adding the colour-permutation inequalities of
Lemma~\ref{lem:simplex-bqc} forces the left-hand side of
\eqref{eq:chromatic-cert-gap} to be nonnegative, a contradiction.
Appendix~\ref{app:chromatic-certificate} proves the converse by a
completely-positive extraction with fixed colour-block sums, using the
finite-point reduction from Appendix~\ref{app:scalar-certificate}.

The two certificate inputs leave a common task for the rest of the proof.
Their radial expressions are finite linear combinations of spherical
transforms.
After the common rescaling in Section~\ref{sec:main-theorem}, each has
the form
\[
 F(\tau)=c_\infty+\sum_{w\in W}c_w\Omega_d(\|w\|\tau)\ge0
 \qquad(\tau\ge0),
\]
where $W\subseteq\Z^d\setminus\{0\}$ is finite.
The rational point sets furnished by the converses, together with the
rational squared forbidden distances, make this rescaling possible for
$d\ge4$ by Lagrange's four-square theorem.
We seek a lattice trigonometric polynomial obtained by replacing the
spherical terms with averages of characters
$e^{2\pi i v\cdot\theta}$, while allowing an arbitrarily small increase
of the constant term.

The construction must transport each whole configuration $U_q$ together,
preserving its pairwise distances: its matrix constraint concerns all
those points at once.  Relabelling the matrices along these copies and
averaging them preserves their Boolean-quadratic conditions and the
normalization quantities.  The density bound then changes by an
arbitrarily small amount, and the strict chromatic gap survives if the
error is chosen sufficiently small.

\section{Cayley graphs of $\Z^r$}\label{sec:lattice-lp}

In this section we prove analogues of the (positive halves) of Theorem~\ref{prop:scalar-certificate} and Theorem~\ref{prop:chromatic-certificate} for Cayley graphs of $\Z^r$.
By using suitable embedding of such Cayley graphs in $\R^d$, transferring measurable certificates onto such Cayley graphs will provide certificates in the non-measurable setting.

We begin with the analogue of Theorem~\ref{prop:scalar-certificate} (see Theorem~\ref{thm:lattice-bound}).
The recent resolution of Rosenfeld's odd distance problem \cite{davies2023odd} used the following weakening of Theorem~\ref{thm:lattice-bound}. It was also used in its extension to prime and polynomial distances \cite{dmp2024} as well as in studying the chromatic number of unit distance graphs in pseudo-Euclidean spaces \cite{davies2025spacetime}.

\begin{theorem}[Davies~\cite{davies2023odd}]\label{thm:odd-distances-ratio}
Let $C\subseteq\Z^r\setminus\{0\}$ be finite with $C=-C$.  Suppose $z_0\in\R$ and real coefficients
$a_s=a_{-s}$ $(s\in C)$ satisfy
\begin{align*}
 z_0+\sum_{s\in C}a_s&\ge1,\\
 z_0+\sum_{s\in C}a_se^{2\pi i s\cdot\theta}&\ge0
 \qquad(\theta\in\T^r).
\end{align*}
Then
\[
 \overline\alpha(\Cay(\Z^r,C))\le z_0.
\]
\end{theorem}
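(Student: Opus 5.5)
We prove Theorem~\ref{thm:odd-distances-ratio} by the standard averaging argument for the Delsarte/Hoffman-type linear programming bound, applied to a finite subgraph through an approximate torus average.

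Fix a finite nonempty induced subgraph $H$ of $\Cay(\Z^r,C)$ with vertex set $V\subseteq\Z^r$, and a maximum independent set $I\subseteq V$. We want $|I|\le z_0|V|$. Since $\overline\alpha$ is an infimum over subgraphs, it suffices to exhibit, for every $\varepsilon>0$, a finite subgraph whose independence ratio is at most $z_0+\varepsilon$. We choose these subgraphs to be large boxes $V_N=\{0,\ldots,N-1\}^r$ and show that every independent set $I\subseteq V_N$ satisfies $|I|\le (z_0+o(1))N^r$.

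Let $f=\mathbf1_I$ on $\Z^r$. Consider the pair sum
\[
 Q:=\sum_{x,y\in\Z^r}f(x)f(y)\,g(y-x),
 \qquad g(0)=z_0,\quad g(s)=a_s\ (s\in C),
\]
with $g=0$ elsewhere. The function $g$ is finitely supported and symmetric, and its Fourier transform $\hat g(\theta)=z_0+\sum_{s\in C}a_se^{2\pi i s\cdot\theta}$ is nonnegative on $\T^r$. By Parseval,
\[
 Q=\int_{\T^r}|\hat f(\theta)|^2\hat g(\theta)\,d\theta\ge0 .
\]
Since $I$ is independent, every pair at difference $s\in C$ contributes $0$, so $Q=z_0|I|$. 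Thus positivity alone gives only $z_0|I|\ge0$. To extract a bound we must use the normalization $\hat g(0)\ge1$ by subtracting the mean.

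Replace $f$ by $h=f-\lambda\mathbf1_{V_N}$ with $\lambda=|I|/N^r$. Parseval positivity still applies, giving
\[
 \sum_{x,y}h(x)h(y)g(y-x)\ge0.
\]
Expanding this sum:
\begin{itemize}
 \item the $ff$ term is $z_0|I|$;
 \item each cross term $f\cdot\mathbf1_{V_N}$ equals $\lambda|I|\hat g(0)$ up to boundary error $O(N^{r-1})$, since all points of $I$ lie in $V_N$ and $g$ has finite support;
 \item the $\mathbf1\mathbf1$ term equals $\lambda^2N^r\hat g(0)+O(N^{r-1})$.
\end{itemize}
Putting these together,
\[
 0\le z_0|I|-2\lambda|I|\hat g(0)+\lambda^2N^r\hat g(0)+O(N^{r-1})
 =z_0|I|-\lambda|I|\hat g(0)+O(N^{r-1}).
\]
Dividing by $|I|$ and using $\hat g(0)\ge1$ gives $\lambda\le z_0+O(N^{r-1})/|I|$. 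If $|I|\ge\varepsilon N^r$, this yields $\lambda\le z_0+O(1/N)$. Otherwise $\lambda<\varepsilon$, and the bound holds anyway, since taking $\theta=0$ in the second hypothesis gives $z_0\ge0$ is not what we need; instead we simply note that $\lambda<\varepsilon\le z_0+\varepsilon$ whenever $z_0\ge0$.

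Nonnegativity of $z_0$ follows from integrating the trigonometric inequality over $\T^r$, since every nonzero character has integral zero. Letting $N\to\infty$ gives $\overline\alpha\le z_0$.

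The main obstacle is the bookkeeping of the boundary errors, which must be $o(N^r)$; this works because $C$ is finite. An alternative that avoids boundaries entirely is to pass to the quotient $\Z_N^r$ when $N$ is large, but then one must check that independence survives the quotient, which it does not. So we stick with boxes and accept the $O(N^{r-1})$ error.
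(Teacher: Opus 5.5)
Your proof is correct. Its core is the same as the paper's, which obtains this statement as the case $\ell=0$ of Theorem~\ref{thm:lattice-bound}: test a positive semidefinite translation-invariant kernel, with diagonal value $z_0$ and total mass $z_0+\sum_{s\in C}a_s\ge1$, against the indicator of an independent set with its mean removed.

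The implementations differ.
\begin{itemize}
\item The paper fixes an arbitrary independent $A\subseteq\Z^r$ and reduces $A\cap[-N,N]^r$ into $\Z_n^r$, with $n=2N+1+2\rho$ and $\rho>\max_{s\in C}\|s\|_\infty$. Lov\'asz's eigenvalue formula makes the resulting circulant matrix positive semidefinite. Since $\mathbf1$ is then an exact eigenvector with eigenvalue at least $1$, the paper gets $|A_N|^2/n^r\le z_0|A_N|$ with no error terms. It concludes $\overline\delta(A)\le z_0$ and appeals to the identification of $\overline\alpha(\Cay(\Z^r,C))$ with the supremum of upper densities of independent sets.
\item You stay in $\Z^r$, use Parseval for positivity, and bound $\alpha(V_N)/N^r$ for boxes directly. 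This matches the infimum definition of $\overline\alpha$ and avoids that identification. The cost is $O(N^{r-1})$ boundary terms and the case split on $|I|\ge\varepsilon N^r$, where you need $z_0\ge0$; you obtain this correctly by integrating over $\T^r$.
\end{itemize}

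Two remarks need correcting, though neither affects the argument.
\begin{itemize}
\item Your opening target $|I|\le z_0|V|$ for an arbitrary finite induced subgraph is false in general; a single vertex has ratio $1$. Only the reduction to boxes that you state next is needed.
\item Your closing claim that independence does not survive passage to a torus is wrong once you pad. If the box is placed in $\Z_n^r$ with a gap wider than $\max_{s\in C}\|s\|_\infty$, as in the paper's choice of $n$, no new edges are created. The truncated set then stays independent in $\Cay(\Z_n^r,C)$, and the density loss $((2N+1)/n)^r\to1$ is negligible. This is precisely how the paper avoids your boundary bookkeeping.
\end{itemize}
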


The above is essentially a weighted Hoffman's ratio bound
\cite{hoffman1970eigenvalues,elzingagregory2010weighted} or Lov{\'a}sz theta bound
\cite{lovasz1979shannon} for Cayley graphs of $\Z^r$.
De Klerk and Pasechnik~\cite{deklerkpasechnik2002} gave an exact copositive formulation of the independence number of a finite graph which is where our required strengthening of Theorem~\ref{thm:odd-distances-ratio} comes from.

For $A\subseteq\Z^r$, define its upper density by
\[
 \overline\delta(A):=\limsup_{N\to\infty}
 \frac{|A\cap[-N,N]^r|}{(2N+1)^r}.
\]
For finite $C$, the parameter $\overline\alpha(\Cay(\Z^r,C))$ equals
the supremum of these upper densities over independent sets.
Translation averaging gives one inequality.  For the other, repeat
maximum independent sets from large boxes, leaving fixed gaps wider
than $\|s\|_\infty$ for every $s\in C$ to prevent edges between copies.
The gaps have negligible density as the boxes grow.

As in \cite{davies2023odd}, we will prove the result for finite Cayley graphs of $\Z^r_n$ and obtain the result for $\Z^r$ by taking a limit.
For this we require the following lemma providing the eigenvalues of weighted adjacency matrices of such Cayley graphs.

\begin{lemma}[Lov{\'a}sz~\cite{lovasz1975spectra}]\label{eigenvalues}
Let $w:\Z_n^r\to\R$ be centrally symmetric and let $B$ be the symmetric matrix
with $B_{u,v}=w(v-u)$.
Then the eigenvalues of $B$ are
\[
 \lambda_z:=\sum_{x\in\Z_n^r}w(x)e^{2\pi i z\cdot x/n}
 \qquad(z\in\Z_n^r).
\]
\end{lemma}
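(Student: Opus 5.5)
The plan is to diagonalize $B$ directly using the characters of the finite abelian group $\Z_n^r$. For each $z\in\Z_n^r$ define the vector $\chi_z\in\mathbb{C}^{\Z_n^r}$ by $\chi_z(u):=e^{2\pi i z\cdot u/n}$. This is well defined because $z\cdot u$ is determined modulo $n$ when $z,u$ are taken modulo $n$. I will show that each $\chi_z$ is an eigenvector of $B$ with eigenvalue $\lambda_z$, and that the $n^r$ vectors $\chi_z$ form a basis. Together these give the full spectrum of $B$, counted with multiplicity.

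For the eigenvector computation, fix $u$ and substitute $x=v-u$, which is a bijection of $\Z_n^r$:
\[
 (B\chi_z)(u)=\sum_{v}w(v-u)e^{2\pi i z\cdot v/n}
 =e^{2\pi i z\cdot u/n}\sum_{x}w(x)e^{2\pi i z\cdot x/n}
 =\lambda_z\chi_z(u).
\]
Central symmetry $w(x)=w(-x)$ makes $\lambda_z$ real, since pairing $x$ with $-x$ turns the sum into $\sum_x w(x)\cos(2\pi z\cdot x/n)$. The same symmetry makes $B$ symmetric, as the statement asserts, because $B_{v,u}=w(u-v)=w(v-u)=B_{u,v}$.

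For the basis property, I will use orthogonality of characters. We have $\langle\chi_z,\chi_{z'}\rangle=\sum_u e^{2\pi i (z-z')\cdot u/n}$. This sum factors over the $r$ coordinates into geometric sums of $n$th roots of unity. It equals $n^r$ when $z=z'$ and $0$ otherwise. So the $n^r$ vectors are orthogonal and nonzero, hence linearly independent, and they span $\mathbb{C}^{n^r}$. Therefore $B$ is diagonalized by this basis and its multiset of eigenvalues is exactly $\{\lambda_z:z\in\Z_n^r\}$.

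No step is a genuine obstacle. The only point needing care is the well-definedness of the exponent modulo $n$ and the orthogonality computation, which are both routine.
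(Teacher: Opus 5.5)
Your proof is correct: the $n^r$ mutually orthogonal characters $\chi_z(u)=e^{2\pi i z\cdot u/n}$ are eigenvectors of $B$ with eigenvalues $\lambda_z$, so they determine the whole spectrum, with multiplicity. The paper does not prove this lemma itself; it only cites Lov\'asz, whose result for general groups reduces in the abelian case to exactly this character diagonalization, so your argument is the standard one.
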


We are now ready to prove the following extension of Theorem~\ref{thm:odd-distances-ratio} (note that taking $\ell=0$ in Theorem~\ref{thm:lattice-bound} recovers
Theorem~\ref{thm:odd-distances-ratio}), which is the aforementioned analogue of Theorem~\ref{prop:scalar-certificate}.

\begin{theorem}\label{thm:lattice-bound}
Let $C\subseteq\Z^r\setminus\{0\}$ be finite with $C=-C$.  For $1\le q\le\ell$, let $U_q\subseteq\Z^r$ be finite and
nonempty and let $Z_q\in\BQC^*(U_q)$.  Suppose $z_0\in\R$ and real
coefficients $a_s=a_{-s}$ $(s\in C)$ satisfy
\begin{align}
 z_0+\sum_{s\in C}a_s-
 \sum_{q=1}^\ell\langle Z_q,J_{U_q}\rangle&\ge1,
 \label{eq:lattice-normalization}\\
 z_0-\sum_{q=1}^\ell\tr (Z_q)+\sum_{s\in C}a_se^{2\pi i s\cdot\theta}
 -\sum_{q=1}^\ell\sum_{\substack{u,v\in U_q\\u\ne v}}Z_q(u,v)
       e^{2\pi i(v-u)\cdot\theta}&\ge0 \qquad(\theta\in\T^r).
 \label{eq:lattice-spectral}
\end{align}
Then
\[
 \overline\alpha(\Cay(\Z^r,C))\le z_0.
\]
\end{theorem}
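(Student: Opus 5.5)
The plan is to follow the finite-torus strategy of \cite{davies2023odd}: prove the bound for $\Cay(\Z_n^r,C)$ with $n$ large, and then pass to $\Z^r$ through upper densities.

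\textbf{Reduction to tori.} Since $C$ and all differences $v-u$ with $u,v\in U_q$ are bounded, for $n$ larger than twice their $\ell_\infty$-norms we may regard them as elements of $\Z_n^r$. Then $\Cay(\Z_n^r,C)$ has the same local structure as $\Cay(\Z^r,C)$. Given an independent set $A\subseteq\Z^r$ of upper density $\delta$, we use the reformulation of $\overline\alpha$ as a supremum of upper densities. We pick a box $[-N,N]^r$ in which $A$ has density close to $\delta$, and take $n=2N+1+M$ with a gap $M$ exceeding $\max_{s\in C}\|s\|_\infty$. Then $A\cap[-N,N]^r$ projects to an independent set of $\Cay(\Z_n^r,C)$ of density at least $\delta-o(1)$. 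It therefore suffices to show that every independent set $I\subseteq\Z_n^r$ has $|I|\le z_0 n^r$.

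\textbf{The finite inequality.} Let $x=\mathbf 1_I\in\{0,1\}^{\Z_n^r}$ and $m=|I|$. We build the symmetric matrix
\[
B=\Bigl(z_0-\sum_q\tr(Z_q)\Bigr)I+\sum_{s\in C}a_s P_s-\sum_q\sum_{u\ne v\in U_q}Z_q(u,v)P_{v-u},
\]
where $P_w$ is the permutation matrix $(P_w)_{y,y'}=\mathbf 1_{y'-y=w}$. We group terms so that the weight $w(\cdot)$ is centrally symmetric, which is possible because $Z_q$ is symmetric and $a_s=a_{-s}$. The eigenvalues of $B$ are exactly the values of the trigonometric polynomial in \eqref{eq:lattice-spectral} at $\theta=z/n$, by Lemma~\ref{eigenvalues}. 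Hence $B\succeq0$ and $x^TBx\ge0$.

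We then compute $x^TBx$ directly.
\begin{itemize}
\item The diagonal part gives $(z_0-\sum_q\tr Z_q)m$.
\item The $a_s$ terms vanish, since $I$ is independent and $x_yx_{y+s}=0$ for $s\in C$.
\item The $Z_q$ terms give $-\sum_q\sum_{u\ne v}Z_q(u,v)\,|\{y:y+u,y+v\in I\}|$.
\end{itemize}
So $0\le x^TBx=z_0m-\sum_q\sum_{y\in\Z_n^r}(x^{(y)})^TZ_qx^{(y)}$, where $x^{(y)}=(x_{y+u})_{u\in U_q}\in\{0,1\}^{U_q}$. To see this, expand $(x^{(y)})^TZ_qx^{(y)}$; the diagonal terms sum over $y$ to $\tr(Z_q)\,m$. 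Thus $z_0m\ge\sum_q\sum_y (x^{(y)})^TZ_qx^{(y)}$, and each summand is $\ge0$ by $Z_q\in\BQC^*(U_q)$.

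\textbf{Main obstacle.} The step above only shows that the $Z_q$ contributions are nonnegative, which is not enough by itself. We must also bring in the normalization \eqref{eq:lattice-normalization}, which carries the extra term $\langle Z_q,J\rangle$. As in the Hoffman/Lov\'asz argument, the remedy is to test $B$ against $x-\lambda\mathbf 1$ rather than $x$, or equivalently to use a second quadratic form. We write $B=B_0+cJ$, with $c$ chosen so that the constant character is separated out. The all-ones vector is the $z=0$ eigenvector, with eigenvalue $\lambda_0=z_0-\sum_q\langle Z_q,J\rangle+\sum_s a_s\ge1$.

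Concretely, we consider $B'=B-\frac{1}{n^r}J$. On $\mathbf 1^\perp$ it agrees with $B$, and on $\mathbf 1$ it has eigenvalue $\lambda_0-1\ge0$, so $B'\succeq0$. Evaluating, $0\le x^TB'x\le z_0m-m^2/n^r$, using the nonnegativity of the Boolean terms as above. This gives $m\le z_0n^r$, as required. The main check is that the contributions of $\langle Z_q,J\rangle$ are consistent with this computation: the term $\lambda_0$ involves $\sum_{u,v}Z_q(u,v)$ including the diagonal, which matches the trace correction placed in the diagonal part of $B$. Letting $n\to\infty$ then finishes the proof.
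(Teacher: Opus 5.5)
Your proposal is correct and follows essentially the same route as the paper. You reduce to the torus $\Z_n^r$ with a gap so that $A\cap[-N,N]^r$ stays independent, and you build the circulant matrix whose eigenvalues are the values of \eqref{eq:lattice-spectral} at $\theta=z/n$ (Lemma~\ref{eigenvalues}). The row sum $R\ge1$ then gives $x^TBx\ge m^2/n^r$, and the $\BQC^*$ condition applied to the translated Boolean vectors gives $x^TBx\le z_0m$. The only cosmetic difference is that you get the lower bound from $B-\tfrac{1}{n^r}J\succeq 0$, whereas the paper splits $\mathbf 1_{A_N}=(|A_N|/n^r)\mathbf 1+w$ with $w\perp\mathbf 1$; these are equivalent.
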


\begin{proof}
Fix an independent set $A\subseteq\Z^r$ and let
$\delta:=\overline\delta(A)$.  It is enough to show that $\delta\le z_0$.  Choose an integer
\[
 \rho>\max\{\|x\|_\infty:x\in C\cup U_1\cup\cdots\cup U_\ell\}.
\]
For each $N$, put $n=2N+1+2\rho$ and let $A_N=A\cap[-N,N]^r$.
The choice of $\rho$
ensures that $A_N$ is independent in
$\Cay(\Z_n^r,C)$.
Since $\lim_{N\to \infty} n/(2N+1) =1$,
it is therefore
enough to prove that
\[
 \frac{|A_N|}{n^r}\le z_0.
\]

For $x\in\Z_n^r$, let
\[
 K(x)=z_0\mathbf1_{x=0}+\sum_{s\in C}a_s\mathbf1_{x=s}
 -\sum_{q=1}^\ell\sum_{u,v\in U_q}Z_q(u,v)\mathbf1_{x=v-u},
\]
and let $B$ be the matrix indexed by $\Z_n^r$ with
$B_{u,v}=K(v-u)$. Note that $B$ is symmetric since $a_s=a_{-s}$ for each $s\in C$ and each $Z_q$ is symmetric.  By Lemma~\ref{eigenvalues}, the eigenvalues of $B$ are
\[
 z_0+\sum_{s\in C}a_se^{2\pi i s\cdot w/n}
 -\sum_{q=1}^\ell\sum_{u,v\in U_q}Z_q(u,v)
       e^{2\pi i(v-u)\cdot w/n},
 \qquad w\in\Z_n^r.
\]
Separating the diagonal terms, these eigenvalues are nonnegative by \eqref{eq:lattice-spectral}, and hence $B$ is positive semidefinite.

The sum of every row of $B$ is 
\[
 R=z_0+\sum_{s\in C}a_s-
 \sum_{q=1}^\ell\langle Z_q,J_{U_q}\rangle\ge1.
\]
Here \eqref{eq:lattice-normalization} gives $R\ge1$.
Let $h=\mathbf1_{A_N}$.
Writing
$h=(|A_N|/n^r)\mathbf1+w$ with $w$ orthogonal to $\mathbf1$ gives
\begin{align}
    h^TBh=\frac{|A_N|^2}{n^r}R+w^TBw\ge\frac{|A_N|^2}{n^r}. \label{eq:lattice1}
\end{align}

We now evaluate $h^TBh = \sum_{x,y\in A_N} K(y-x)$ directly.  The diagonal term contributes
$z_0|A_N|$, while the generator terms vanish because $A_N$ is independent. 
For each \(q\), set
\[
S_q:=
\sum_{x,y\in A_N}\sum_{u,v\in U_q}
Z_q(u,v)\,\mathbf1_{\{y-x=v-u\}}.
\]
Then,
\[
h^TBh = z_0|A_N| - \sum_{q=1}^\ell S_q.
\]

For
fixed $q$ and $t\in\Z_n^r$, put
\[
 f_t(u)=\mathbf1_{A_N}(t+u)\qquad(u\in U_q).
\]
Since $Z_q\in\BQC^*(U_q)$, $f_t^TZ_qf_t\ge 0$.
Summing over \(t\) and using \(x=t+u\), \(y=t+v\), we get
\[
S_q = \sum_{t\in \Z^r_n} f_t^TZ_qf_t\ge 0.
\]
Hence 
\begin{align}
h^TBh \le z_0|A_N|.     \label{eq:lattice2}
\end{align}

It now follows from \eqref{eq:lattice1} and \eqref{eq:lattice2} that $|A_N|/n^r\le z_0$, as desired.
\end{proof}

We now turn to the chromatic analogue.
Bilu~\cite{bilu2006tales} showed that Hoffman's spectral bound extends to the vector chromatic number.
Fix $k\ge2$ and unit vectors $\xi_1,\ldots,\xi_k\in\R^{k-1}$ forming a regular simplex, so
\begin{equation}\label{eq:simplex-inner-products}
 \xi_i\cdot\xi_j=\frac{k\delta_{ij}-1}{k-1}.
\end{equation}
A proper $k$-colouring may therefore be represented by assigning these vectors to the colours, so that distinct colours have inner product $-1/(k-1)$.
The additional Boolean-quadratic constraints are handled by the following elementary inequality.

\begin{lemma}\label{lem:simplex-bqc}
Let $k\ge2$, let $U$ be finite, let $Z\in\BQC^*(U\times[k])$, and let
$c:U\to[k]$ be arbitrary.  Then
\[
 A_Z+\sum_{u,v\in U}S_Z(u,v)
 \xi_{c(u)}\cdot\xi_{c(v)}\ge0.
\]
\end{lemma}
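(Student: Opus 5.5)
The plan is to exhibit a single Boolean vector $x\in\{0,1\}^{U\times[k]}$, namely the colour-class indicator of $c$, and check that $x^TZx$ is a positive multiple of the left-hand side. The only obstacle is that the naive choice $x_{ui}=\mathbf1_{c(u)=i}$ gives $\sum_{u,v}Z_{uc(u),vc(v)}$, which in general is not the expression in the statement: $S_Z$ and $A_Z$ average over all colour pairs $(i,j)$. So we first symmetrize over colour permutations.

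For a permutation $\pi$ of $[k]$, let $x^\pi_{ui}:=\mathbf1_{\pi(c(u))=i}$. This vector is Boolean, so $(x^\pi)^TZx^\pi\ge0$. Expanding,
\[
 (x^\pi)^TZx^\pi=\sum_{u,v\in U}Z_{u\pi(c(u)),\,v\pi(c(v))}.
\]
Average over all $k!$ permutations $\pi$. For fixed $u,v$, consider the pair $(\pi(c(u)),\pi(c(v)))$ under uniform $\pi$. If $c(u)=c(v)$, it is uniform on the diagonal pairs $(i,i)$, each with probability $1/k$. If $c(u)\ne c(v)$, it is uniform on the off-diagonal pairs $(i,j)$ with $i\ne j$, each with probability $1/(k(k-1))$. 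Hence the average equals
\[
 \sum_{u,v}\Bigl(\mathbf1_{c(u)=c(v)}\frac1k\sum_i Z_{ui,vi}
 +\mathbf1_{c(u)\ne c(v)}\frac1{k(k-1)}\sum_{i\ne j}Z_{ui,vj}\Bigr)\ge0 .
\]

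Next we check that this equals $\frac{k}{k-1}$ times the claimed expression, $A_Z+\sum_{u,v}S_Z(u,v)\,\xi_{c(u)}\cdot\xi_{c(v)}$. Write $P_{uv}:=\sum_iZ_{ui,vi}$ and $Q_{uv}:=\sum_{i\ne j}Z_{ui,vj}$. Then $k^2S_Z(u,v)=(k-1)P_{uv}-Q_{uv}$, and $k^2A_Z=\sum_{u,v}(P_{uv}+Q_{uv})$. Using \eqref{eq:simplex-inner-products}, we split into two cases.

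When $c(u)=c(v)$, the inner product is $\xi_{c(u)}\cdot\xi_{c(v)}=1$. The $(u,v)$ contribution to $k^2$ times the claimed expression is
\[
 P+Q+(k-1)P-Q=kP ,
\]
and this is $k^2$ times $P/k$, matching the averaged term after scaling by $k/(k-1)$ appropriately.

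When $c(u)\ne c(v)$, the inner product is $-1/(k-1)$. The contribution is
\[
 P+Q-\frac{(k-1)P-Q}{k-1}=Q\Bigl(1+\frac1{k-1}\Bigr)=\frac{kQ}{k-1}.
\]

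So $k^2$ times the claimed expression equals $\sum_{u,v}\bigl(\mathbf1_{c(u)=c(v)}kP+\mathbf1_{c(u)\ne c(v)}\tfrac{k}{k-1}Q\bigr)$. This is exactly $k^2$ times the permutation average above, so the two quantities coincide (up to the factor bookkeeping, which I would verify carefully). Since the average is a mean of nonnegative quantities, it is nonnegative, and the lemma follows.

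The main point to get right is the symmetrization: the single-colouring Boolean test does not suffice, and the identity in the last step depends on the exact normalizations in \eqref{eq:AZ} and \eqref{eq:SZ}.
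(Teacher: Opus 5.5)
Your proof is correct and matches the paper's: average the Boolean tests $f_\pi(u,i)=\mathbf1_{i=\pi(c(u))}$ over all colour permutations $\pi$, then identify the average with the displayed expression using \eqref{eq:simplex-inner-products} and the definitions of $A_Z$ and $S_Z$. Your remark about a factor $k/(k-1)$ is wrong, since your own case computation shows the permutation average equals the expression exactly. This does not affect the conclusion, because any positive factor would preserve the sign.
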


\begin{proof}
For a permutation $\pi$ of $[k]$, put
$f_\pi(u,i)=\mathbf1_{i=\pi(c(u))}$.  Since $f_\pi\in\{0,1\}^{U\times[k]}$, the Boolean-quadratic condition
gives $f_\pi^TZf_\pi\ge0$.  Averaging over $\pi$ and using
\eqref{eq:simplex-inner-products}, together with
\eqref{eq:AZ}--\eqref{eq:SZ}, gives exactly the displayed inequality.
\end{proof}

We combine this inequality with the vector-colouring viewpoint and the approach of Theorem~\ref{thm:lattice-bound}.

\begin{theorem}\label{thm:lattice-colouring}
Let $C\subseteq\Z^r\setminus\{0\}$ be finite with $C=-C$.  Let $k\ge2$.  For $1\le q\le\ell$, let
$U_q\subseteq\Z^r$ be finite and nonempty and let
$Z_q\in\BQC^*(U_q\times[k])$.  Suppose $z\in\R$ and real coefficients
$a_s=a_{-s}$ $(s\in C)$ satisfy
\begin{align}
 z-\frac1{k-1}\sum_{s\in C}a_s+\sum_{q=1}^\ell A_{Z_q}&<0,
 \label{eq:lattice-colouring-gap}\\
 z-\sum_{q=1}^\ell\tr (S_{Z_q})+\sum_{s\in C}a_se^{2\pi i s\cdot\theta}
 -\sum_{q=1}^\ell\sum_{\substack{u,v\in U_q\\u\ne v}}S_{Z_q}(u,v)
       e^{2\pi i(v-u)\cdot\theta}&\ge0 \qquad(\theta\in\T^r).
 \label{eq:lattice-colouring-spectral}
\end{align}
Then
\[
\chi(\Cay(\Z^r,C)) > k.
\]
\end{theorem}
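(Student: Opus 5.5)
We argue by contradiction, mirroring the proof of Theorem~\ref{thm:lattice-bound}, but now with vector-valued test functions from the simplex representation. Suppose $c:\Z^r\to[k]$ is a proper $k$-colouring of $\Cay(\Z^r,C)$. First we reduce to a finite torus. Choose $\rho$ larger than $\|x\|_\infty$ for all $x\in C\cup U_1\cup\cdots\cup U_\ell$. For large $N$ set $n=2N+1+2\rho$ and restrict $c$ to the box $[-N,N]^r\subseteq\Z_n^r$. The remaining points of $\Z_n^r$ (a boundary layer of density $O(\rho/N)$) are given the zero vector rather than a simplex vector. Define $h:\Z_n^r\to\R^{k-1}$ by $h(x)=\xi_{c(x)}$ on the box and $h(x)=0$ elsewhere. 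By the choice of $\rho$, whenever $y-x\in C$ in $\Z_n^r$ with both $x,y$ in the box, $x,y$ are genuinely adjacent in $\Z^r$, so $h(x)\cdot h(y)=-1/(k-1)$.

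Next we build the circulant. Let
\[
K(x)=z\mathbf1_{x=0}+\sum_{s\in C}a_s\mathbf1_{x=s}-\sum_{q=1}^\ell\sum_{u,v\in U_q}S_{Z_q}(u,v)\mathbf1_{x=v-u},
\]
and let $B_{u,v}=K(v-u)$. This matrix is symmetric because $a_s=a_{-s}$ and $S_{Z_q}$ is symmetric. By Lemma~\ref{eigenvalues} its eigenvalues are the values of the left-hand side of \eqref{eq:lattice-colouring-spectral} at $\theta=w/n$, after separating diagonal terms. Hence $B\succeq0$, and therefore
\[
\sum_{x,y}K(y-x)\,h(x)\cdot h(y)\ge0
\]
(apply positivity coordinatewise in $\R^{k-1}$).

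Now we expand this sum. Let $M\approx n^r$ be the number of box points. The diagonal contributes $zM$. The generator terms contribute at most $-\frac1{k-1}\sum_s a_s M$ up to boundary errors of size $O(\rho n^{r-1})$. For the configuration terms we proceed as before and write
\[
\sum_{x,y}\sum_{u,v}S_{Z_q}(u,v)\mathbf1_{y-x=v-u}\,h(x)\cdot h(y)=\sum_t\sum_{u,v\in U_q}S_{Z_q}(u,v)\,h(t+u)\cdot h(t+v).
\]
For every $t$ whose translate $t+U_q$ lies in the box, Lemma~\ref{lem:simplex-bqc} applied to $u\mapsto c(t+u)$ bounds the inner sum below by $-A_{Z_q}$. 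The remaining $t$ number $O(\rho n^{r-1})$, and each contributes a bounded amount. Combining these bounds gives
\[
0\le M\Bigl(z-\tfrac1{k-1}\sum_s a_s+\sum_q A_{Z_q}\Bigr)+O(\rho n^{r-1}).
\]
Dividing by $n^r$ and letting $N\to\infty$ contradicts the strict inequality \eqref{eq:lattice-colouring-gap}.

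The main obstacle is the careful handling of signs and boundary terms. The signs of the $a_s$ are arbitrary, so the generator contribution must be bounded by an error term rather than a sign argument. Likewise, translates meeting the zero-padded region are not covered by Lemma~\ref{lem:simplex-bqc}. Both issues are resolved only because their total weight is $O(n^{r-1})$, which is precisely why the strict gap is needed.
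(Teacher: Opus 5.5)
Your proof is correct and follows essentially the same route as the paper: assume a proper colouring, pass to $\Z_n^r$ with $n=2N+1+2\rho$, make the circulant $B$ positive semidefinite via Lemma~\ref{eigenvalues}, expand $\sum_{x,y}K(y-x)\,h(x)\cdot h(y)$ using simplex vectors, and apply Lemma~\ref{lem:simplex-bqc} to translates with an $O(n^{r-1})$ boundary error. The only difference is cosmetic. The paper extends $c$ arbitrarily to all of $\Z_n^r$, so Lemma~\ref{lem:simplex-bqc} applies to every translate and the boundary error sits in the generator terms; your zero-padding moves that error into the configuration terms instead. You also skip the paper's row-sum decomposition, which is not needed here, since $B\succeq0$ alone already gives nonnegativity.
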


\begin{proof}
Suppose that $c:\Z^r\to[k]$ is a proper colouring of $\Cay(\Z^r,C)$. Choose an integer
\[
 \rho>\max\{\|x\|_\infty:x\in C\cup U_1\cup\cdots\cup U_\ell\}.
\]
For each $N$, put $n=2N+1+2\rho$ and let
$c_N:\Z_n^r\to[k]$ agree with $c$ on $[-N,N]^r$, extending it
arbitrarily to the remaining vertices of $\Z_n^r$. The choice of $\rho$
ensures that $C$ and the $U_q$ may be identified with their reductions modulo $n$.
For each $s\in C$ all but $O(n^{r-1})$ values of $x\in\Z_n^r$ satisfy
\[
 c_N(x)\ne c_N(x+s).
\]

For $x\in\Z_n^r$, let
\[
 K(x)=z\mathbf1_{x=0}+\sum_{s\in C}a_s\mathbf1_{x=s}
 -\sum_{q=1}^\ell\sum_{u,v\in U_q}
 S_{Z_q}(u,v)\mathbf1_{x=v-u},
\]
and let $B$ be the matrix indexed by $\Z_n^r$ with
$B_{u,v}=K(v-u)$. Note that $B$ is symmetric since $a_s=a_{-s}$
for each $s\in C$ and each $S_{Z_q}$ is symmetric. By
Lemma~\ref{eigenvalues}, the eigenvalues of $B$ are
\[
 z+\sum_{s\in C}a_se^{2\pi i s\cdot w/n}
 -\sum_{q=1}^\ell\sum_{u,v\in U_q}S_{Z_q}(u,v)
 e^{2\pi i(v-u)\cdot w/n},
 \qquad w\in\Z_n^r.
\]
Separating the diagonal terms, these eigenvalues are nonnegative by
\eqref{eq:lattice-colouring-spectral}, and hence $B$ is positive
semidefinite.

The sum of every row of $B$ is
\[
 R=z+\sum_{s\in C}a_s
 -\sum_{q=1}^\ell\sum_{u,v\in U_q}S_{Z_q}(u,v)\ge0.
\]
Taking $\theta=0$ in \eqref{eq:lattice-colouring-spectral} gives $R\ge0$.
Let $X$ be the $n^r\times(k-1)$ matrix whose row indexed by
$x\in\Z_n^r$ is $\xi_{c_N(x)}^T$, and put
\[
 \overline\xi=\frac1{n^r}\sum_{x\in\Z_n^r}\xi_{c_N(x)}.
\]
Writing $X=\mathbf1\overline\xi^T+W$ with $W^T\mathbf1=0$ gives
\begin{align}
 \tr(X^TBX)
 =n^rR\|\overline\xi\|^2+\tr(W^TBW)\ge0.
 \label{eq:lattice-colouring1}
\end{align}

We now evaluate
\[
 \tr(X^TBX)
 =\sum_{x,y\in\Z_n^r}K(y-x)
 \xi_{c_N(x)}\cdot\xi_{c_N(y)}
\]
directly. The diagonal term contributes $zn^r$. For each $s\in C$,
properness of $c$ and \eqref{eq:simplex-inner-products} give
\[
 \xi_{c_N(x)}\cdot\xi_{c_N(x+s)}=-\frac1{k-1}
\]
for all but $O(n^{r-1})$ values of $x$. Since all simplex inner
products have absolute value at most $1$,
\[
 \sum_{x\in\Z_n^r}
 \xi_{c_N(x)}\cdot\xi_{c_N(x+s)}
 =-\frac{n^r}{k-1}+O(n^{r-1}).
\]
For each $q$, set
\[
 S_q:=
 \sum_{x,y\in\Z_n^r}\sum_{u,v\in U_q}
 S_{Z_q}(u,v)\mathbf1_{\{y-x=v-u\}}
 \xi_{c_N(x)}\cdot\xi_{c_N(y)}.
\]
Then
\[
 \tr(X^TBX)
 =zn^r-\frac{n^r}{k-1}\sum_{s\in C}a_s
 -\sum_{q=1}^\ell S_q+O(n^{r-1}).
\]

For fixed $q$ and $t\in\Z_n^r$, put
\[
 c_t(u)=c_N(t+u)\qquad(u\in U_q).
\]
By Lemma~\ref{lem:simplex-bqc},
\[
 A_{Z_q}+\sum_{u,v\in U_q}S_{Z_q}(u,v)
 \xi_{c_t(u)}\cdot\xi_{c_t(v)}\ge0.
\]
Summing over $t$ and using $x=t+u$, $y=t+v$, we get
\[
 n^rA_{Z_q}+S_q
 =
 \sum_{t\in\Z_n^r}
 \left(
 A_{Z_q}+\sum_{u,v\in U_q}S_{Z_q}(u,v)
 \xi_{c_t(u)}\cdot\xi_{c_t(v)}
 \right)\ge0.
\]
Hence
\begin{align}
 \tr(X^TBX)
 \le n^r\left(
 z-\frac1{k-1}\sum_{s\in C}a_s
 +\sum_{q=1}^\ell A_{Z_q}
 \right)+O(n^{r-1}).
 \label{eq:lattice-colouring2}
\end{align}

It now follows from \eqref{eq:lattice-colouring1} and
\eqref{eq:lattice-colouring2} that
\[
 0\le n^r\left(
 z-\frac1{k-1}\sum_{s\in C}a_s+\sum_{q=1}^\ell A_{Z_q}
 \right)+O(n^{r-1}),
\]
contradicting \eqref{eq:lattice-colouring-gap} for all sufficiently
large $N$.
\end{proof}

\section{Exponential sum estimates on 3-spheres}\label{sec:lattice-sphere-estimates}

In this section we gather the 4-dimensional exponential sum estimates we require for points of $\Z^4$ on a $3$-sphere.
We require decay when bounded away from bounded denominator rational points and approximation by $\Omega_4$ near $0$.

Whenever $p$ appears in this section, it denotes an odd prime, and limits as
$p\to\infty$ are taken through the odd primes.  Put
\[
 X_p:=\{x\in\Z^4:\|x\|^2=p^2\}.
\]
Jacobi's four-square formula~\cite{grosswald1985sumsquares} gives $|X_p|=8(1+p+p^2)$.  Define
\[
 M_p(\theta):=\frac1{|X_p|}\sum_{x\in X_p}e^{2\pi i x\cdot\theta},
 \qquad \theta\in\T^4.
\]

For integers $r,q\ge1$, write
\[
 \T^r[q]:=q^{-1}\Z^r/\Z^r\subseteq\T^r
\]
for the $q$-torsion subgroup.  For $r,Q\ge1$ let
\[
 \mathfrak R_Q^r:=\bigcup_{1\le q\le Q}\T^r[q]\subseteq\T^r.
\]
Thus $\mathfrak R_Q^r$ is the set of torus points of denominator at most $Q$.

For $q\ge1$, let $v_2(q)$ be the exponent of $2$ in $q$ and define
\[
 \rho_\lambda(q):=2^{v_2(q)}
 \gcd\!\left(\lambda,\frac{q}{2^{v_2(q)}}\right).
\]

To obtain decay when bounded away from bounded denominator rational points, we simply apply an exponential sum estimate of Magyar~\cite{magyar2007distribution} for points of $\Z^4$ on a 3-sphere of radius $\sqrt{\lambda}$.
Having such an estimate is crucial to our arguments.
We remark that the corresponding theorem for $d\ge 5$ was proven by Magyar, Stein and Wainger~\cite{magyarsteinwainger2002spherical}.
The following four-dimensional estimate is the specialization of Lemma~1 of Magyar~\cite{magyar2007distribution}; we take $\epsilon = 1/8$, input the given further inequalities in the lemma, and suppress the bounded cutoff factor, whose absolute value is at most $1$.

\begin{theorem}[Magyar~\cite{magyar2007distribution}]\label{lem:magyar}
There is a constant $C>0$ such that, for every integer $\lambda\ge1$ and every
$\theta\in\T^4$,
\[
 \left|\sum_{\substack{x\in\Z^4\\ \|x\|^2=\lambda}}e^{2\pi i x\cdot\theta}\right|
 \le C\left(\lambda\sum_{q\le\sqrt\lambda}q^{-11/8}\rho_\lambda(q)^{1/2}
 \bigl(1+\sqrt\lambda\,\operatorname{dist}(\theta,\T^4[q])\bigr)^{-3/2}+\lambda^{7/8}\right).
\]
\end{theorem}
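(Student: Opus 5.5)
The statement is quoted from Magyar's Lemma~1, so my plan is to check that the specialization holds rather than to reprove the circle-method estimate.

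Magyar's Lemma~1 treats the Fourier transform of the normalized lattice-sphere measure on $\{x\in\Z^d:\|x\|^2=\lambda\}$. It splits this transform into two parts. The first is a main term: a sum over moduli $q\le\sqrt\lambda$ and residues $a/q$, in which each piece is a Kloosterman-type (normalized Gauss) sum times the continuous spherical transform, localized near $a/q$ by a smooth cutoff. The second is an error term of size $O(\lambda^{d/2-1-\delta})$ for some $\delta>0$ depending on $\epsilon$.

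I would carry out the reduction in the following order.

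\textbf{Step 1.} Set $d=4$ and $\epsilon=1/8$. The number of representations is then of order $\lambda^{d/2-1}=\lambda$, so the prefactor $\lambda$ is the correct scale. The error term becomes $\lambda^{1-1/8}=\lambda^{7/8}$.

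\textbf{Step 2.} Bound the arithmetic factor for each $q$. The Kloosterman-type sum over residues $a$ coprime to $q$ gives a factor $q^{-d/2}$ times a Kloosterman sum. A Weil-type bound controls that Kloosterman sum by $q^{1/2+\epsilon}\gcd(\lambda,q)^{1/2}$, with the usual adjustment at the prime $2$. That adjustment is exactly what the definition of $\rho_\lambda(q)$ encodes. Together these give $q^{-2}\cdot q^{1/2+1/8}\rho_\lambda(q)^{1/2}=q^{-11/8}\rho_\lambda(q)^{1/2}$.

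\textbf{Step 3.} Bound the analytic factor. The continuous spherical transform at scale $\sqrt\lambda$, evaluated at the displacement $\theta-a/q$, decays like $(1+\sqrt\lambda\,|\theta-a/q|)^{-(d-1)/2}=(\cdots)^{-3/2}$. This is the standard stationary-phase decay of $\Omega_4$. The cutoffs for distinct $a/q$ with the same $q$ have essentially disjoint supports. So the sum over $a$ is dominated by the nearest torsion point, which gives $\operatorname{dist}(\theta,\T^4[q])$. The cutoff factor has modulus at most $1$ and is dropped.

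\textbf{Step 4.} Multiply through by the normalization and absorb all constants into $C$.

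The main obstacle is bookkeeping. The exact form of the Kloosterman bound at $p=2$, and the finite overlap of the cutoffs, must both match Magyar's hypotheses. I would check the $2$-adic factor directly against his statement, and if necessary enlarge $C$ to absorb a bounded number of overlapping residues.
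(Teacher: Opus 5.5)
Your proposal is correct and matches the paper. The paper also obtains this as the $d=4$, $\epsilon=1/8$ specialization of Magyar's Lemma~1, inserting the stated bounds on the Kloosterman-type sums and on the continuous spherical transform and dropping the cutoff factor of modulus at most $1$. One notational slip: you use $a$ both for the scalar residue coprime to $q$, which is summed inside the Kloosterman sum, and for the lattice vector $l\in\Z^4$ with $l/q\in\T^4[q]$ near which the cutoff localizes; these are distinct parameters, but the conclusion is unaffected, since for each fixed $q$ only the torsion point nearest to $\theta$ survives the cutoff.
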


For $1\le Q\le p$ define
\[
 A_p(Q):=\sup\left\{
 |M_p(\theta)|:\theta\in\T^4,
 \operatorname{dist}(\theta,\mathfrak R_Q^4)\ge\frac Qp
 \right\}.
\]

We now deduce from Theorem~\ref{lem:magyar} uniform decay away from rational points with bounded denominator.

\begin{lemma}\label{lem:minor}
\[
 \lim_{Q\to\infty}\limsup_{p\to\infty}A_p(Q)=0.
\]
\end{lemma}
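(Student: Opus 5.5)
We apply Theorem~\ref{lem:magyar} with $\lambda=p^2$ and divide by $|X_p|=8(1+p+p^2)\ge 8p^2$. Fix $Q$, let $p>Q$ be a large odd prime, and take $\theta$ with $\operatorname{dist}(\theta,\mathfrak R_Q^4)\ge Q/p$. Then
\[
 |M_p(\theta)|\le C'\sum_{q\le p}q^{-11/8}\rho_{p^2}(q)^{1/2}
 \bigl(1+p\operatorname{dist}(\theta,\T^4[q])\bigr)^{-3/2}+C'p^{-9/8}.
\]
The last term vanishes as $p\to\infty$. We split the sum into the ranges $q\le Q$ and $Q<q\le p$.

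The first step is to control $\rho_{p^2}(q)$. Since $p$ is odd, $\gcd(p^2,q/2^{v_2(q)})$ divides $p^2$. For $q\le p$ it equals $p$ when $q=p$ and $1$ otherwise. Hence $\rho_{p^2}(q)=2^{v_2(q)}$ for $q<p$, and $\rho_{p^2}(p)=p$. The single term $q=p$ contributes at most $p^{-11/8}p^{1/2}=p^{-7/8}\to0$, so it can be discarded.

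For $q\le Q$, the hypothesis gives $\operatorname{dist}(\theta,\T^4[q])\ge Q/p$. Each such term is therefore at most $q^{-11/8}2^{v_2(q)/2}(1+Q)^{-3/2}$. The range $Q<q<p$ is handled by bounding the decay factor trivially by $1$, leaving
\[
 \sum_{q>Q}q^{-11/8}2^{v_2(q)/2}.
\]

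Everything thus reduces to the convergence of $\sum_{q\ge1}q^{-11/8}2^{v_2(q)/2}$; this is the one point needing care. Write $q=2^jm$ with $m$ odd. The sum factors as
\[
 \sum_j 2^{-11j/8}2^{j/2}\cdot\sum_{m\text{ odd}} m^{-11/8}
 =\sum_j 2^{-7j/8}\cdot\sum_{m\text{ odd}} m^{-11/8},
\]
and both factors are finite. It follows that the $q\le Q$ part is $O\bigl((1+Q)^{-3/2}\bigr)$, the $q>Q$ part is a convergent tail $o(1)$ as $Q\to\infty$, and both bounds are uniform in $p$ and $\theta$. Taking $\limsup_{p\to\infty}$ and then $Q\to\infty$ gives the lemma.
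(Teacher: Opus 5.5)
Your proof is correct and follows the paper's argument essentially step for step. Both apply Magyar's bound with $\lambda=p^2$, evaluate $\rho_{p^2}(q)$ to isolate the single term $q=p$ (contributing $p^{-7/8}$), split the sum at $q=Q$, and use the factorisation $q=2^jm$ to show $\sum_q q^{-11/8}2^{v_2(q)/2}<\infty$. The paper additionally quantifies the tail as $O(Q^{-3/8})$, which the limit statement does not need. One harmless slip: the error term is $\lambda^{7/8}/|X_p|=O(p^{7/4}/p^2)=O(p^{-1/4})$, not $O(p^{-9/8})$, but it still tends to $0$ as $p\to\infty$.
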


\begin{proof}
Apply Theorem~\ref{lem:magyar} with $\lambda=p^2$.  Since
$|X_p|=8(1+p+p^2)\ge8p^2$, division by $|X_p|$ gives, after changing the
absolute constant,
\begin{equation}\label{eq:four-dimensional-magyar}
 |M_p(\theta)|\le C\left(
 \sum_{q\le p}q^{-11/8}\rho_{p^2}(q)^{1/2}
 \bigl(1+p\operatorname{dist}(\theta,\T^4[q])\bigr)^{-3/2}
 +p^{-1/4}\right).
\end{equation}
Writing $q=2^j m$ with $m$ odd gives
\[
 \sum_{q\ge1}q^{-11/8}2^{v_2(q)/2}
 =\sum_{j\ge0}2^{-7j/8}\sum_{\substack{m\ge1\\m\text{ odd}}}m^{-11/8}<\infty.
\]
For the tail, first suppose $2^j\le Q$.  Since $q=2^jm>Q$ implies
$m>Q/2^j$, we have
\[
 2^{-7j/8}\sum_{\substack{m>Q/2^j\\m\text{ odd}}}m^{-11/8}
 \le C Q^{-3/8}2^{-j/2}.
\]
Summing over $2^j\le Q$ gives $O(Q^{-3/8})$.  If $2^j>Q$, then
\[
 \sum_{2^j>Q}2^{-7j/8}
 \sum_{\substack{m\ge1\\m\text{ odd}}}m^{-11/8}
 =O(Q^{-7/8}).
\]
Consequently
\[
 \sum_{q>Q}q^{-11/8}2^{v_2(q)/2}\le C Q^{-3/8}.
\]

Fix $Q$ and take $p>Q$.  Suppose
$\operatorname{dist}(\theta,\mathfrak R_Q^4)\ge Q/p$.  If $q\le Q$, then
$\T^4[q]\subseteq\mathfrak R_Q^4$, so
$\operatorname{dist}(\theta,\T^4[q])\ge Q/p$; also $p\nmid q$, and hence
$\rho_{p^2}(q)^{1/2}=2^{v_2(q)/2}$.  The total contribution of these moduli
in \eqref{eq:four-dimensional-magyar} is therefore at most $C Q^{-3/2}$.
The terms with $q>Q$ and $p\nmid q$ contribute at most $C Q^{-3/8}$ by the
tail estimate above.  Finally, if $p\mid q$ and $q\le p$, then $q=p$, and
its contribution is
\[
 p^{-11/8}\rho_{p^2}(p)^{1/2}=p^{-7/8}.
\]
Hence
\[
 A_p(Q)\le C\bigl(Q^{-3/2}+Q^{-3/8}+p^{-7/8}+p^{-1/4}\bigr).
\]
Taking first $p\to\infty$ and then $Q\to\infty$ proves the lemma.
\end{proof}

Next we show that near 0 the exponential sum can be approximated by $\Omega_4$. This is just an application of Malyshev's~\cite{malyshev1957sphere} equidistribution theorem for points of $\Z^4$ on a $3$-sphere.

\begin{lemma}\label{lem:major-zero}
For every $C>0$,
\[
 \lim_{p\to\infty}\sup_{\|u\|\le C}
 \left|M_p(u/p)-\Omega_4(\|u\|)\right|=0.
\]
\end{lemma}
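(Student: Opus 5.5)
The plan is to rewrite $M_p(u/p)$ as an integral of a continuous function against the empirical measure of the rescaled points $X_p/p\subseteq S^3$, and then apply Malyshev's equidistribution theorem. Put
\[
 \mu_p:=\frac1{|X_p|}\sum_{x\in X_p}\delta_{x/p},
\]
a probability measure on $S^3$. Then
\[
 M_p(u/p)=\frac1{|X_p|}\sum_{x\in X_p}e^{2\pi i (x/p)\cdot u}=\int_{S^3}e^{2\pi i u\cdot\omega}\,d\mu_p(\omega),
\]
and by the vector form of the spherical transform,
\[
 \Omega_4(\|u\|)=\int_{S^3}e^{2\pi i u\cdot\omega}\,d\sigma_4(\omega).
\]
So the statement is exactly that $\int g_u\,d\mu_p\to\int g_u\,d\sigma_4$, uniformly over the family $g_u(\omega)=e^{2\pi i u\cdot\omega}$ with $\|u\|\le C$.

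The first step invokes Malyshev's theorem: the lattice points on the $3$-sphere of radius $\sqrt n$ become equidistributed as $n\to\infty$ along integers whose $2$-adic part stays bounded. Here $n=p^2$ is odd, so this applies, and $\mu_p\to\sigma_4$ weakly. Stated that way (in the Linnik--Malyshev formulation), weak convergence means $\mu_p(\Omega)\to\sigma_4(\Omega)$ for spherical caps $\Omega$, or more generally for sets with boundary of $\sigma_4$-measure zero. Standard approximation then upgrades this to $\int g\,d\mu_p\to\int g\,d\sigma_4$ for every continuous $g$ on $S^3$.

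The second step upgrades pointwise convergence in $u$ to uniform convergence on the ball $\|u\|\le C$. The family $\{g_u:\|u\|\le C\}$ is uniformly bounded by $1$ and equi-Lipschitz, since
\[
 |g_u(\omega)-g_{u'}(\omega)|\le 2\pi\|u-u'\|.
\]
Consequently both $u\mapsto\int g_u\,d\mu_p$ and $u\mapsto\Omega_4(\|u\|)$ are $2\pi$-Lipschitz in $u$, uniformly in $p$. Given $\eta>0$, I cover the ball by a finite $\eta$-net $\{u_1,\dots,u_m\}$ and choose $p$ large enough that $|M_p(u_j/p)-\Omega_4(\|u_j\|)|<\eta$ for every $j$. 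For any $u$ in the ball, the triangle inequality then gives
\[
 |M_p(u/p)-\Omega_4(\|u\|)|<(1+4\pi)\eta,
\]
which proves the uniform limit.

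The only real obstacle is making sure the cited form of Malyshev's theorem covers the sequence $n=p^2$ and yields weak convergence; the rest is routine. If only the cap-counting version is available, I would approximate each $g_u$ uniformly by simple functions on a fine partition of $S^3$ into small cells with $\sigma_4$-null boundaries, with mesh chosen uniformly for the equi-Lipschitz family. Alternatively, I could use the Weyl criterion with spherical harmonics, where Malyshev-type bounds on harmonic sums over $X_p$ give the convergence directly.
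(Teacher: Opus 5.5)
Your proposal is correct and matches the paper's proof: it writes $M_p(u/p)$ as an integral against the empirical measure on $p^{-1}X_p$ and applies Malyshev's equidistribution theorem along $n=p^2$ (odd, so $4\nmid n$) to get pointwise convergence. It then upgrades to uniform convergence on $\|u\|\le C$ using the uniform $2\pi$-Lipschitz bound and a finite net.
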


\begin{proof}
For $x\in X_p$, put $\omega=x/p\in S^3$.  Then the corresponding summand in
$M_p(u/p)$ is $e^{2\pi i u\cdot\omega}$.

Malyshev~\cite{malyshev1957sphere} proved that, as $n\to\infty$ through
integers with $4\nmid n$, the normalized sets
\[
 n^{-1/2}\{x\in\Z^4:\|x\|^2=n\}\subseteq S^3
\]
become equidistributed with respect to spherical measure.  So, for odd primes $p$,
we may take $n=p^2$.
Thus $p^{-1}X_p$ becomes equidistributed on $S^3$ as
$p\to\infty$.  Consequently, for every fixed $u\in\R^4$,
\[
 M_p(u/p)\longrightarrow
 \int_{S^3}e^{2\pi i u\cdot\omega}\,d\sigma_4(\omega)
 =\Omega_4(\|u\|)
 \qquad\text{as }p\to\infty.
\]
For $u,v\in\R^4$ and $\omega\in S^3$,
\[
 |e^{2\pi i u\cdot\omega}-e^{2\pi i v\cdot\omega}|
 \le2\pi\|u-v\|.
\]
Consequently both $u\mapsto M_p(u/p)$ and
$u\mapsto\Omega_4(\|u\|)$ are $2\pi$-Lipschitz, uniformly in $p$.

Fix $\varepsilon>0$ and choose a finite $\varepsilon/(6\pi)$-net
$\{u_1,\ldots,u_m\}$ of the compact set $\{u:\|u\|\le C\}$. By pointwise convergence, for all
sufficiently large $p$,
\[
\left|M_p(u_j/p)-\Omega_4(\|u_j\|)\right|<\varepsilon/3
\qquad (1\le j\le m).
\]
Now, for any $\|u\|\le C$, choose $j$ with
$\|u-u_j\|<\varepsilon/(6\pi)$. Using the $2\pi$-Lipschitz bounds above,
\[
\begin{aligned}
\left|M_p(u/p)-\Omega_4(\|u\|)\right|
&\le \left|M_p(u/p)-M_p(u_j/p)\right| \\
&\quad +\left|M_p(u_j/p)-\Omega_4(\|u_j\|)\right| \\
&\quad +\left|\Omega_4(\|u_j\|)-\Omega_4(\|u\|)\right| \\
&<\varepsilon.
\end{aligned}
\]
Hence the convergence is uniform on $\{u:\|u\|\le C\}$, as required.
\end{proof}

Next we aim to combine Lemma~\ref{lem:minor} and Lemma~\ref{lem:major-zero} into a form suitable for our later applications.
In our application we will want to estimate over more than just individual points $a\in Z^r$ corresponding to individual points of $\R^d$, we further want to keep track of entire finite sets $W\subset \Z^r$ corresponding to isometric copies of a set in $\R^d$.
This actually makes $d=4$ an ideal dimension since we can identify $\R^4$ with the quaternion algebra $\mathbb H$ in the basis
$1,\mathbf i,\mathbf j,\mathbf k$.
The advantage of doing this is that quaternions form a normed division
algebra and the integer-coordinate quaternions
\[
 \mathbb H_{\mathbb Z}:=
 \Z+\Z\mathbf i+\Z\mathbf j+\Z\mathbf k
\]
form a ring.
Thus, for $y\in\R^4$, we can let $\mathsf L_y$ and
$\mathsf R_y$ denote the matrices for left and right multiplication by $y$ respectively.
Note that if $y$ is integral then so is $\mathsf L_y$ and
$\mathsf R_y$.
Also note that
\[
 \mathsf L_y^T\mathsf L_y=\mathsf R_y^T\mathsf R_y=\|y\|^2I_4,
 \qquad
 z\cdot\mathsf L_xa=x\cdot\mathsf R_a^Tz.
\]
These matrices will later allow us to transport entire congruent copies of $W$ at once while remaining within our lattice.
For $a\in\Z^4$ and
$\theta\in\T^4$, define
\[
 \mathcal K_{p,a}(\theta)
 :=\frac1{|X_p|}\sum_{x\in X_p}e^{2\pi i\theta\cdot\mathsf L_xa}
 =M_p(\mathsf R_a^T\theta).
\]

\begin{lemma}\label{lem:four-dimensional-kernel}
Let $W\subseteq\Z^4\setminus\{0\}$ be finite and let $0<\delta$.
There are $C_0>0$ and a positive integer $Q_0$ such that, for every sufficiently large odd
prime $p$, every $a\in W$, and every $\theta\in\T^4$ satisfying
\[
 \operatorname{dist}\!\left(\theta,
 \mathfrak R_{Q_0}^4\setminus\{0\}\right)\ge\frac{C_0}{p},
\]
we have
\[
 \left|\mathcal K_{p,a}(\theta)
 -\Omega_4\!\left(p\|a\|\operatorname{dist}(\theta,0)\right)\right|<\delta.
\]
\end{lemma}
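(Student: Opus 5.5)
The plan is to reduce everything to the two estimates on $M_p$, using $\mathcal K_{p,a}(\theta)=M_p(\mathsf R_a^T\theta)$. Since $\mathsf R_a^T=\mathsf R_{\bar a}$ is integral, this is well defined on $\T^4$. Two identities about $\mathsf R_a$ do the work. It scales norms, $\|\mathsf R_a^Tu\|=\|a\|\,\|u\|$. It is invertible up to an integer factor, $\mathsf R_a\mathsf R_a^T=\|a\|^2I_4$. Because $W$ is finite, it is enough to produce constants for each $a\in W$ separately and then take the worst ones. We use throughout that $\|a\|\ge1$ for $a\in\Z^4\setminus\{0\}$.

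First fix the constants, in this order. By Lemma~\ref{lem:minor}, choose $Q$ with $\limsup_{p}A_p(Q)<\delta/2$. Since $\Omega_4(t)\to0$, choose $T\ge Q$ with $|\Omega_4(t)|<\delta/2$ for all $t\ge T$. Then set
\[
C_0:=Q,\qquad Q_0:=Q\max_{a\in W}\|a\|^2 .
\]
We then split into two cases according to whether $\operatorname{dist}(\theta,0)\le T/p$.

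\emph{Near zero.} Suppose $\operatorname{dist}(\theta,0)\le T/p$. Lift $\theta$ to $u/p$ with $u\in\R^4$ and $\|u\|=p\operatorname{dist}(\theta,0)\le T$. Then $\mathsf R_a^T\theta\equiv \mathsf R_a^Tu/p$ and $\|\mathsf R_a^Tu\|=\|a\|\,\|u\|\le T\max_{a\in W}\|a\|$. Lemma~\ref{lem:major-zero}, applied with that constant in place of $C$, gives for large $p$
\[
\bigl|M_p(\mathsf R_a^Tu/p)-\Omega_4(\|a\|\,\|u\|)\bigr|<\delta .
\]
The right-hand argument is exactly $p\|a\|\operatorname{dist}(\theta,0)$, so this case is finished. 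The hypothesis of the lemma is not needed here.

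\emph{Away from zero.} Suppose $\operatorname{dist}(\theta,0)>T/p$. Then $p\|a\|\operatorname{dist}(\theta,0)>T$, so $|\Omega_4(p\|a\|\operatorname{dist}(\theta,0))|<\delta/2$. It therefore suffices to show $|M_p(\varphi)|<\delta/2$ for $\varphi=\mathsf R_a^T\theta$. By the choice of $Q$, this holds for large $p$ as soon as $\operatorname{dist}(\varphi,\mathfrak R_Q^4)\ge Q/p$.

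Suppose instead that $\varphi$ lies within $Q/p$ of some $r\in\T^4[q]$ with $q\le Q$. The goal is to pull this rational approximation back to $\theta$ and contradict the hypothesis.
\begin{enumerate}
\item Apply $\mathsf R_a$. Since $\mathsf R_a$ is integral and scales by $\|a\|$, the point $\|a\|^2\theta=\mathsf R_a\varphi$ is within $\|a\|Q/p$ of $\mathsf R_ar\in\T^4[q]$.
\item Lift to $\R^4$ and divide by $\|a\|^2$. This shows $\theta$ is within $Q/(\|a\|p)\le Q/p$ of a point $\theta'$ satisfying $\|a\|^2\theta'\in\T^4[q]$.
\item Such a $\theta'$ has denominator dividing $q\|a\|^2\le Q_0$, so $\theta'\in\mathfrak R_{Q_0}^4$.
\item The hypothesis, with $C_0=Q$, forbids $\theta'\neq0$. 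Hence $\theta'=0$, and so $\operatorname{dist}(\theta,0)\le Q/p\le T/p$, contradicting the case assumption.
\end{enumerate}

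The main obstacle I expect is this pullback step: getting the rational approximation of $\mathsf R_a^T\theta$ to transfer correctly to $\theta$ on the torus. In particular, the lift-and-divide must be done with representatives chosen so that the distance genuinely shrinks by the factor $\|a\|^2$, rather than picking up a spurious torus-period ambiguity. The remaining steps are bookkeeping of constants, all uniform over the finite set $W$.
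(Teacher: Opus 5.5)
Your proposal is correct and is essentially the paper's proof: the same choice of $Q$ via Lemma~\ref{lem:minor}, the same $Q_0=Q\max_{a\in W}\|a\|^2$, and the same near-zero/away-from-zero split. Your pullback step is the contrapositive of the paper's inequality $\operatorname{dist}(\mathsf R_a^T\theta,\mathfrak R_Q^4)\ge\|a\|\operatorname{dist}(\theta,\mathfrak R_{Q_0}^4)$, which the paper derives from $\operatorname{dist}(\theta,\T^4[ms])=m^{-1}\operatorname{dist}(m\theta,\T^4[s])$. The only cosmetic difference is that you decouple the case threshold $T$ from $C_0=Q$, whereas the paper uses a single $C_0\ge Q$ for both roles. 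The lifting concern you flag is harmless: fix a lift $\tilde\theta$ of $\theta$ first, use the lift $\mathsf R_a^T\tilde\theta$ of $\varphi$, and choose a lift $\tilde r\in q^{-1}\Z^4$ of $r$ with $\|\mathsf R_a^T\tilde\theta-\tilde r\|<Q/p$. Then $\tilde\theta$ lies within $Q/(\|a\|p)$ of $\mathsf R_a\tilde r/\|a\|^2\in(q\|a\|^2)^{-1}\Z^4$, with the strict inequalities preserved.
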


\begin{proof}
The lemma trivially holds if $W=\varnothing$, so we may assume otherwise.
By Lemma~\ref{lem:minor}, choose an integer $Q$ such that
\[
 \limsup_{p\to\infty}A_p(Q)<\delta/2.
\]
So, $A_p(Q)<\delta/2$ for all sufficiently large $p$.
Using $\Omega_4(t)\to0$ as $t\to\infty$, choose $C_0\ge Q$ such that
\[
 |\Omega_4(t)|<\delta/2\qquad(t\ge C_0).
\]
Taking preimages under $\mathsf R_a^T$ can enlarge denominators: since $\mathsf R_a$ is integral and $\mathsf R_a\mathsf R_a^T=\|a\|^2I_4$, every preimage of $\T^4[s]$ under $\mathsf R_a^T$ lies in $\T^4[s\|a\|^2]$.
We therefore put
\[
 Q_0:=Q\max_{a\in W}\|a\|^2.
\]

Fix $a\in W$.
Suppose first that $\theta$ satisfies the hypothesis of the lemma with
$\operatorname{dist}(\theta,0)\le C_0/p$.
Choose the representative \(\theta_0\in\mathbb R^4\) of \(\theta\) with
$
\|\theta_0\|=\operatorname{dist}(\theta,0),
$
and put
$u=p\mathsf R_a^T\theta_0$.  Since $\mathsf R_a$ is a similitude of ratio
$\|a\|$,
\[
 \|u\|=p\|a\|\,\|\theta_0\|
 \le C_0\max_{b\in W}\|b\|.
\]
Thus, $\mathcal K_{p,a}(\theta) = M_p(u/p)$ and so Lemma~\ref{lem:major-zero} gives, uniformly in $a$ and $\theta$ in this case,
\[
 \left|\mathcal K_{p,a}(\theta)
 -\Omega_4\!\left(p\|a\|\operatorname{dist}(\theta,0)\right)\right|<\delta,
\]
for all sufficiently large odd primes $p$, as required.

We may therefore assume that
$\operatorname{dist}(\theta,0)>C_0/p$.  Together with the hypothesis of
the lemma, this gives
\[
 \operatorname{dist}(\theta,\mathfrak R_{Q_0}^4)\ge C_0/p.
\]
For $1\le s\le Q$, we have $s\|a\|^2\le Q_0$.  Using
$\operatorname{dist}(\theta,\T^4[ms])
=m^{-1}\operatorname{dist}(m\theta,\T^4[s])$, together with
$\mathsf R_a\T^4[s]\subseteq\T^4[s]$ and the fact that $\mathsf R_a$
is a similitude of ratio $\|a\|$, we obtain
\[
\begin{aligned}
 \|a\|\operatorname{dist}(\theta,\mathfrak R_{Q_0}^4)
 &\le
 \|a\|\operatorname{dist}\!\left(\theta,\T^4[s\|a\|^2]\right)\\
 &=
 \frac1{\|a\|}
 \operatorname{dist}\!\left(
 \mathsf R_a\mathsf R_a^T\theta,\T^4[s]\right)\\
 &\le
 \operatorname{dist}\!\left(
 \mathsf R_a^T\theta,\T^4[s]\right).
\end{aligned}
\]
Taking the minimum over $1\le s\le Q$ gives
\[
 \operatorname{dist}(\mathsf R_a^T\theta,\mathfrak R_Q^4)
 \ge \|a\|\operatorname{dist}(\theta,\mathfrak R_{Q_0}^4)
 \ge \frac{C_0\|a\|}{p}
 \ge\frac Qp.
\]
Thus, by the definition of $A_p(Q)$,
\[
 |\mathcal K_{p,a}(\theta)|
 =|M_p(\mathsf R_a^T\theta)|
 \le A_p(Q)<\delta/2.
\]
Also $p\|a\|\operatorname{dist}(\theta,0)>C_0$, since $\|a\|\ge1$, and
hence the corresponding $\Omega_4$ term has absolute value less than
$\delta/2$.  The triangle inequality then completes the proof.
\end{proof}

\section{Four-dimensional transference}\label{sec:four-dimensional-arithmetic}

In this section we prove Theorem~\ref{thm:radial-transfer-four}, which is our transference theorem in the four dimensional case, and which is sufficient to prove the four dimensional case of Theorem~\ref{thm:main}.
However, we remark that the lemma we prove in this section, Lemma~\ref{lem:dmp-planar}, is also required for our proof of the transference theorem in dimensions $d\ge 5$ (see Theorem~\ref{thm:radial-transfer}).

As in our solution to Rosenfeld's odd distance problem \cite{davies2023odd}, we shall handle the exponential sum estimates near rationals with bounded denominator by designing the embedded distance graph so that the exponential sum can be split into parts where only a small proportion can fall into the unwieldy small denominator major arc case.

For showing that only a small proportion can fall into this bad case we require the following lemma on particular rotations in $\R^2$. It is close to similar lemmas of the author, McCarty and Pilipczuk~\cite{dmp2024} that were used to extend the odd distance theorem to prime and polynomial distances.
However, we require some additional properties and also include the full proof since it is elementary and fundamental to our arguments.
We remark that the ``Moreover'' part of this lemma is not used until Section~\ref{sec:higher-dimensional-arithmetic} in the proof of Lemma~\ref{lem:primary-frames-high}.

\begin{lemma}\label{lem:dmp-planar}
Let $Q,K$ be positive integers.  There exist an irrational number $\vartheta>0$ with
$\vartheta^2\in\Z$, a positive integer $\kappa$, a number $\gamma>0$, and integer
pairs $(r_j,s_j)$, $0\le j<K$, such that
\[
 C_j:=\frac1\kappa
 \begin{pmatrix}r_j&-\vartheta s_j\\ \vartheta s_j&r_j\end{pmatrix}\in SO(2)
 \qquad(0\le j<K),
\]
and, writing $q=Q!$, for every $\omega=(\omega_1,\omega_2)\in\T^2$
at most two indices $j$ satisfy
\[
 \operatorname{dist}\!\left(r_j\omega_1+s_j\omega_2,
 \T^1[q]\setminus\{0\}\right)<\gamma.
\]
Moreover, the parameters may be chosen so that,
\[
 \gcd(\vartheta^2,q)=1,\qquad \frac{\kappa+r_j}{\vartheta^2}\in\Z
 \qquad(0\le j<K).
\]
\end{lemma}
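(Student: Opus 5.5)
The plan is to build the rotations from the norm form of $\Q(\vartheta\mathbf i)$ with $\vartheta^2=N$, and to choose the integer pairs $(r_j,s_j)$ one at a time so that no three of them can be simultaneously bad. Concretely, I take $N$ to be a squarefree integer $>1$ coprime to $q=Q!$ (for instance a prime larger than $Q$). Then $\vartheta=\sqrt N$ is irrational, $\vartheta^2\in\Z$ and $\gcd(\vartheta^2,q)=1$. For integers $m,n$ the pair $(m^2-Nn^2,\,2mn)$ satisfies
\[
(m^2-Nn^2)^2+N(2mn)^2=(m^2+Nn^2)^2 .
\]
So it gives an element of $SO(2)$ of the required shape, with denominator $\kappa(m,n)=m^2+Nn^2$. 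I will choose $(m_j,n_j)$ for $0\le j<K$ and put $\kappa=\prod_j\kappa(m_j,n_j)$. The pair $(r_j,s_j)$ is the $j$th pair multiplied by $\kappa/\kappa(m_j,n_j)$, which is an integer. Then every $C_j$ has the common denominator $\kappa$.

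\textbf{Reduction to a finite condition.} I first reduce ``at most two bad indices'' to an exact statement about rational points. Suppose indices $j\ne k$ are both bad at $\omega$. Then $(r_j\omega_1+s_j\omega_2,\;r_k\omega_1+s_k\omega_2)$ lies within $\gamma$ of $(\T^1[q]\setminus\{0\})^2$. The integer matrix with rows $(r_j,s_j),(r_k,s_k)$ has nonzero determinant $\Delta_{jk}$, because the rotations are distinct and hence the vectors are independent. It follows that $\omega$ lies within $O(\gamma)$ of a finite set $P_{jk}\subseteq\T^2[q\Delta_{jk}]$. The implied constant depends only on this matrix, through the norm of its inverse on the torus.

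Suppose now that for every $l\notin\{j,k\}$ and every $\omega\in P_{jk}$ we have $r_l\omega_1+s_l\omega_2\notin\T^1[q]\setminus\{0\}$. Both sides are finite rational sets, so there is a positive gap between them. Taking $\gamma$ smaller than all these gaps divided by the Lipschitz constants $|r_l|+|s_l|$ rules out a third bad index. Since there are finitely many triples, one $\gamma>0$ works for all of them. The lemma is thereby reduced to the exact condition
\[
 \text{for distinct } j,k,l \text{ and } \omega\in P_{jk}:\qquad r_l\omega_1+s_l\omega_2\notin\T^1[q]\setminus\{0\}.
\]

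\textbf{Choosing the pairs and the main obstacle.} The main obstacle is to meet this exact condition while keeping the arithmetic shape, and also the ``Moreover'' divisibility. The rescaling by $\kappa/\kappa(m_j,n_j)$ couples all the pairs, so I do not control each form independently. I would handle this by choosing $(m_j,n_j)$ inductively with rapidly growing sizes, and by imposing congruence conditions through the Chinese remainder theorem. The key freedom is that multiplying a form by an integer unit modulo $q\Delta$ permutes $\T^1[q]$. I would therefore require the $\kappa(m_j,n_j)$ to be primes not dividing $qN$ and not dividing any earlier determinant. Modulo the relevant denominators, the scaling factors are then units, and the exact condition can be checked on the unscaled primitive forms.

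For those primitive forms, the point $\omega\in P_{jk}$ is an explicit rational point. The value of the $l$th form at $\omega$ lies in $\T^1[q]$ only if a specific linear congruence in $(m_l,n_l)$ holds modulo $q\Delta_{jk}$. Adding one more residue condition on $(m_l,n_l)$ avoids this congruence unless the value is $0$, so the condition is met. I expect the delicate bookkeeping to be exactly here: the congruence is quadratic in $(m_l,n_l)$, and the later $\Delta_{jl},\Delta_{kl}$ must also be accommodated.

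\textbf{The ``Moreover'' part.} Here $\kappa+r_j\equiv0\pmod N$ holds provided $\kappa(m_j,n_j)\equiv m_j^2$ and $r_j\equiv(\kappa/\kappa(m_j,n_j))\,m_j^2\pmod N$. Thus $\kappa+r_j\equiv(\kappa/\kappa(m_j,n_j))(m_j^2+m_j^2)$, and I only need a sign adjustment. I would replace $(r_j,s_j)$ by $(-r_j,-s_j)$, which is still a rotation of the required form, giving $\kappa-r_j\to\kappa+r_j\equiv0\pmod N$. This sign change does not affect the badness condition, since $\T^1[q]\setminus\{0\}$ is symmetric.
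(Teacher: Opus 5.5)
Your reduction to an exact three-index condition is fine, and so is the sign trick for the ``Moreover'' part. The construction itself cannot work, though. The step you call bookkeeping (``adding one more residue condition on $(m_l,n_l)$ avoids this congruence'') is exactly where it fails.

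In your set-up $\kappa=\prod_j\kappa(m_j,n_j)$ is a product of primes not dividing $q$, so $\gcd(\kappa,q)=1$. Since $r_j^2+Ns_j^2=\kappa^2$, no prime $p\mid q$ divides both $r_j$ and $s_j$. Now take $Q\ge3$, so $3\mid q$, and any three indices $j,k,l$ (these exist once $K\ge3$). For $\omega=(x/3,y/3)\in\T^2[3]$, the value $r_j\omega_1+s_j\omega_2$ lies in $\T^1[3]$. It vanishes exactly when $r_jx+s_jy\equiv0\pmod 3$, and since $(r_j,s_j)\not\equiv(0,0)\pmod 3$, this set is a line through the origin in $(\Z/3\Z)^2$. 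Three such lines cover at most $7$ of the $9$ points. Hence some $\omega\in\T^2[3]$ has all three values in $\T^1[3]\setminus\{0\}\subseteq\T^1[q]\setminus\{0\}$, and all three indices are bad for every $\gamma>0$.

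So no choice of residues, growth rates or Chinese-remainder conditions can rescue a construction in which $\kappa$ is coprime to $q$. Your plan to make the scalings units modulo $q$ and check the primitive forms is precisely the wrong move. The same count shows more: for each odd prime $p\mid q$, all but at most two of the vectors $(r_j,s_j)$ must be divisible by $p$.

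The missing idea is to make the vectors highly non-primitive $q$-adically, in a graded way. The paper takes $\vartheta^2=4q^{2K}-1$, $m_j=q^{K+j}+q^{K-j}+1$, $X=\prod_h m_h$ and $\kappa=2q^KX$, so $q^K\mid\kappa$. In your parametrization this is $(m,n)=(1+2q^{K-j},1)$ up to signs, with $\kappa(m,n)=4q^{K-j}m_j$. The rescaling factor is then $q^jX/(2m_j)$, which carries a $j$-dependent power of $q$.

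This gives $s_h\equiv q^h\pmod{q^{h+1}}$ and $(r_h+s_h)/2\equiv-q^{2K-h}\pmod{q^{2K-h+1}}$ for $h>0$. In particular every $(r_h,s_h)$ with $h\ge1$ is divisible by $q$. It follows that for $i<j<\ell$ the primitive relation $a(r_i,s_i)+b(r_j,s_j)+c(r_\ell,s_\ell)=0$ has $q\mid a$, $q\mid c$ and $\gcd(b,q)=1$.

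Put $\gamma=1/(q(M+1))$, where $M$ bounds $|a|+|b|+|c|$ over all triples. Three bad indices would give $1/q\le\operatorname{dist}(bt_j,0)<M\gamma<1/q$, where $t_j$ is the nearby nonzero point of $\T^1[q]$. This proves the approximate statement directly, with no compactness reduction needed.
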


\begin{proof}
For $0\le j<K$, let
\[
 m_j:=q^{K+j}+q^{K-j}+1
      =q^{K-j}(q^{2j}+1)+1.
\]
Set
\[
X:=\prod_{h=0}^{K-1}m_h, \qquad \vartheta:=\sqrt{4q^{2K}-1},\qquad \kappa:=2q^KX.
\]
Note that $\vartheta^2\in \Z$, $\gcd(\vartheta^2,q)=1$, and $\vartheta \not\in \Q$ since 
\[
 (2q^K-1)^2<\vartheta^2<(2q^K)^2.
\]
For each $0\le j < K$, set
\[
 s_j:=\frac{Xq^j}{m_j}(1+2q^{K-j}),\qquad
 r_j:=\kappa-\frac{Xq^j}{m_j}(1+2q^{K-j})^2.
\]
Since $m_j\mid X$, these are integers.

To check that the matrices $C_j$ are rotations, it is enough to check that $r_j^2 + \vartheta^2 s_j^2 = \kappa^2$.
Note that
\[
 \vartheta^2+(1+2q^{K-j})^2
 =4q^{K-j}m_j,
\]
and hence
\[
 2\kappa
 =\frac{Xq^j}{m_j}
   \bigl(\vartheta^2+(1+2q^{K-j})^2\bigr).
\]
Therefore
\[
 \kappa-r_j=\frac{Xq^j}{m_j}(1+2q^{K-j})^2,
 \qquad
 \kappa+r_j=\frac{Xq^j}{m_j}\vartheta^2.
\]
Since
$m_j\mid X$, the second identity gives $(\kappa+r_j)/\vartheta^2 \in\Z$.  Also
\[
 \kappa^2-r_j^2
 =(\kappa-r_j)(\kappa+r_j)
 =\vartheta^2\left(\frac{Xq^j}{m_j}(1+2q^{K-j})\right)^2
 =\vartheta^2s_j^2.
\]
Thus $r_j^2+\vartheta^2s_j^2=\kappa^2$, and hence $C_j\in SO(2)$.

It remains to prove the separation property.
Since any three of the vectors $(r_j,s_j)$ are linearly dependent over $\Q$,
the following number is well-defined:
\[
 M:=\max_{0\le i<j<\ell<K}\;
 \min\Bigl\{|a|+|b|+|c|:\ a,b,c\in\Z,\ \gcd(a,b,c)=1,
 \ a(r_i,s_i)+b(r_j,s_j)+c(r_\ell,s_\ell)=0\Bigr\}.
\]
Set
\[
 \gamma:=\frac{1}{q(M+1)}.
\]

Suppose for sake of contradiction, that for some $\omega=(\omega_1,\omega_2)\in\T^2$
there are indices $i<j<\ell$ such that for every $h\in 
\{i,j,\ell\}$ we have that 
\[
 \operatorname{dist}\!\left(r_h\omega_1+s_h\omega_2,
 \T^1[q]\setminus\{0\}\right)<\gamma.
\]
By the definition of $M$, there is a primitive relation
\begin{equation}
 a(r_i,s_i)+b(r_j,s_j)+c(r_\ell,s_\ell)=0
 \label{eq:dmp-relation}
\end{equation}
with $|a|+|b|+|c|\le M$.

Since every $m_u\equiv1\pmod q$, we have for every $h\in 
\{i,j,\ell\}$ that 
$X/m_h=\prod_{u\ne h}m_u\equiv1\pmod q$, and hence
\begin{equation}
 s_h=\frac{X}{m_h}q^h(1+2q^{K-h})
 \equiv q^h\pmod{q^{h+1}}.
 \label{eq:dmp-s-congruence}
\end{equation}
The second coordinate of \eqref{eq:dmp-relation}, together with
\eqref{eq:dmp-s-congruence}, gives
\[
 0=as_i+bs_j+cs_\ell\equiv aq^i\pmod{q^{i+1}},
\]
so $q\mid a$.  Also
\begin{equation}
 \frac{r_h+s_h}{2}
 =\frac{X}{m_h}q^{2K-h}(q^{2h}-1)
 \equiv-q^{2K-h}\pmod{q^{2K-h+1}}
 \qquad(h>0).
 \label{eq:dmp-rs-congruence}
\end{equation}
For $h=0$, the same formula gives $(r_0+s_0)/2=0$.  Adding the two coordinates
of \eqref{eq:dmp-relation}, dividing by $2$, and using
\eqref{eq:dmp-rs-congruence} gives
\[
 0=a\frac{r_i+s_i}{2}+b\frac{r_j+s_j}{2}
   +c\frac{r_\ell+s_\ell}{2}
 \equiv-cq^{2K-\ell}\pmod{q^{2K-\ell+1}},
\]
so $q\mid c$.  Since $\gcd(a,b,c)=1$, it follows that
\[
 \gcd(b,q)=1.
\]

Now for each $h\in\{i,j,\ell\}$, choose $t_h\in\T^1[q]\setminus\{0\}$ such that
\[
 \operatorname{dist}(r_h\omega_1+s_h\omega_2,t_h)<\gamma.
\]
Since $q=Q!$, we have $qt_h\equiv0\pmod1$ for each $h$.  Hence $q\mid a$
and $q\mid c$ 
gives
\[
 at_i\equiv ct_\ell\equiv0\pmod1.
\]
Since multiplication by $b$ is an automorphism of the finite group
$\T^1[q]$, the point $bt_j$ is nonzero.  Hence
\[
 \operatorname{dist}(bt_j,0)\ge\frac1q.
\]
Using \eqref{eq:dmp-relation} modulo $1$,
\[
\begin{aligned}
 bt_j
 &\equiv at_i+bt_j+ct_\ell\\
 &\equiv a\bigl(t_i-r_i\omega_1-s_i\omega_2\bigr)
   +b\bigl(t_j-r_j\omega_1-s_j\omega_2\bigr)
   +c\bigl(t_\ell-r_\ell\omega_1-s_\ell\omega_2\bigr)
   \pmod1.
\end{aligned}
\]
Therefore, by our choice of $t_i,t_j,t_\ell, M, \gamma$, we have that
\[
 \frac1q
 \le \operatorname{dist}(bt_j,0)
 <(|a|+|b|+|c|)\gamma
 \le M\gamma<\frac1q,
\]
a contradiction.
Hence at most two indices satisfy the inequality in the statement, as desired.
\end{proof}

We are now ready to prove our four dimensional transference theorem.
A finite family $\mathcal G$ of homomorphisms is treated as an indexed
family: sums over $\mathcal G$ and its size $|\mathcal G|$ count
repetitions with multiplicity.  It is symmetric if $\Gamma$ and
$-\Gamma$ occur with the same multiplicity for every $\Gamma$.

\begin{theorem}\label{thm:radial-transfer-four}
Let $W\subseteq\Z^4\setminus\{0\}$ be finite, let $c_\infty\in\R$, and let
$c_w\in\R$ for $w\in W$.  Suppose
\begin{equation}\label{eq:four-transfer-radial}
 F(\tau):=c_\infty+\sum_{w\in W}c_w\Omega_4(\|w\|\tau)\ge0
 \qquad(\tau\ge0).
\end{equation}
Then for every $\varepsilon>0$ there exist a positive nonsquare integer $n$,
an injective homomorphism
\[
 E:\Z^8\longrightarrow\Q(\sqrt n)^4\subseteq\R^4,
\]
and a finite symmetric family $\mathcal G$ of injective homomorphisms
$\Gamma:\Z^4\to\Z^8$, 
such that
\[
 \|E\Gamma w\|=\|w\|
 \qquad(w\in\Z^4,\ \Gamma\in\mathcal G),
\]
and
\begin{equation}\label{eq:four-transfer-positive}
 F_\varepsilon(\theta):=c_\infty+\varepsilon+
 \frac1{|\mathcal G|}\sum_{\Gamma\in\mathcal G}\sum_{w\in W}c_w
       e^{2\pi i\Gamma w\cdot\theta}>0
 \qquad(\theta\in\T^8).
\end{equation}
\end{theorem}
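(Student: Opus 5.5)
The plan is to build $\Z^8=\Z^4\oplus\Z^4$ out of two copies of the quaternionic lattice. One copy is multiplied by $\vartheta$ through a fixed complex structure, so that the planar rotations $C_j$ of Lemma~\ref{lem:dmp-planar} act isometrically on $\R^4$. The sum $F_\varepsilon$ then becomes an average of the kernels $\mathcal K_{p,w}$ over $K$ rotated torus points. Most of these points are controlled by Lemma~\ref{lem:four-dimensional-kernel}, and Lemma~\ref{lem:dmp-planar} shows that only boundedly many of them are bad.

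\textbf{Parameters.} Put $S:=\sum_{w\in W}|c_w|$; if $S=0$ the claim is trivial. Choose $\delta>0$ with $\delta S<\varepsilon/3$. Apply Lemma~\ref{lem:four-dimensional-kernel} to $W$ and $\delta$ to get $C_0,Q_0$. Choose $K$ with $16S/K<\varepsilon/3$. Apply Lemma~\ref{lem:dmp-planar} with $Q=Q_0$ and this $K$ to get $\vartheta,\kappa,\gamma,(r_j,s_j)$, and set $n:=\vartheta^2$. Finally take an odd prime $p$ so large that Lemma~\ref{lem:four-dimensional-kernel} applies and $C_0/p<\gamma$.

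\textbf{The embedding.} Let $J:=\mathsf L_{\mathbf i}$. It is an integral orthogonal matrix with $J^T=-J$ and $J^2=-I_4$. Define
\[
E(y,z):=\frac{y+\vartheta Jz}{p\kappa}\in\Q(\sqrt n)^4 .
\]
This map is injective, because $\vartheta\notin\Q$ and $J$ is invertible over $\Q$. For $x\in X_p$ and $0\le j<K$, define
\[
\Gamma_{x,j}w:=(r_j\mathsf L_xw,\ s_j\mathsf L_xw),
\]
and let $\mathcal G:=\{\Gamma_{x,j}\}$. Each $\Gamma_{x,j}$ is injective: $\mathsf L_x$ is invertible, and $(r_j,s_j)\neq0$ since $r_j^2+\vartheta^2s_j^2=\kappa^2>0$. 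The family is symmetric because $X_p=-X_p$. We have
\[
E\Gamma_{x,j}w=\frac{(r_jI+\vartheta s_jJ)\mathsf L_xw}{p\kappa}.
\]
The antisymmetry of $J$ gives $(r_jI+\vartheta s_jJ)^T(r_jI+\vartheta s_jJ)=(r_j^2+\vartheta^2s_j^2)I=\kappa^2I$. Combined with $\mathsf L_x^T\mathsf L_x=p^2I$, this yields $\|E\Gamma w\|=\|w\|$.

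\textbf{Reduction to kernels.} For $\theta=(\theta_1,\theta_2)\in\T^8$, put $\omega_j:=r_j\theta_1+s_j\theta_2\in\T^4$. Then $\Gamma_{x,j}w\cdot\theta=\mathsf L_xw\cdot\omega_j$, so
\[
F_\varepsilon(\theta)=\varepsilon+\frac1K\sum_{j<K}\Bigl(c_\infty+\sum_{w\in W}c_w\mathcal K_{p,w}(\omega_j)\Bigr).
\]
This expression is real by symmetry.

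\textbf{Good and bad indices.} Call $j$ \emph{good} if $\operatorname{dist}(\omega_j,\mathfrak R^4_{Q_0}\setminus\{0\})\ge C_0/p$. For good $j$, Lemma~\ref{lem:four-dimensional-kernel} gives that the $j$th bracket is at least $F(\tau_j)-\delta S\ge-\delta S$, where $\tau_j=p\operatorname{dist}(\omega_j,0)$ and we use \eqref{eq:four-transfer-radial}.

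Now suppose $j$ is bad. Then $\omega_j$ lies within $C_0/p<\gamma$ of some nonzero point of $\T^4[q']$ with $q'\le Q_0$. That point lies in $\T^4[q]$ with $q=Q_0!$ and has a nonzero coordinate. Hence some coordinate $c\in\{1,2,3,4\}$ of $\omega_j$, namely $r_j\theta_{1,c}+s_j\theta_{2,c}$, is within $\gamma$ of $\T^1[q]\setminus\{0\}$. By Lemma~\ref{lem:dmp-planar}, applied to $(\theta_{1,c},\theta_{2,c})\in\T^2$, this happens for at most two $j$ per coordinate. So there are at most $8$ bad $j$. Each bad bracket is at least $-|c_\infty|-S\ge-2S$, using $c_\infty=F(\tau)-\sum_w c_w\Omega_4(\|w\|\tau)\ge-S$ for large $\tau$.

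\textbf{Conclusion.} Combining the two cases,
\[
F_\varepsilon(\theta)\ge\varepsilon-\delta S-16S/K>0 .
\]

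\textbf{Main obstacle.} The delicate step is ensuring that each rotated point $\omega_j$ can fail Lemma~\ref{lem:four-dimensional-kernel} only by approaching a nonzero bounded-denominator point, and that this forces a single coordinate to fall into the exceptional set of Lemma~\ref{lem:dmp-planar}. This is where we need $q=Q_0!$ to absorb all denominators up to $Q_0$, and where the order of parameter choices matters: $\gamma$ is fixed before $p$, so that $C_0/p<\gamma$ can be arranged.
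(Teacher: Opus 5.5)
Your proof is correct and is essentially the paper's argument: the same planar rotations from Lemma~\ref{lem:dmp-planar}, the same reduction of $F_\varepsilon$ to an average over $j$ of $c_\infty+\sum_w c_w\mathcal K_{p,w}(\cdot)$, and the same count of at most eight bad indices. The only structural difference is that the paper puts the complex structure $\mathsf L_{\mathbf i}$ into the maps, via $\Gamma_{j,x}=B_j\mathsf L_x$ with $B_j=\bigl(r_jI_4;\ s_j\mathsf L_{\mathbf i}\bigr)$ and $E=p^{-1}\Psi$, whereas you put it into $E$; the two constructions differ only by the automorphism $(y,z)\mapsto(y,\mathsf L_{\mathbf i}z)$ of $\Z^8$.

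One small slip: the chain $-|c_\infty|-S\ge-2S$ fails when $c_\infty>S$. Write the bad bracket as $\ge c_\infty-S\ge-2S$ instead. Alternatively, note that $c_\infty\ge0$ (let $\tau\to\infty$, using $\Omega_4\to0$), which gives $\ge -S$ as in the paper.
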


\begin{proof}
Since $\Omega_4(t)\to0$ as $t\to\infty$, letting $\tau\to\infty$ in
\eqref{eq:four-transfer-radial} gives $c_\infty\ge0$.  Put $\Lambda:=\sum_{w\in W}|c_w|$ and choose $0<\delta<1$ so
that $2\delta \Lambda<\varepsilon$.
Apply Lemma~\ref{lem:four-dimensional-kernel} to $W$ and $\delta$, obtaining
$C_0,Q_0$.

Choose a positive integer $K$ with $8/K<\delta$ and apply
Lemma~\ref{lem:dmp-planar} with $Q=Q_0$, obtaining
$\vartheta,\kappa,\gamma$, and for $0\le j<K$, the integer pairs $(r_j,s_j)$, and the rotations
$C_j$.
Put $n:=\vartheta^2$.  Since $\vartheta>0$ is
irrational and $\vartheta^2\in\Z$, the integer $n$ is positive and nonsquare.
Define
\[
 \Psi:=\frac{1}{\kappa}
\begin{pmatrix}
1 & 0 & 0 & 0 & \vartheta & 0 & 0 & 0 \\
0 & 1 & 0 & 0 & 0 & \vartheta & 0 & 0 \\
0 & 0 & 1 & 0 & 0 & 0 & \vartheta & 0 \\
0 & 0 & 0 & 1 & 0 & 0 & 0 & \vartheta
\end{pmatrix},
 \qquad
 B_j:=
 \begin{pmatrix}
 r_j&0&0&0\\
 0&r_j&0&0\\
 0&0&r_j&0\\
 0&0&0&r_j\\
 0&-s_j&0&0\\
 s_j&0&0&0\\
 0&0&0&-s_j\\
 0&0&s_j&0
 \end{pmatrix}
 \qquad(0\le j<K).
\]
Note that $\Psi$ is injective on $\Z^8$, and
\[
 \Psi B_j=C_j\oplus C_j\in SO(4).
\]

Choose a sufficiently large odd prime $p$ such that $C_0/p<\gamma$ and the
conclusion of Lemma~\ref{lem:four-dimensional-kernel} holds.  Let
$\mathcal G$ be the indexed family
\[
 \Gamma_{j,x}:=B_j\mathsf L_x
 \qquad(0\le j<K,\ x\in X_p).
\]
Since $X_p=-X_p$ and $\mathsf L_{-x}=-\mathsf L_x$, the indexed family
$\mathcal G$ is symmetric, and therefore $F_\varepsilon$ is real valued.
Put $E:=p^{-1}\Psi$.  Then
$E(\Z^8)\subseteq\Q(\vartheta)^4=\Q(\sqrt n)^4\subseteq\R^4$, and $E$ is
injective.
Since $\Psi B_j\in SO(4)$ and $\mathsf L_x$ is a similitude of ratio $p$,
every $\Gamma_{j,x}$ is injective and
\[
 \|E\Gamma_{j,x}w\|=\|w\|
 \qquad(w\in\Z^4).
\]

It remains to prove \eqref{eq:four-transfer-positive}.  
Fix
$\theta=(x_1,x_2,x_3,x_4,y_1,y_2,y_3,y_4)\in\T^8$, then,
\[
 B_j^T\theta=
 \begin{pmatrix}
 r_jx_1+s_jy_2\\
 r_jx_2-s_jy_1\\
 r_jx_3+s_jy_4\\
 r_jx_4-s_jy_3
 \end{pmatrix}.
\]
Since $\mathfrak R_{Q_0}^1\subseteq\T^1[Q_0!]$,
Lemma~\ref{lem:dmp-planar} shows that, for each $1\le i \le 4$, the $i$-th coordinate of $B_j^T\theta$ lies within $\gamma$ of a nonzero point of
$\mathfrak R_{Q_0}^1$ for at most two indices $j$.  A nonzero point of $\mathfrak R_{Q_0}^4$ has a nonzero coordinate, so
at most eight indices $j$ satisfy
$ \operatorname{dist}(B_j^T\theta,
 \mathfrak R_{Q_0}^4\setminus\{0\})<\gamma$.
In particular, all but a proportion less than $\delta$ of the indices $j$ satisfy
\[
 \operatorname{dist}(B_j^T\theta,
 \mathfrak R_{Q_0}^4\setminus\{0\})\ge\gamma>C_0/p.
\]

Define
\[
 L_j:=c_\infty+\sum_{w\in W}c_w\mathcal K_{p,w}(B_j^T\theta).
\]
For every $j$ with $\operatorname{dist}(B_j^T\theta,
 \mathfrak R_{Q_0}^4\setminus\{0\})>C_0/p$, Lemma~\ref{lem:four-dimensional-kernel} gives
$|L_j-F(p\operatorname{dist}(B_j^T\theta,0))|\le\delta \Lambda$, and so
\eqref{eq:four-transfer-radial} therefore gives $L_j\ge-\delta \Lambda$.
For the remaining indices, $|\mathcal K_{p,w}(B_j^T\theta)|\le1$ and
$c_\infty\ge0$, so $L_j\ge-\Lambda$. 
Since the proportion of such indices is at most $\delta$, we therefore have
\[
\frac{1}{K}\sum_{j=0}^{K-1}L_j \ge -(1-\delta)\delta\Lambda - \delta\Lambda  \ge  -2\delta \Lambda.
\]
For $0\le j<K$, $x\in X_p$, and $w\in W$, we have
$B_j^T\theta \cdot\mathsf L_xw = \theta \cdot B_j\mathsf L_xw  =\theta\cdot\Gamma_{j,x}w$, and
$|\mathcal G|=K|X_p|$.
Hence
\[
F_\varepsilon(\theta)
=
 c_\infty+ \varepsilon+
 \frac1{|\mathcal G|}\sum_{\Gamma\in\mathcal G}\sum_{w\in W}c_w
 e^{2\pi i\Gamma w\cdot\theta}
 =
 \varepsilon
 +
 \frac1K\sum_{j=0}^{K-1}L_j 
 \ge
 \varepsilon
 -2\delta\Lambda > 0,
\]
as desired.
\end{proof}

\section{Higher-dimensional transference}\label{sec:higher-dimensional-arithmetic}

In this section we prove Theorem~\ref{thm:radial-transfer}, which is our transference theorem in dimensions $d
\ge 5$.
In Theorem~\ref{thm:radial-transfer-four} we obtained an injective homomorphism
\[
 E:\Z^8\longrightarrow\Q(\sqrt n)^4\subseteq\R^4,
\]
for some positive nonsquare $n$.
In Theorem~\ref{thm:radial-transfer}, we shall further obtain a much simpler injective homomorphism
\[
 E= m^{-1} I_d:\Z^d\longrightarrow m^{-1} \Z^d \subseteq  \Q^d\subseteq\R^d,
\]
for some positive integer $m$. In particular, this will give us our rational strengthening in Theorem \ref{thm:main} of $\chi(\Q^d)=\chi(\R^d)$ for $d\ge 5$.
For this we require the following more refined consequence of Lemma~\ref{lem:dmp-planar}.
The condition $d\ge5$ ensures that the quaternionic four-dimensional block has a nonzero orthogonal complement.
In the following construction we use Lagrange's four-square theorem in that block and combine each of its basis directions with a direction in the complement to lift the planar rotations of Lemma~\ref{lem:dmp-planar}.

\begin{lemma}\label{lem:primary-frames-high}
Let $d\ge5$, and let $Q,K$ be positive integers.
Then there exist an integer
$\lambda\ge1$, a number $\gamma>0$, a finite index set $\Sigma$, and
matrices $B_\sigma\in M_d(\Z)$ $(\sigma\in\Sigma)$ such that
\[
 B_\sigma^TB_\sigma=\lambda^2I_d
 \qquad(\sigma\in\Sigma),
\]
and, for every $\theta\in\T^d$,
\begin{equation}\label{eq:6.1}
 \#\bigl\{\sigma\in\Sigma:
 \operatorname{dist}(B_\sigma^T\theta,
 \T^d[Q!]\setminus\{0\})<\gamma\bigr\}
 \le \frac{2d}{K}|\Sigma|.
\end{equation}
\end{lemma}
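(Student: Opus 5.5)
The plan is to build each $B_\sigma$ as $\kappa$ times a rational rotation of $\R^d$. That rotation acts as one of the planar rotations $C_j$ of Lemma~\ref{lem:dmp-planar} inside a $2$-plane, and as a fixed orthogonal map on the complement of that plane. The irrational factor $\vartheta$ in $C_j$ is absorbed by letting the plane be spanned by a vector of norm $\vartheta$ in the quaternionic block. First apply Lemma~\ref{lem:dmp-planar} with the given $Q,K$, using the ``Moreover'' clause, to get $\vartheta,\kappa,\gamma_0$ and pairs $(r_j,s_j)$ with $n:=\vartheta^2\in\Z$, $\gcd(n,Q!)=1$ and $t_j:=(\kappa+r_j)/n\in\Z$. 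By Lagrange's four-square theorem, choose $v\in\Z^4$ with $\|v\|^2=n$. For a basis direction $e_a$ of the block $\R^4\subseteq\R^d$ (coordinates $1,\dots,4$), we may need to replace $v$ by a signed coordinate permutation $v^{(a)}$ adapted to $e_a$. Since $d\ge5$, there is a complement coordinate direction $e:=e_5$. Put $u=v^{(a)}/\vartheta$, let $P=uu^T+ee^T$, and define
\[
 B_{j,a}:=-\kappa I_d+(\kappa+r_j)P+s_j\bigl(v^{(a)}e^T-e\,v^{(a)T}\bigr)
 =-\kappa I_d+t_j\,v^{(a)}v^{(a)T}+(\kappa+r_j)ee^T+s_j\bigl(v^{(a)}e^T-e\,v^{(a)T}\bigr).
\]
All its entries are integers. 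It is $\kappa$ times the orthogonal map that equals $-I$ on $P^\perp$ and acts on $\operatorname{span}(u,e)$ as the rotation $u\mapsto(r_ju+\vartheta s_je)/\kappa$. Hence $B_{j,a}^TB_{j,a}=\kappa^2I_d$, and we set $\lambda=\kappa$ and $\Sigma=\{(j,a)\}$.

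For the counting bound \eqref{eq:6.1}, fix $\theta$ and an index $a$. A point of $\T^d[Q!]\setminus\{0\}$ has some nonzero coordinate, so it suffices to show the following: for each coordinate $i$, at most two values of $j$ put the $i$-th coordinate of $B_{j,a}^T\theta$ within $\gamma$ of $\T^1[Q!]\setminus\{0\}$. Summing over the $d$ coordinates then gives at most $2d$ bad $j$ out of $K$, uniformly in $a$, which is \eqref{eq:6.1}. The $i$-th coordinate has the form
\[
 \beta_i+t_j\,\omega_1+s_j\,\omega_2\pmod 1,
\]
where $\beta_i=-\kappa\theta_i+\kappa\,\mathbf 1_{i=5}\theta_5$ and $\omega_1,\omega_2\in\T$ depend on $\theta,a,i$ but not on $j$. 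I would absorb the term $\kappa\theta_5$ using $r_j=nt_j-\kappa$, so that the $j$-dependence is genuinely linear in the integer pair $(t_j,s_j)$.

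The main obstacle is that Lemma~\ref{lem:dmp-planar} controls $r_j\omega_1+s_j\omega_2$ with no constant term, whereas here we have the pair $(t_j,s_j)$ and an offset $\beta_i$. I would rerun its proof. Three pairs $(t_h,s_h)$ again satisfy a primitive integer relation $a't_i+b't_j+c't_\ell$-type, with the same relation holding in the $s$-coordinate. Because $n$ is invertible modulo $Q!$, the congruences \eqref{eq:dmp-s-congruence} and \eqref{eq:dmp-rs-congruence} should transfer to $(t_h,s_h)$ and force $Q!$ to divide the two outer coefficients. To kill the offset, I would first try to arrange that the relation coefficients sum to $0$, or to a multiple of $Q!$, modulo the denominator. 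If that fails, I would replace $B_{j,a}$ by $B_{j,a}B_{0,a}^{-1}\kappa$-type differences, or simply enlarge $\Sigma$ by also including the sign-flipped offset. The closing estimate $M\gamma<1/Q!$ then goes through with $\gamma$ chosen via the analogue of $M$ for the triples $(t_h,s_h)$.
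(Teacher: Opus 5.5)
Your construction does not satisfy the lemma, so the problem is the construction, not only the counting. Each $B_{j,a}$ rotates only the $2$-plane $\operatorname{span}(v^{(a)},e_5)$. On the $(d-2)$-dimensional orthogonal complement of that plane it acts as $-\kappa I$, whatever $j$ is, and taking a union over $a$ rather than a product does not help.

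Here is a counterexample. Let $Q\ge2$ and pick a primitive $z\in\Z^d$ orthogonal to $v^{(a)}$ and $e_5$. Let $\theta$ be the image of $-z/(Q!\,\kappa)$ in $\T^d$. Then $B_{j,a}^T\theta=-\kappa\theta=z/Q!$, which is a nonzero point of $\T^d[Q!]$ for every $j$.
\begin{itemize}
\item For $d\ge6$, the choice $z=e_6$ works for every $a$ simultaneously, so every $\sigma\in\Sigma$ is bad.
\item For $d=5$, all $K$ indices with the given $a$ are bad. That is a proportion $1/4$, which exceeds $2d/K$ once $K>40$.
\end{itemize}

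The same $\theta$ refutes your per-coordinate reduction. For $i\ne5$, the $i$-th coordinate is $-\kappa\theta_i+t_j\omega_1+s_j\omega_2$. Here $\omega_1=\omega_2=0$ while the offset is a nonzero torsion point, so every $j$ is bad for that coordinate. No rerun of Lemma~\ref{lem:dmp-planar} can absorb $\beta_i$. The offset arises because, for $i\le4$, the standard coordinate direction $e_i$ is neither in the rotation plane nor orthogonal to it.

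Two ideas are missing. First, every direction must be rotated by some factor carrying a free index. Second, closeness must be tested with functionals adapted to the rotation planes.

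The paper's construction is as follows.
\begin{itemize}
\item It takes $B_\sigma=C_{a_1,b_1}(j_1)\cdots C_{a_\ell,b_\ell}(j_\ell)$, a product of $\ell=d-1$ matrices of exactly your form.
\item The planes are spanned by $(w_1,e_1),\ldots,(w_1,e_{d-4}),(w_2,e_1),(w_3,e_1),(w_4,e_1)$, where $w_1,\ldots,w_4$ are the columns of $\mathsf R_w$ with $\|w\|^2=\vartheta^2$.
\item Thus every vector $v_h$ of the orthogonal basis $w_1,\ldots,w_4,e_1,\ldots,e_{d-4}$ lies in some plane. One takes $\Sigma=\{0,\ldots,K-1\}^{\ell}$ and $\lambda=\kappa^{\ell}$.
\end{itemize}

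The counting then works as follows.
\begin{itemize}
\item Since $\mathsf R_w^T\oplus I_{d-4}$ is invertible modulo $Q!$ (this is where $\gcd(\vartheta^2,Q!)=1$ enters), every nonzero $z\in\T^d[Q!]$ has $v_h\cdot z\ne0$ for some $h$.
\item The factor $\|v_h\|\le\vartheta$ is absorbed by taking $\gamma=\gamma'/\vartheta$.
\item For fixed $h$, one varies only the index of the last factor whose plane contains $v_h$. The factors to its right merely multiply $v_h$ by $-\kappa$.
\item That varied factor maps $w_a\mapsto r_jw_a-\vartheta^2s_je_b$ and $e_b\mapsto s_jw_a+r_je_b$. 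Hence $v_h\cdot B_\sigma^T\theta=r_j\omega_1+s_j\omega_2$ with no offset.
\item Lemma~\ref{lem:dmp-planar} then applies verbatim. It gives at most $2K^{\ell-1}$ bad $\sigma$ for each $h$, hence at most $2dK^{\ell-1}=\frac{2d}{K}|\Sigma|$ in total.
\end{itemize}
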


\begin{proof}
Put $q=Q!$ and apply Lemma~\ref{lem:dmp-planar} with $Q,K$, obtaining
$\vartheta,\kappa,\gamma'$, and for $0\le j<K$, pairs $(r_j,s_j)$ and matrices $C_j$ as in the lemma.
Set $\gamma:=\gamma'/\vartheta$.
Thus
\[
 r_j^2+\vartheta^2s_j^2=\kappa^2,
 \qquad \gcd(\vartheta^2,q)=1,
 \qquad \frac{\kappa+r_j}{\vartheta^2}\in\Z.
\]
The two additional arithmetic properties in Lemma~\ref{lem:dmp-planar} play different roles here.
The divisibility condition $(\kappa+r_j)/\vartheta^2\in\Z$ makes the matrices $C_{a,b}(j)$ defined below integral, while $\gcd(\vartheta^2,q)=1$ ensures that the quaternionic change of basis is invertible modulo $q$, so its coordinates detect every nonzero point of $\T^d[q]$.
By Lagrange's four-square theorem choose an integral quaternion
$w\in\Z^4$ with $\|w\|^2=\vartheta^2$.  Since
\[
 \mathsf R_w^T\mathsf R_w=\vartheta^2I_4,
\]
the columns $w_1,\ldots,w_4$ of $\mathsf R_w$ are pairwise orthogonal and
satisfy $\|w_a\|^2=\vartheta^2$.
Put $m=d-4$, and write $e_1,\ldots,e_m$ for the standard basis of $\R^m$,
viewing both families in $\R^d=\R^4\oplus\R^m$.
So,
the vectors \(\vartheta^{-1}w_1,\ldots,\vartheta^{-1}w_4,e_1,\ldots,e_m\) form an orthonormal basis of \(\mathbb R^d\).

For \(a\in[4]\), \(b\in[m]\), and \(0\le j<K\), define the matrix \(C_{a,b}(j)\)
to act as \(\kappa C_j^T\) on the ordered orthonormal basis
\((\vartheta^{-1}w_a,e_b)\) of \(\operatorname{span}\{\vartheta^{-1}w_a,e_b\}\), and as
\(-\kappa I\) on its orthogonal complement. Since \(C_j\in SO(2)\),
\[
C_{a,b}(j)^TC_{a,b}(j)=\kappa^2I_d.
\]

Since \(w_a\) and \(e_b\) are orthogonal and \(\|w_a\|^2=\vartheta^2\), for \(v\in\mathbb R^d\),
\[
\begin{aligned}
C_{a,b}(j)v
&=-\kappa\left(
v-\frac{v\cdot w_a}{\vartheta^2}w_a-(v\cdot e_b)e_b
\right)
+\frac{v\cdot w_a}{\vartheta^2}
\left(r_jw_a-\vartheta^2s_je_b\right)
+(v\cdot e_b)\left(s_jw_a+r_je_b\right)\\
&=-\kappa v
+\left(
\frac{\kappa+r_j}{\vartheta^2}(v\cdot w_a)
+s_j(v\cdot e_b)
\right)w_a
+\left(
(\kappa+r_j)(v\cdot e_b)-s_j(v\cdot w_a)
\right)e_b.
\end{aligned}
\]
Thus, since \(w_a,e_b\in\mathbb Z^d\) and
\[
\kappa,\ r_j,\ s_j,\ 
\frac{\kappa+r_j}{\vartheta^2}\in\mathbb Z,
\]
we have that \(C_{a,b}(j)\mathbb Z^d\subseteq\mathbb Z^d\), and hence
\[
C_{a,b}(j)\in M_d(\mathbb Z).
\]

Put $\ell:=m+3$ and set
\[
 (a_1,b_1),\ldots,(a_\ell,b_\ell)
 :=(1,1),(1,2),\ldots,(1,m),(2,1),(3,1),(4,1).
\]
Let
\[
 \Sigma:=\{0,\ldots,K-1\}^{\ell},
 \qquad
 B_\sigma:=C_{a_1,b_1}(j_1)\cdots C_{a_\ell,b_\ell}(j_\ell)
 \quad(\sigma=(j_1,\ldots,j_\ell)).
\]
Then $|\Sigma|=K^\ell$ and
$B_\sigma^TB_\sigma=\kappa^{2\ell}I_d$, so take $\lambda=\kappa^\ell$.
Clearly $B_\sigma \in M_d(\mathbb Z)$ for each $\sigma \in \Sigma$.

It remains to show \eqref{eq:6.1}.
Fix $\theta\in \T^d$.
Put
\[
v_h :=
\begin{cases}
w_h, & 1 \le h  \le 4,\\
e_{h-4}, & 5 \le h \le d.
\end{cases}
\]
Since $\mathsf R_w^T \mathsf R_w = \vartheta^2 I_4$ and
$\gcd(\vartheta^2,q)=1$, the matrix
\[
\mathsf R_w^T \oplus I_m
\]
is invertible modulo $q$. Hence, for every
$z \in \mathbb{T}^d[q] \setminus \{0\}$, there is some $h \in [d]$
with $v_h \cdot z \neq 0$. Moreover $\|v_h\| \le \vartheta$, and so,
since $\gamma=\gamma'/\vartheta$,
\[
\operatorname{dist}\bigl(x,\mathbb{T}^d[q]\setminus\{0\}\bigr)
<\gamma
\]
implies
\[
\operatorname{dist}\bigl(v_h\cdot x,
\mathbb{T}^1[q]\setminus\{0\}\bigr)
<\gamma'
\]
for some $h \in [d]$. By taking a union bound, to prove  \eqref{eq:6.1} it therefore suffices to show, for each
$h \in [d]$, that
\begin{equation}\label{eq:6.1.2}
\#\left\{
\sigma \in \Sigma :
\operatorname{dist}\bigl(v_h\cdot B_\sigma^T\theta,
\mathbb{T}^1[q]\setminus\{0\}\bigr)<\gamma'
\right\}
\le \frac{2}{K}|\Sigma|.
\end{equation}

Every basis direction $v_h$ occurs in at least one of the chosen rotation planes.
We vary the last such factor in the product $B_\sigma$, so that the factors to its right only rescale $v_h$; after fixing the other indices, this will leave a linear combination of $r_j$ and $s_j$ to which Lemma~\ref{lem:dmp-planar} applies.
Fix $h$, and let $t$ be largest such that $v_h$ is one of
$w_{a_t}$ or $e_{b_t}$. Fix all $j_s$, $s\neq t$, and for
$0\le j<K$ let $\sigma_j$ be obtained by setting $j_t=j$. Write
\[
B_{\sigma_j}=L C_{a_t,b_t}(j)R,
\]
where \(L\) and \(R\) are the products of the factors preceding and following \(C_{a_t,b_t}(j)\), respectively.
By the choice of \(t\), every factor in \(R\) acts as \(-\kappa I\) on \(v_h\), and hence
\[
Rv_h=(-\kappa)^{\ell-t}v_h.
\]
Thus, writing $x=L^T\theta$,
\[
v_h\cdot B_{\sigma_j}^T\theta
=
v_h\cdot R^T C_{a_t,b_t}(j)^T x
=
(Rv_h)\cdot C_{a_t,b_t}(j)^T x
=
(-\kappa)^{\ell-t}C_{a_t,b_t}(j)v_h \cdot x
=
(-\kappa)^{\ell-t}
x\cdot C_{a_t,b_t}(j)v_h.
\]
Now
\[
C_{a_t,b_t}(j)w_{a_t}
=
r_jw_{a_t}-\vartheta^2s_je_{b_t},
\qquad
C_{a_t,b_t}(j)e_{b_t}
=
s_jw_{a_t}+r_je_{b_t},
\]
and hence in either case
\[
v_h\cdot B_{\sigma_j}^T\theta
=
r_j\omega_1+s_j\omega_2
\]
for some fixed $\omega_1,\omega_2\in\mathbb{T}$, independent of $j$.
Lemma~\ref{lem:dmp-planar} therefore shows that at most two values of $j$ satisfy
\[
\operatorname{dist}\bigl(
v_h\cdot B_{\sigma_j}^T\theta,
\mathbb{T}^1[q]\setminus\{0\}
\bigr)
<\gamma'.
\]
There are $K^{\ell-1}$ choices for the remaining coordinates, so
\[
\#\left\{
\sigma \in \Sigma :
\operatorname{dist}\bigl(
v_h\cdot B_\sigma^T\theta,
\mathbb{T}^1[q]\setminus\{0\}
\bigr)<\gamma'
\right\}
\le
2K^{\ell-1}
=
\frac{2}{K}|\Sigma|.
\]
This proves \eqref{eq:6.1.2}, and therefore \eqref{eq:6.1}, as desired.
\end{proof}

We continue to use the decomposition \(\mathbb R^d=\mathbb R^4\oplus\mathbb R^m\), applying Lemma~\ref{lem:four-dimensional-kernel} on the first quaternionic block. The next lemma averages over finitely many rational rotations in order to recover the \(d\)-dimensional spherical transform.
We do this since Lemma~\ref{lem:four-dimensional-kernel} can only be applied to such a quaternionic block and more importantly
because Lemma~\ref{lem:four-dimensional-kernel} also does not generalize to general $d\ge 5$ due to its use of the integral quaternions \(\mathbb H_{\mathbb Z}\) to transport all of a fixed $W\subseteq \Z^4 \backslash 
\{0\}$ (although it does work for $d=8$ by instead using the integral octonions).

Let $\Pi$ and $\Pi^\perp$
denote the coordinate projections onto the first four and last $d-4$
coordinates, on both $\R^d$ and $\T^d$.  For an odd prime $p$ and $x\in X_p$, put
\[
 S_x:=\mathsf L_x\oplus pI_{d-4}.
\]
Then $S_x\in M_d(\Z)$ and $S_x^TS_x=p^2I_d$.  For $a\in\Z^d$ and
$\theta\in\T^d$, define
\[
 \mathcal K_{p,a}(\theta)
 :=\frac1{|X_p|}\sum_{x\in X_p}e^{2\pi i\theta\cdot S_xa}.
\]
This extends the notation of Section~\ref{sec:four-dimensional-arithmetic}; the quaternionic pairing identity gives
\begin{equation}\label{eq:four-plane-kernel-factorization}
 \mathcal K_{p,a}(\theta)
 =e^{2\pi ip\Pi^\perp(\theta)\cdot\Pi^\perp(a)}
 M_p\!\left(\mathsf R_{\Pi(a)}^T\Pi(\theta)\right).
\end{equation}
For comparison with the spherical transform, put
\[
 \begin{aligned}
 \Xi_a(u)
 &:=\int_{S^3}
 e^{2\pi i u\cdot(\mathsf R_{\Pi(a)}\omega,\Pi^\perp(a))}\,d\sigma_4(\omega)\\
 &=e^{2\pi i\Pi^\perp(u)\cdot\Pi^\perp(a)}
 \Omega_4\!\left(\|\Pi(a)\|\,\|\Pi(u)\|\right).
 \end{aligned}
\]
Since
$\|(\mathsf R_{\Pi(a)}\omega,\Pi^\perp(a))\|=\|a\|$, normalized Haar
measure $dO$ on $SO(d)$ gives
\[
 \int_{SO(d)}\Xi_a(O^Tu)\,dO
 =\Omega_d(\|a\|\,\|u\|)
 \qquad(u\in\R^d).
\]

We now prove the following, which will be our higher dimensional substitute for Lemma~\ref{lem:four-dimensional-kernel}.

\begin{lemma}\label{lem:four-plane-averaging}
Let $d\ge5$, let $W\subseteq\Z^d\setminus\{0\}$ be finite with
$\Pi(a)\ne0$ for every $a\in W$, and let $0<\delta<1$.  Then there exist
positive integers $J,h,Q_0$, a constant $C_0>0$, and matrices
$R_1,\ldots,R_J\in M_d(\Z)$ satisfying
\[
 R_j^TR_j=h^2I_d\qquad(1\le j\le J),
\]
such that
for every sufficiently large odd prime $p$, every $a\in W$, and every
$\theta\in\T^d$ with
\[
 \operatorname{dist}\!\left(\theta,
 \mathfrak R_{Q_0}^d\setminus\{0\}\right)\ge\frac{C_0}{p},
\]
we have
\[
 \left|\overline{\mathcal K}_{p,a}(\theta)
 -\Omega_d\!\left(hp\,\|a\|\operatorname{dist}(\theta,0)\right)\right|<\delta,
\]
where
\[
 \overline{\mathcal K}_{p,a}(\theta)
 :=\frac1J\sum_{j=1}^J\mathcal K_{p,a}(R_j^T\theta)
 \qquad(p\text{ odd prime},\ a\in\Z^d,\ \theta\in\T^d).
\]
\end{lemma}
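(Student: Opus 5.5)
The plan is to approximate the Haar average $\int_{SO(d)}\Xi_a(O^Tu)\,dO=\Omega_d(\|a\|\|u\|)$ by a finite average over rational rotations, and then compare each term $\mathcal K_{p,a}(R_j^T\theta)$ with $\Xi_a$ using the four-dimensional estimates of Section~\ref{sec:lattice-sphere-estimates}.

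\textbf{Step 1: choose the rotations.} Since $SO(d,\Q)$ is dense in $SO(d)$ (Cayley transform), and $u\mapsto\Omega_d(\|a\|\|u\|)$ is bounded and uniformly continuous, I would pick $O_1,\dots,O_J\in SO(d,\Q)$ with
\[
\Bigl|\frac1J\sum_{j=1}^J\Xi_a(O_j^Tu)-\Omega_d(\|a\|\|u\|)\Bigr|<\delta/3
\]
uniformly for $a\in W$ and $\|u\|\le T$. Here $T$ is chosen so that $|\Omega_4(t)|<\delta/3$ for $t\ge T\min_{a\in W}\|\Pi(a)\|$, and also $|\Omega_d(t)|<\delta/3$ for $t\ge T$. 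This uses equidistribution of a fine net together with uniform Lipschitz bounds in $u$. Let $h$ be a common denominator, and put $R_j:=hO_j\in M_d(\Z)$, so that $R_j^TR_j=h^2I_d$.

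\textbf{Step 2: small $\theta$.} Take $\theta$ with $\operatorname{dist}(\theta,0)\le C_0/p$, and let $\theta_0$ be its minimal representative. Put $u=hp\,\theta_0$, so that $R_j^T\theta_0=pO_j^T u/p\cdot$ (suitably scaled). By \eqref{eq:four-plane-kernel-factorization},
\[
\mathcal K_{p,a}(R_j^T\theta)=e^{2\pi i p\Pi^\perp(R_j^T\theta_0)\cdot\Pi^\perp(a)}\,M_p\bigl(\mathsf R_{\Pi(a)}^T\Pi(R_j^T\theta_0)\bigr).
\]
The argument of $M_p$ has the form $v/p$ with $\|v\|\le hC_0\|a\|$. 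Lemma~\ref{lem:major-zero} therefore gives $\mathcal K_{p,a}(R_j^T\theta)\approx\Xi_a(O_j^T u)$ uniformly in $a$ and $j$. Averaging over $j$ and applying Step 1 (when $\|u\|\le T$), or using the smallness of both sides (when $\|u\|>T$), yields the bound $<\delta$. To make this step work, I would take $C_0$ so large that $hC_0\ge T$ suffices for the tail case.

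\textbf{Step 3: large $\theta$.} Now suppose $\operatorname{dist}(\theta,0)>C_0/p$. I want every term to be small. The phase factor has modulus $1$, so it is enough to show $|M_p(\mathsf R_{\Pi(a)}^T\Pi(R_j^T\theta))|<\delta/2$ using $A_p(Q)$ from Lemma~\ref{lem:minor}. This requires
\[
\operatorname{dist}\bigl(\mathsf R_{\Pi(a)}^T\Pi(R_j^T\theta),\mathfrak R_Q^4\bigr)\ge Q/p.
\]

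\textbf{Main obstacle.} The hardest step is a denominator-tracking argument that lower-bounds this distance in terms of the hypothesis, in the style of the proof of Lemma~\ref{lem:four-dimensional-kernel}. The trouble is that the projection $\Pi$ loses information: $\Pi(R_j^T\theta)$ can be near a rational point, or near $0$, even when $\theta$ is not. Using $\mathsf R_{\Pi(a)}$ and $R_j$ with $R_jR_j^T=h^2I$ handles the rescaling, since preimages of $\T^4[s]$ lie in torsion of order $s\|\Pi(a)\|^2h^2$. The projection must be handled separately.

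I would first try to bypass Lemma~\ref{lem:minor} in four variables and instead bound the $d$-dimensional sum $\sum_{x}e^{2\pi i\theta\cdot S_xa}$ directly. The idea is to split according to whether $\Pi(R_j^T\theta)$ is far from $\mathfrak R^4_Q$: if it is, Lemma~\ref{lem:minor} applies; otherwise, the closeness forces $\Pi(R_j^T\theta)$ to be near a rational point.

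If that fails, the fallback is to require that the rotations $O_j$ are generic enough that, for each $\theta$ outside the $C_0/p$-neighbourhood of $\mathfrak R^d_{Q_0}\setminus\{0\}$, only a proportion $<\delta$ of the $j$ have $\Pi(R_j^T\theta)$ within $C_0/p$ of $\mathfrak R^4_{Q}$. The bad terms are then absorbed into the error, since each has modulus at most $1$. This is analogous to the counting in Lemma~\ref{lem:dmp-planar}. I expect this genericity/counting condition on the $R_j$ to be the real difficulty, and I would adjust $Q_0$ (taking it to be a multiple of $Qh^2\max\|\Pi(a)\|^2$) to make it hold.
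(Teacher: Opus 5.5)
There is a genuine gap, and it sits at the step you flag as the real difficulty and leave unproved. The first moves are fine: approximating $\int_{SO(d)}\Xi_a(O^Tu)\,dO=\Omega_d(\|a\|\,\|u\|)$ by finitely many rational rotations, setting $R_j=hO_j$, and near $\theta=0$ matching $\mathcal K_{p,a}(R_j^T\theta)$ with $\Xi_a(O_j^Tu)$ via \eqref{eq:four-plane-kernel-factorization} and Lemma~\ref{lem:major-zero}.

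\textbf{Where it fails.} For a single $j$, $|\mathcal K_{p,a}(R_j^T\theta)|=|M_p(\mathsf R_{\Pi(a)}^T\Pi(R_j^T\theta))|$ is close to $1$ whenever $\Pi(R_j^T\theta)$ is within $o(1/p)$ of $0$. Since $\Pi R_j^T$ has a $(d-4)$-dimensional kernel, this happens for $\theta$ near a positive-dimensional subtorus, which contains points far from every bounded-denominator rational point. So no termwise bound can work, neither your Step~3 nor your first option.

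Everything therefore rests on showing that only a small proportion of indices can be bad at once. You offer no mechanism for this beyond an analogy with Lemma~\ref{lem:dmp-planar}. The $Q_0$ you propose only records denominators coming from the similitudes $\mathsf R_{\Pi(a)}$ and $R_j$. The former is already handled by Lemma~\ref{lem:four-dimensional-kernel} applied to $\Pi(W)$, and neither says anything about the non-injective projection $\Pi$.

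The same issue breaks Step~2. There $\|u\|$ runs up to $hC_0\ge T$. For $T<\|u\|\le hC_0$ the finite average is not small merely because $\|u\|$ is large: $|\Xi_a(O_j^Tu)|=|\Omega_4(\|\Pi(a)\|\,\|\Pi(O_j^Tu)\|)|$, and $\|\Pi(O_j^Tu)\|$ can be tiny. Your threshold on $\Omega_4$ only helps if $\|\Pi(O_j^Tu)\|$ is bounded below by a multiple of $\|u\|$ for most $j$, and that has to be arranged.

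\textbf{The missing idea: a spanning condition on the four-planes.} Put $r_0=\lceil d/4\rceil$. Choose rational rotations $O_1,\ldots,O_L$, with $(r_0-1)/L$ small, such that any $r_0$ of the planes $O_\ell(\R^4\times\{0\})$ span $\R^d$. Obtain equidistribution by composing with a common finite family $H_k\in SO(d,\Q)$, which preserves spanning, and set $R_{k,\ell}=hH_kO_\ell$. This fixes both problems.

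\emph{Large $\|u\|$.} Compactness gives $\rho>0$ such that, for every $v\ne0$, all but $r_0-1$ indices satisfy $\|\Pi(O_\ell^Tv)\|\ge\rho\|v\|$. This makes the finite average small for large $\|u\|$ and yields \eqref{eq:6.2.ML} on all of $\R^d$, which repairs Step~2.

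\emph{Counting bad indices.} Stack any $r_0$ of the maps $\Pi R_{k,\ell}^T$ into an integer matrix $A$. It has rank $d$, so it has a rational left inverse $\Lambda$; choose $D$ with $D\Lambda$ integral. Suppose those $r_0$ projections of a lift $\widetilde\theta$ lie within $C_1/p$ of points of $\mathfrak R^4_{Q_1}$, collected in $z$. Then $\|\widetilde\theta-\Lambda z\|<\|\Lambda\|r_0C_1/p$, with $\Lambda z\in(D\,Q_1!)^{-1}\Z^d$. This is what dictates $Q_0=D\,Q_1!$ and $C_0>\|\Lambda\|r_0C_1$.

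Consequently, if $\theta$ is at distance at least $C_0/p$ from $\mathfrak R^d_{Q_0}$, each $k$-block has at most $r_0-1$ bad indices. The remaining terms are small by Lemma~\ref{lem:four-dimensional-kernel}, and the bad ones cost at most $(r_0-1)/L$.
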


\begin{proof}
We first choose rational rotations $O_\ell$ whose four-planes satisfy a
spanning property. This property will be used twice: to control small
projections at large frequencies, and to show that simultaneous rational
approximation of sufficiently many projections forces $\theta$ close to a
rational point. A second family $H_k$ will approximate Haar averaging on a
bounded region. Applying the same $H_k$ to all the four-planes in a block
preserves the spanning property. These two families will give a uniform
continuous approximation, which we then combine with
Lemma~\ref{lem:four-dimensional-kernel}.

Put $\varepsilon:=\delta/5$.
Put $r_0:=\lceil d/4\rceil$, and choose $L$ so large that
$(r_0-1)/L<\varepsilon$.  By genericity and the density of
$SO(d,\Q)$ in $SO(d)$ \cite[Proposition~4.1]{djokovic2009rational}, choose
rational rotations $O_1,\ldots,O_L$ so that every $r_0$ of the four-planes
$O_\ell(\R^4\times\{0\})$ span $\R^d$.
Then, by compactness there exists some $\rho>0$ such that for every
nonzero $v\in\R^d$, at most $r_0-1$ indices $\ell$ satisfy
\[
 \|\Pi(O_\ell^Tv)\|<\rho\|v\|.
\]
Choose $T\ge1$ so large that, for every $a\in W$ and $t\ge T$,
\[
 |\Omega_d(\|a\|t)|<\varepsilon,
 \qquad
 |\Omega_4(\|\Pi(a)\|\rho t)|<\varepsilon.
\]

We now average over a common rotation of these four-planes.
Right invariance of Haar measure gives
\[
 \int_{SO(d)}\frac1L\sum_{\ell=1}^L
 \Xi_a(O_\ell^TH^Tu)\,dH
 =\Omega_d(\|a\|\,\|u\|).
\]
By compactness and uniform continuity, the integral on the left can be
approximated uniformly for all $a\in W$ and $\|u\|\le T$ by a finite average.  Since $SO(d,\Q)$ is dense in $SO(d)$, the rotations
may be chosen rational.  Thus choose $H_1,\ldots,H_M\in SO(d,\Q)$ such that
\[
 \left|\frac1{ML}\sum_{k=1}^M\sum_{\ell=1}^L
 \Xi_a(O_\ell^TH_k^Tu)-\Omega_d(\|a\|\,\|u\|)\right|<\varepsilon
 \qquad(a\in W,\ \|u\|\le T).
\]

The spanning property of the $O_\ell$ extends this approximation from
$\|u\|\le T$ to all of $\R^d$.
Indeed, if $\|u\|>T$, then for
each fixed $k$ at most $r_0-1$ indices $\ell$ satisfy
$\|\Pi(O_\ell^TH_k^Tu)\|<\rho\|u\|$.  For every remaining $\ell$,
\[
 |\Xi_a(O_\ell^TH_k^Tu)|<\varepsilon.
\]
Thus
\[
 \left|\frac1{ML}\sum_{k=1}^M\sum_{\ell=1}^L
 \Xi_a(O_\ell^TH_k^Tu)\right|
 \le\frac{r_0-1}{L}+\varepsilon<2\varepsilon,
\]
while $|\Omega_d(\|a\|\,\|u\|)|<\varepsilon$.  Consequently
\begin{equation}\label{eq:6.2.ML}
 \left|\frac1{ML}\sum_{k=1}^M\sum_{\ell=1}^L
 \Xi_a(O_\ell^TH_k^Tu)-\Omega_d(\|a\|\,\|u\|)\right|<3\varepsilon
 \qquad(a\in W,\ u\in\R^d).
\end{equation}

Choose a common denominator $h$ for the finitely many products $H_kO_\ell$
and set
\[
 R_{k,\ell}:=hH_kO_\ell\in M_d(\Z).
\]
Then $R_{k,\ell}^TR_{k,\ell}=h^2I_d$.  Put $J:=ML$ and enumerate these
matrices as $R_1,\ldots,R_J$.

We next use the same spanning property to control projections near
bounded-denominator rational points. Apply Lemma~\ref{lem:four-dimensional-kernel} to
$\Pi(W)\subseteq\Z^4\setminus\{0\}$ with $\varepsilon$, obtaining
$C_1>0$ and a positive integer $Q_1$.  Increasing $Q_1$ if necessary,
assume that $Q_1\ge3$, and increasing $C_1$ if necessary, assume also that
\[
 |\Omega_4(t)|<\varepsilon\qquad(t\ge C_1).
\]

For an odd prime $p$ and
$\theta\in\T^d$, call $(k,\ell)$ major if
\[
 \operatorname{dist}\!\left(
 \Pi(R_{k,\ell}^T\theta),\mathfrak R_{Q_1}^4\right)
 <\frac{C_1}{p}.
\]
This definition includes proximity to zero. We will use local approximation
there, and bound the number of major projections when $\theta$ is away
from every bounded-denominator rational point.
Fix $k$ and $I=\{\ell_1,\ldots,\ell_{r_0}\}\subseteq[L]$, and put
\[
 A_{k,I}:=
 \begin{pmatrix}
  \Pi R_{k,\ell_1}^T\\
  \vdots\\
  \Pi R_{k,\ell_{r_0}}^T
 \end{pmatrix}
 \in M_{4r_0\times d}(\Z).
\]
If $A_{k,I}v=0$, then $H_k^Tv$ is orthogonal to each of the four-planes
\[
 O_{\ell_r}(\R^4\times\{0\}),\qquad 1\le r\le r_0.
\]
These planes span $\R^d$, so $v=0$.  Hence $A_{k,I}$ has rank $d$, and
we may choose $\Lambda_{k,I}\in M_{d\times4r_0}(\Q)$ such that
\[
 \Lambda_{k,I}A_{k,I}=I_d.
\]

Suppose that every $\ell\in I$ is major.  For any lift
$\widetilde\theta\in\R^d$ of $\theta$, there is a vector
$z_I\in\Q^{4r_0}$, each of whose four-dimensional blocks represents a
point of $\mathfrak R_{Q_1}^4$, such that, by the triangle inequality
\[
 \|A_{k,I}\widetilde\theta-z_I\|
 <r_0\frac{C_1}{p}.
\]
It follows that
\[
 \|\widetilde\theta-\Lambda_{k,I}z_I\|
 =
 \|\Lambda_{k,I}(A_{k,I}\widetilde\theta-z_I)\|
 <\|\Lambda_{k,I}\| r_0 \frac{C_1}{p}.
\]
Put $q:=Q_1!$ and choose a positive integer $D$ such that every
$D\Lambda_{k,I}$ is integral. Each block of $z_I$ lies in $q^{-1}\Z^4$,
so $\Lambda_{k,I}z_I\in(Dq)^{-1}\Z^d$. Thus we may take
$Q_0:=Dq\ge Q_1$, ensuring that all these images in $\T^d$ lie in
$\mathfrak R^d_{Q_0}$.
Choose \(C_0>C_1\) so large that
\[
\|\Lambda_{k,I}\|r_0C_1<C_0
\]
for every \(k,I\), and \(|\Omega_d(t)|<\varepsilon\) for \(t\ge C_0\). It follows that
\begin{equation}\label{eq:four-plane-return}
 \#\{\ell:(k,\ell)\text{ is major}\}\ge r_0
 \quad\Longrightarrow\quad
 \operatorname{dist}(\theta,\mathfrak R_{Q_0}^d)
 <\frac{C_0}{p}
 \qquad(1\le k\le M).
\end{equation}

Consequently, if $\operatorname{dist}(\theta,\mathfrak R_{Q_0}^d)\ge C_0/p$,
then each $k$-block contains at most $r_0-1$ major projections.
We now combine this bound with \eqref{eq:6.2.ML} to prove the desired
estimate. Near zero we use the continuous approximation; away from zero
both the averaged kernel and the spherical transform will be small.
Take $p$ sufficiently large that
Lemma~\ref{lem:four-dimensional-kernel} applies to $\Pi(W)$ with
$\varepsilon$ and
\[
 \frac{hC_0+C_1}{p}<\frac1{Q_1}.
\]
Let $a\in W$ and let $\theta$ satisfy the hypothesis of the lemma.

First suppose $\operatorname{dist}(\theta,0)\le C_0/p$.  
Choose the representative \(\theta_0\in\mathbb R^d\) of \(\theta\) with
\[
\|\theta_0\|=\operatorname{dist}(\theta,0).
\]
For each \(k,\ell\), since \(R_{k,\ell}^TR_{k,\ell}=h^2I_d\),
\[
\|R_{k,\ell}^T\theta_0\|
=h\|\theta_0\|
\le \frac{hC_0}{p}.
\]
Thus $\Pi(R_{k,\ell}^T\theta_0)$ is a lift of
$\Pi(R_{k,\ell}^T\theta)$ of norm at most $hC_0/p$.
Every nonzero point of $\mathfrak R^4_{Q_1}$ has distance at least
$1/Q_1$ from zero, so the projected point is at distance at least
$1/Q_1-hC_0/p>C_1/p$ from the nonzero rational points.
Moreover, the lift has norm less than $1/2$ and hence realizes the torus
distance to zero. Lemma~\ref{lem:four-dimensional-kernel} and
\eqref{eq:four-plane-kernel-factorization} therefore give
\[
\left|
\mathcal K_{p,a}(R_{k,\ell}^T\theta)
-
\Xi_a(pR_{k,\ell}^T\theta_0)
\right|
<\varepsilon.
\]
Since
\[
pR_{k,\ell}^T\theta_0
=
O_\ell^TH_k^T(hp\theta_0),
\]
averaging over \(k,\ell\) and applying \eqref{eq:6.2.ML} at \(hp\theta_0\) gives
\[
\left|
\overline{\mathcal K}_{p,a}(\theta)
-
\Omega_d\bigl(hp\|a\|\operatorname{dist}(\theta,0)\bigr)
\right|
<4\varepsilon<\delta.
\]

Suppose instead that $\operatorname{dist}(\theta,0)>C_0/p$.  Together with the
hypothesis of the lemma this gives
\[
 \operatorname{dist}(\theta,\mathfrak R_{Q_0}^d)\ge\frac{C_0}{p}.
\]
By \eqref{eq:four-plane-return}, for every fixed $k$, at most $r_0-1$
indices $\ell$ are major.  For every nonmajor pair,
$0\in\mathfrak R_{Q_1}^4$ gives
\[
p\|\Pi(a)\|\operatorname{dist}\!\left(\Pi(R_{k,\ell}^T\theta),0\right)
 \ge\|\Pi(a)\|C_1\ge C_1.
\]
Lemma~\ref{lem:four-dimensional-kernel} and the choice of $C_1$ therefore give
\[
 |\mathcal K_{p,a}(R_{k,\ell}^T\theta)|<2\varepsilon.
\]
The major terms have absolute value at most $1$, so
\[
 |\overline{\mathcal K}_{p,a}(\theta)|
 <2\varepsilon+\frac{r_0-1}{L}<3\varepsilon.
\]
Moreover
$hp\|a\|\operatorname{dist}(\theta,0)>hC_0\|a\|\ge C_0$, so the
corresponding $\Omega_d$ term has absolute value less than $\varepsilon$.  Hence the desired
difference is less than $4\varepsilon<\delta$.
\end{proof}

We can now combine Lemma~\ref{lem:primary-frames-high} and Lemma~\ref{lem:four-plane-averaging} to obtain our high dimensional transference theorem.

\begin{theorem}\label{thm:radial-transfer}
Let $d\ge5$, let $W\subseteq\Z^d\setminus\{0\}$ be finite, let
$c_\infty\in\R$, and let $c_w\in\R$ for $w\in W$.  Suppose
\begin{equation}\label{eq:high-transfer-radial}
 F(\tau):=c_\infty+\sum_{w\in W}c_w\Omega_d(\|w\|\tau)\ge0
 \qquad(\tau\ge0).
\end{equation}
Then for every $\varepsilon>0$ there exists a positive integer $m$, an injective homomorphism
\[
 E=m^{-1}I_d:\Z^d\longrightarrow\Q^d\subseteq\R^d
\]
and a finite symmetric family $\mathcal G$ of injective homomorphisms
$\Gamma:\Z^d\to\Z^d$, 
such that
\[
 \|E\Gamma w\|=\|w\|
 \qquad(w\in\Z^d,\ \Gamma\in\mathcal G),
\]
and
\begin{equation}\label{eq:high-transfer-positive}
 F_\varepsilon(\theta):=c_\infty+\varepsilon+
 \frac1{|\mathcal G|}\sum_{\Gamma\in\mathcal G}\sum_{w\in W}c_w
 e^{2\pi i\Gamma w\cdot\theta}>0
 \qquad(\theta\in\T^d).
\end{equation}
\end{theorem}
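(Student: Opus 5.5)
The plan is to copy the proof of Theorem~\ref{thm:radial-transfer-four}. Lemma~\ref{lem:four-plane-averaging} replaces Lemma~\ref{lem:four-dimensional-kernel}, and the frames $B_\sigma$ of Lemma~\ref{lem:primary-frames-high} replace the planar filter $B_j$.

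\textbf{Preliminary reduction.} As before, letting $\tau\to\infty$ in \eqref{eq:high-transfer-radial} gives $c_\infty\ge0$. Put $\Lambda:=\sum_{w}|c_w|$ and fix $0<\delta<1$ with $2\delta\Lambda<\varepsilon$.

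Lemma~\ref{lem:four-plane-averaging} needs $\Pi(a)\ne0$ for every $a\in W$, so I first rotate $W$. The set of $O\in SO(d)$ with $\Pi(Ow)=0$ for some $w\in W$ is a finite union of proper closed subvarieties. By density of $SO(d,\Q)$ I can therefore pick a rational rotation $O$ avoiding it. Let $N$ be a common denominator, so $P:=NO\in M_d(\Z)$ with $P^TP=N^2I_d$. Put $W':=PW$ and $c'_{Pw}:=c_w$. Since $\|Pw\|=N\|w\|$, the function $F'(\tau):=c_\infty+\sum c'_{w'}\Omega_d(\|w'\|\tau)=F(N\tau)\ge0$, and $\Pi(w')\ne0$ for all $w'\in W'$.

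\textbf{Parameter choices and the family.} The order of choices matters.
\begin{itemize}
\item Apply Lemma~\ref{lem:four-plane-averaging} to $W'$ and $\delta$, obtaining $J,h,Q_0,C_0$ and the matrices $R_j$.
\item Choose $K$ with $2d/K<\delta$ and apply Lemma~\ref{lem:primary-frames-high} with $Q=Q_0$ and this $K$, obtaining $\lambda,\gamma,\Sigma$ and the $B_\sigma$.
\item Choose an odd prime $p$ large enough that Lemma~\ref{lem:four-plane-averaging} applies and $C_0/p<\gamma$.
\end{itemize}
Then take the family $\Gamma_{\sigma,j,x}:=\pm B_\sigma S_xR_jP$ for $\sigma\in\Sigma$, $j\le J$, $x\in X_p$. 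Including both signs makes the family symmetric, and replacing $\Gamma$ by $-\Gamma$ only conjugates the corresponding terms, so it does not affect the estimates below. Each factor is an integral similitude, of ratios $\lambda,p,h,N$ respectively. So with $m:=\lambda phN$ and $E=m^{-1}I_d$, every $\Gamma$ is injective and $\|E\Gamma w\|=\|w\|$.

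\textbf{Positivity.} Fix $\theta\in\T^d$ and set $\theta_\sigma:=B_\sigma^T\theta$. Define
\[
L_\sigma:=c_\infty+\sum_{w'\in W'}c'_{w'}\,\overline{\mathcal K}_{p,w'}(\theta_\sigma),
\]
whose average over $\sigma$ is exactly $F_\varepsilon(\theta)-\varepsilon$, by $\theta\cdot B_\sigma S_xR_jw'=(S_xR_j)^{\!T}$-duality; the sign symmetrization at most replaces this by its real part.
\begin{itemize}
\item \emph{Good $\sigma$.} Since $\mathfrak R^d_{Q_0}\subseteq\T^d[Q_0!]$, \eqref{eq:6.1} shows that all but a proportion at most $2d/K<\delta$ of the $\sigma$ satisfy $\operatorname{dist}(\theta_\sigma,\mathfrak R^d_{Q_0}\setminus\{0\})\ge\gamma>C_0/p$. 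For these, Lemma~\ref{lem:four-plane-averaging} gives $|L_\sigma-F'(hp\operatorname{dist}(\theta_\sigma,0))|\le\delta\Lambda$, hence $L_\sigma\ge-\delta\Lambda$.
\item \emph{Bad $\sigma$.} Using $|\overline{\mathcal K}|\le1$ and $c_\infty\ge0$, we get $L_\sigma\ge-\Lambda$.
\end{itemize}
Averaging over $\sigma$ gives $F_\varepsilon(\theta)\ge\varepsilon-2\delta\Lambda>0$.

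\textbf{Expected obstacle.} I expect the main difficulty to be bookkeeping rather than new analysis. The point needing care is that the nondegeneracy hypothesis $\Pi(a)\ne0$ must be arranged by the preliminary rational rotation $P$ without breaking integrality or the exact norm identity. A second check is that the denominators in $Q_0$ coming from Lemma~\ref{lem:four-plane-averaging} are fed into Lemma~\ref{lem:primary-frames-high} before $\gamma$, and hence $p$, are chosen.
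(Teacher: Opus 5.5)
\textbf{Gap: the order of $S_x$ and $R_j$.} Your outline matches the paper's proof step for step: the rational rotation making $\Pi(Pw)\neq0$, Lemma~\ref{lem:four-plane-averaging} applied to $PW$, the filter of Lemma~\ref{lem:primary-frames-high} with $Q=Q_0$, a prime with $C_0/p<\gamma$, the sign index, and the good/bad split over $\sigma$. The one concrete error is that $S_x$ and $R_j$ appear in the wrong order in your family, and this breaks the identity linking the family average to $L_\sigma$. Unwinding the definitions,
\[
 \mathcal K_{p,a}(R_j^T\vartheta)=\frac1{|X_p|}\sum_{x\in X_p}e^{2\pi i\,R_j^T\vartheta\cdot S_xa}
 =\frac1{|X_p|}\sum_{x\in X_p}e^{2\pi i\,\vartheta\cdot R_jS_xa}.
\]
So $\overline{\mathcal K}_{p,w'}(B_\sigma^T\theta)$ is the $(j,x)$-average of $e^{2\pi i\theta\cdot B_\sigma R_jS_xw'}$. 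It therefore corresponds to $\Gamma=\pm B_\sigma R_jS_xP$, which is the paper's family $\xi B_\sigma R_jS_xA$. For your $\Gamma=\pm B_\sigma S_xR_jP$, the $(j,x)$-average is instead $\frac1J\sum_j\mathcal K_{p,R_jw'}(B_\sigma^T\theta)$. Here the rotations act on the frequencies, not on $\theta$, and $S_x$ and $R_j$ do not commute. So your claimed identity for $L_\sigma$ is false, and Lemma~\ref{lem:four-plane-averaging} does not apply to the quantity your family produces.

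\textbf{Why this matters.} By \eqref{eq:four-plane-kernel-factorization}, each $\mathcal K_{p,R_jw'}(\vartheta)$ depends on $\vartheta$ only through the single unrotated projection $\Pi(\vartheta)$ and a phase. Take $\Pi(\vartheta)=0$ and $\Pi^\perp(\vartheta)=\phi$ with $p\phi\in\Z^{d-4}$, where some coordinate of $\phi$ is far from all points of denominator at most $Q_0$. Then $\vartheta$ satisfies the hypothesis of Lemma~\ref{lem:four-plane-averaging}, and $\operatorname{dist}(\vartheta,0)$ is bounded below, so $\Omega_d(hp\|w'\|\operatorname{dist}(\vartheta,0))\to0$. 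Yet your averaged kernel equals $M_p(0)=1$ for every $w'$. For general $\phi$ it is a lattice exponential sum in $p\phi$ with no sign control. The filter only keeps most $B_\sigma^T\theta$ away from $\T^d[Q_0!]\setminus\{0\}$; it says nothing about their first-block projections avoiding small-denominator points. So your ``good $\sigma$'' estimate is unjustified as written. In the correct order, the several rotated projections $\Pi(R_j^T\vartheta)$ are exactly what drive the spanning argument \eqref{eq:four-plane-return}. Swapping the two factors repairs the proof, and it then coincides with the paper's.
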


\begin{proof}
Since $\Omega_d(t)\to0$ as $t\to\infty$, letting $\tau\to\infty$ in
\eqref{eq:high-transfer-radial} gives $c_\infty\ge0$.  Discarding the terms with
$c_w=0$, we may assume that $c_w\ne0$ for every $w\in W$.
We may also assume that $W$ is non-empty, otherwise the theorem is trivial.
Put
$\Lambda:=\sum_{w\in W}|c_w|$ and choose $0<\delta<1$ so that
$2\delta \Lambda<\varepsilon$.

To apply Lemma~\ref{lem:four-plane-averaging}, we first put \(W\) in a rational position in which every vector has nonzero projection onto the first four coordinates.
By the density of $SO(d,\Q)$ in $SO(d)$
\cite[Proposition~4.1]{djokovic2009rational}, we can choose an integer $\ell\ge1$ and
$A\in M_d(\Z)$ such that
\[
 A^TA=\ell^2I_d,
 \qquad
 \Pi(Aw)\ne0\quad(w\in W).
\]
Indeed, one can do this by choosing a rational rotation outside the finitely many proper algebraic
sets $\{O\in SO(d):\Pi(Ow)=0\}$, $w\in W$, and then clear denominators.
Apply Lemma~\ref{lem:four-plane-averaging} to $AW$ and $\delta$, obtaining
$J,h,R_1,\ldots,R_J,C_0,Q_0$ as in the lemma.  Choose a positive integer $K$ with $2d/K<\delta$ and apply
Lemma~\ref{lem:primary-frames-high} with $Q_0,K$, obtaining
$\lambda,\gamma,\Sigma$ and matrices $B_\sigma$ $(\sigma\in\Sigma)$.  Finally choose a
sufficiently large odd prime $p$ such that $C_0/p<\gamma$ and the conclusion
of Lemma~\ref{lem:four-plane-averaging} holds.

Although $X_p=-X_p$, the last $d-4$ coordinates of $S_x$ are unchanged when $x$ is replaced by $-x$: $S_{-x}=\mathsf L_{-x}\oplus pI_{d-4}\ne -S_x$.
We therefore include a separate sign index.
Let $\mathcal G$ be the indexed family
\[
 \Gamma_{\sigma,j,x,\xi}:=\xi B_{\sigma}R_jS_xA
 \qquad(\sigma\in\Sigma,\ 1\le j\le J,\ x\in X_p,\ \xi\in\{-1,1\}).
\]
The $\xi$-index makes $\mathcal G$ symmetric and thus $F_\varepsilon(\theta)$ real valued.
Put $E:=(\ell\lambda hp)^{-1}I_d$ (so we may take $m= \ell\lambda hp$ in the theorem).  Using
$\xi^2=1$, $B_\sigma^TB_\sigma=\lambda^2I_d$, $R_j^TR_j=h^2I_d$, 
$S_x^TS_x=p^2I_d$, and $A^TA=\ell^2I_d$ we have
\[
 \Gamma_{\sigma,j,x,\xi}^T\Gamma_{\sigma,j,x,\xi}
 =A^TS_x^TR_j^TB_\sigma^TB_\sigma R_jS_xA
 =(\ell\lambda hp)^2I_d.
\]
Thus every $\Gamma_{\sigma,j,x,\xi}$ is injective and
\[
 \|E\Gamma_{\sigma,j,x,\xi}w\|=\|w\|
 \qquad(w\in\Z^d).
\]

It remains to prove \eqref{eq:high-transfer-positive}.
Fix $\theta\in\T^d$. For $\sigma\in \Sigma$, define
\[
 L_\sigma:=c_\infty+\sum_{w\in W}c_w
 \operatorname{Re}\!\left(\overline{\mathcal K}_{p,Aw}(B_\sigma^T\theta)\right),
\]
(for $\overline{\mathcal K}_{p,Aw}$ as in Lemma~\ref{lem:four-plane-averaging}).
Since $\mathfrak R_{Q_0}^d\backslash \{0\}\subseteq\T^d[Q_0!]\backslash \{0\}$, Lemma~\ref{lem:primary-frames-high}
shows that all but a proportion less than $\delta$ of the $\sigma$ satisfy
\[
 \operatorname{dist}(B_\sigma^T\theta,\mathfrak R_{Q_0}^d\setminus\{0\})
 \ge\gamma>\frac{C_0}{p}.
\]
For every such $\sigma$, taking real parts in
Lemma~\ref{lem:four-plane-averaging} and using $\|Aw\|=\ell\|w\|$ gives
\[
 \left|L_\sigma-F\!\left(\ell hp\operatorname{dist}(B_\sigma^T\theta,0)\right)\right|
 <\delta \Lambda,
\]
and \eqref{eq:high-transfer-radial} therefore gives $L_\sigma>-\delta \Lambda$.  For the remaining $\sigma$,
$|\overline{\mathcal K}_{p,Aw}(B_\sigma^T\theta)|\le1$ and $c_\infty\ge0$, so
$L_\sigma\ge-\Lambda$.  Since the exceptional proportion is less than
$\delta$,
\[
 \frac1{|\Sigma|}\sum_{\sigma\in\Sigma}L_\sigma
 >-(1-\delta)\delta \Lambda-\delta \Lambda
 >-2\delta \Lambda>-\varepsilon.
\]
Since the \(\xi\)-symmetrization in \(\mathcal G\) takes the real part, expanding the definitions gives
\[
F_\varepsilon(\theta)
=
\varepsilon+\frac1{|\Sigma|}\sum_{\sigma\in\Sigma}L_\sigma
>0,
\]
proving 
\eqref{eq:high-transfer-positive}, as desired.
\end{proof}

\section{The main theorem}\label{sec:main-theorem}

In this section we finally prove Theorem~\ref{thm:main}.
The two parts of the proof share a common transference construction; they differ in the cone and normalization conditions checked at the end.
The converses of Theorems~\ref{prop:scalar-certificate} and~\ref{prop:chromatic-certificate} provide certificates in the measurable setting.
Theorems~\ref{thm:radial-transfer-four} and~\ref{thm:radial-transfer} then transfer these certificates into the ordinary setting with Theorems~\ref{thm:lattice-bound} and~\ref{thm:lattice-colouring} certifying the new certificates.

\begin{proof}[Proof of Theorem~\ref{thm:main}]
As discussed, we have the inequalities
\[
\overline\alpha_D(\Q^d) \ge \overline\alpha_D(\R^d) \ge 
m_D(\R^d)
 \qquad\text{and}\qquad
 \chi_D(\Q^d)\le  \chi_D(\R^d) \le \chi_D^{(m)}(\R^d).
\]
The inequality $m_D(\R^d)\le \overline\alpha_D(\R^d)$ was observed by Larman and Rogers \cite{larmanrogers1972} with a simple averaging argument, while the others are immediate from their definitions.

It remains to prove the reverse inequalities.
We shall do this by transferring the measurable certificates provided by
Theorems~\ref{prop:scalar-certificate} and~\ref{prop:chromatic-certificate}.
In the independence case the common notation will be $c=z_0$ and $M_q=Z_q$; in the colouring case it will be $c=z$ and $M_q=S_{Z_q}$.
We first transfer the radial inequality shared by these certificates, and then check their respective cone and normalization conditions.
Thus, suppose that finite nonempty sets
$U_1,\ldots,U_\ell\subseteq\Q^d$, symmetric matrices $M_q$ indexed by
$U_q$, and real numbers $c,a_1,\ldots,a_t$ satisfy
\begin{equation}\label{eq:7.1}
 c-\sum_{q=1}^\ell\tr (M_q)+\sum_{s=1}^t a_s\Omega_d(d_s\tau)
 -\sum_{q=1}^\ell\sum_{\substack{u,v\in U_q\\u\ne v}}
    M_q(u,v)\Omega_d(\|u-v\|\tau)\ge0
 \qquad(\tau\ge0).
\end{equation}
By scaling, we may assume that $d_1^2,\ldots,d_t^2\in \Z$ and $U_1, \ldots , U_\ell \subseteq \Z^d$.

Let
\[
 c_\infty:=c-\sum_{q=1}^\ell\tr (M_q)\ge0,
\]
where the inequality follows from \eqref{eq:7.1} by letting $\tau\to\infty$.  
By Lagrange's four-square theorem choose
$w_s\in\Z^d$, supported on the first four coordinates, with
$\|w_s\|=d_s$.  
Let 
\[
W:=
\{w_s:s\in[t]\}
\cup
\{v-u:q\in[\ell],\ u,v\in U_q,\ u\ne v\}
\subseteq\mathbb Z^d\setminus\{0\}.
\]
Collecting equal nonzero vectors among the $w_s$ and $v-u$ gives the following equivalent form of~\eqref{eq:7.1}
\begin{equation}\label{eq:7.2}
 c_\infty+\sum_{w\in W}c_w\Omega_d(\|w\|\tau)\ge0,
 \qquad(\tau\ge0).
\end{equation}

Fix $\varepsilon>0$.
If $d=4$, then apply
Theorem~\ref{thm:radial-transfer-four} to \eqref{eq:7.2}
and put $r:=8$,
otherwise, if $d\ge5$, apply Theorem~\ref{thm:radial-transfer} to \eqref{eq:7.2} and put $r:=d$.
Let $E$ and $\mathcal G$ be the resulting injective homomorphism and finite symmetric family of injective homomorphisms given by either Theorem~\ref{thm:radial-transfer-four} or Theorem~\ref{thm:radial-transfer} 
depending on whether $d=4$.
Let
\[
C_\varepsilon := \{ \Gamma w_s : \Gamma \in \mathcal{G}, s\in [t] \} \subseteq \Z^r \backslash \{0\}.
\]
By symmetry of $\mathcal{G}$, we have that $C_\varepsilon=-C_\varepsilon$.

Since $\|E\Gamma w_s\|=d_s$ for every $\Gamma\in \mathcal{G}$, $s\in [t]$, we have that $E$ is an
injective graph homomorphism from
\[
 G_\varepsilon:=\Cay(\Z^r,C_\varepsilon)
\]
into the $D$-distance graph on $\R^d$, and therefore
\begin{equation}\label{eq:main-graph-monotonicity}
 \overline\alpha_D(\R^d)\le\overline\alpha(G_\varepsilon),
 \qquad
 \chi_D(\R^d) \ge \chi(G_\varepsilon).
\end{equation}
When $d\ge5$, Theorem~\ref{thm:radial-transfer} moreover gives
$E(\Z^d)\subseteq\Q^d$, so in this case
\begin{equation}\label{eq:main-graph-monotonicity-rational}
 \overline\alpha_D(\Q^d)\le\overline\alpha(G_\varepsilon),
 \qquad
 \chi_D(\Q^d) \ge \chi(G_\varepsilon).
\end{equation}
So, we aim to apply Theorems~\ref{thm:lattice-bound} and~\ref{thm:lattice-colouring} to $G_\varepsilon$.

First collect the coefficients of all pairs $(\Gamma,s)$ that give the same generator, so that their total contribution is preserved. Define
\[
 b_v:=\frac{1}{|\mathcal{G}|}
 \sum_{\substack{\Gamma \in \mathcal{G},\ s\in[t]\\ v=\Gamma w_s}}a_s
 \qquad(v\in C_\varepsilon).
\]
By symmetry of $\mathcal{G}$, we have that $b_v=b_{-v}$. 
Moreover,
\[
 \frac{1}{|\mathcal{G}|}
 \sum_{\Gamma \in \mathcal{G}}\sum_{s=1}^t
 a_se^{2\pi i\Gamma w_s\cdot\theta}
 =\sum_{v\in C_\varepsilon}b_ve^{2\pi i v\cdot\theta},
 \qquad
 \sum_{v\in C_\varepsilon}b_v=\sum_{s=1}^t a_s.
\]
Next relabel each finite constraint along the injective map $\Gamma$ and divide by $|\mathcal G|$, so that averaging preserves the sum of their traces.
For $q\in [\ell]$, $\Gamma \in \mathcal{G}$ put
\[
 U_{q,\Gamma}:=\Gamma(U_q),\qquad
 M_{q,\Gamma}(\Gamma(u),\Gamma(v)):=\frac{1}{|\mathcal{G}|} M_q(u,v).
\]
Then,
\[
c_\infty=c-\sum_{q=1}^\ell\tr (M_q) =  
c-
\sum_{\Gamma \in \mathcal{G}}\sum_{q=1}^\ell\tr (M_{q,\Gamma} ).
\]
The same relabelling and averaging will preserve the normalization quantities needed in the two final applications.
Moreover, by the definition of $M_{q,\Gamma}$,
\[
 \frac{1}{|\mathcal{G}|}
 \sum_{\Gamma \in \mathcal{G}}
 \sum_{q=1}^\ell
 \sum_{\substack{u,v\in U_q\\u\ne v}}
 M_q(u,v)e^{2\pi i\Gamma(v-u)\cdot\theta}
 =
 \sum_{\Gamma \in \mathcal{G}}
 \sum_{q=1}^\ell
 \sum_{\substack{u,v\in U_{q,\Gamma}\\u\ne v}}
 M_{q,\Gamma}(u,v)e^{2\pi i(v-u)\cdot\theta}.
\]
Combining the preceding identities with \eqref{eq:four-transfer-positive} of Theorem \ref{thm:radial-transfer-four}  when \(d=4\), or with \eqref{eq:high-transfer-positive} of Theorem~\ref{thm:radial-transfer} when \(d\ge5\), gives
\begin{equation}\label{eq:main-transfer-identity}
 c+\varepsilon-\sum_{\Gamma \in \mathcal{G}}\sum_{q=1}^\ell\tr (M_{q,\Gamma})
 +\sum_{v\in C_\varepsilon}b_ve^{2\pi i v\cdot\theta}
 -\sum_{\Gamma \in \mathcal{G}}\sum_{q=1}^\ell
      \sum_{\substack{u,v\in U_{q,\Gamma}\\u\ne v}}M_{q,\Gamma}(u,v)
       e^{2\pi i(v-u)\cdot\theta}>0
 \qquad(\theta\in\T^r).
\end{equation}
From here we complete the proof of the independence and chromatic parts of the theorem separately.

We begin with the independence half.
Fix $\varepsilon>0$ and use the converse of
Theorem~\ref{prop:scalar-certificate} to choose a certificate with
\[
 z_0<m_D(\R^d)+\varepsilon.
\]
Apply the preceding construction with $c=z_0$ and $M_q=Z_q$.
Since $\BQC^*$ is preserved by relabelling and positive scaling,
$M_{q,\Gamma}\in\BQC^*(U_{q,\Gamma})$, and we also have that
\[
 \sum_{\Gamma\in\mathcal G}
 \langle M_{q,\Gamma},J_{U_{q,\Gamma}}\rangle
 =\langle Z_q,J_{U_q}\rangle.
\]
Together with $\sum_{v\in C_\varepsilon}b_v=\sum_{s=1}^t a_s$
and \eqref{eq:main-transfer-identity}, the hypotheses of
Theorem~\ref{thm:lattice-bound} are satisfied with
$z_0+\varepsilon$ in place of $z_0$.  Hence
\[
 \overline\alpha(G_\varepsilon)
 \le z_0+\varepsilon
 <m_D(\R^d)+2\varepsilon.
\]
Using \eqref{eq:main-graph-monotonicity} when $d=4$ and
\eqref{eq:main-graph-monotonicity-rational} when $d\ge5$, then letting
$\varepsilon\downarrow0$ and using the inequalities at the beginning
of the proof, gives
\[
 \overline\alpha_D(\R^4)=m_D(\R^4)
\]
and, for $d\ge5$,
\[
 \overline\alpha_D(\Q^d)
 =\overline\alpha_D(\R^d)
 =m_D(\R^d).
\]

We now handle the colouring half similarly.
Choose $k:=\chi_D^{(m)}(\R^d)-1$. Clearly $k\ge 2$.
By the converse of
Theorem~\ref{prop:chromatic-certificate}, choose a certificate and then
choose $\varepsilon>0$ so that
\[
 z+\varepsilon-\frac1{k-1}\sum_{s=1}^t a_s
 +\sum_{q=1}^{\ell}A_{Z_q}<0.
\]
Apply the preceding construction with $c=z$ and $M_q=S_{Z_q}$.
For $q\in[\ell]$ and $\Gamma\in\mathcal G$, define the matrix
$Z_{q,\Gamma}$ indexed by $U_{q,\Gamma}\times[k]$ by
\[
 Z_{q,\Gamma}\bigl((\Gamma u,i),(\Gamma v,j)\bigr)
 :=\frac1{|\mathcal G|}Z_q\bigl((u,i),(v,j)\bigr).
\]
Since \(\BQC^*\) is preserved by relabelling and positive scaling, $
Z_{q,\Gamma}\in\BQC^*(U_{q,\Gamma}\times[k])$.
By the definitions of \(S_Z\) and \(A_Z\), we also have
\[
S_{Z_{q,\Gamma}}=M_{q,\Gamma},
\qquad
\sum_{\Gamma\in\mathcal G}A_{Z_{q,\Gamma}}=A_{Z_q}.
\]
Together with $\sum_{v\in C_\varepsilon}b_v=\sum_{s=1}^t a_s$
and \eqref{eq:main-transfer-identity}, the hypotheses of
Theorem~\ref{thm:lattice-colouring} are satisfied with
$z+\varepsilon$ in place of $z$. Hence
\[
 \chi(G_\varepsilon)>k, \qquad \text{and so} \qquad
 \chi(G_\varepsilon) \ge \chi_D^{(m)}(\R^d).
\]
The theorem now follows from \eqref{eq:main-graph-monotonicity} when $d=4$ and
\eqref{eq:main-graph-monotonicity-rational} when $d\ge5$.
\end{proof}

\section{High-dimensional Euclidean spaces}\label{sec:high-dimensional}

We prove Theorem~\ref{thm:intro-asymptotic} by establishing
\begin{equation}\label{eq:base-two-bound}
 m_1(\R^d)\le(2+o(1))^{-d}
 \qquad\text{and there}\qquad
 \chi^{(m)}(\R^d)\ge(2+o(1))^d.
\end{equation}
The ordinary chromatic bound then follows from Theorem~\ref{thm:main}.
The key observation is that a unit-distance-avoiding set has no overlap
with any of its translates through distance $1$. After compact
approximation, the same holds for an interval of distances around $1$.
The average overlap, and hence all its derivatives, then vanish there.
The following theorem turns these identities into a density bound. It will
also give the numerical bounds in Section~\ref{sec:mone-bounds}.
Recall that $\Omega_d$ averages a Fourier exponential over the unit sphere.
We write $\partial_r^j$ for differentiation $j$ times with respect to $r$,
with the other variables held fixed.

\begin{theorem}\label{lem:base-two-certificate}
Let $H(s)$ be a finite real linear combination of the functions
\[
 \left.\partial_r^j\Omega_d\!\left(\frac{rs}{2\pi}\right)\right|_{r=1},
 \qquad j=0,1,2,\ldots,
\]
normalized by $H(0)=1$. If $H(s)\ge-B$ for all $s\ge0$, where $B>0$,
then
\[
 m_1(\R^d)\le\frac{B}{1+B},\qquad
 \chi^{(m)}(\R^d)\ge\left\lceil1+\frac1B\right\rceil.
\]
\end{theorem}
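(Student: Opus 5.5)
We prove this by the standard autocorrelation (Delsarte/Lovász-type) argument, with the derivative terms handled through the observation stated before the theorem. Let $A\subseteq\R^d$ be a measurable set avoiding distance $1$ with upper density $\delta>0$. After the usual compact/periodic approximation we may work with a periodic (or compactly truncated, then averaged) avoiding set. The compactness step is where we exploit the fact that unit-distance avoidance upgrades to avoidance of an interval of distances $(1-\eta,1+\eta)$ up to an arbitrarily small loss of density. The periodic case is simplest: the autocorrelation $f(x):=\overline\delta(A\cap(A-x))$ is then a genuine positive-definite continuous function. We set $\phi(r):=\int_{S^{d-1}} f(r\omega)\,d\sigma_d(\omega)$. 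This is the radial average, with $\phi(0)=\delta$.

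Bochner's theorem gives a nonnegative spectral measure $\mu$ on $\R^d$ (a discrete one in the periodic case), with $\mu(\{0\})\ge\delta^2$ and $\mu(\R^d)=\delta$. By rotation invariance,
\[
 \phi(r)=\int \Omega_d(r\|\xi\|)\,d\mu(\xi).
\]
With the normalization of the statement, in which $H$ is built from $\Omega_d(rs/2\pi)$, the natural variable is $s=2\pi\|\xi\|$. After possibly rescaling $\xi$, we write $\phi(r)=\int\Omega_d(rs/2\pi)\,d\nu(s)$ for the pushforward measure $\nu$ on $[0,\infty)$.

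The key step is to show that $\phi^{(j)}(1)=0$ for every $j\ge0$. Once $A$ is known to avoid all distances in an open interval around $1$, the function $f$ vanishes on the annulus $1-\eta<\|x\|<1+\eta$, so $\phi$ vanishes on $(1-\eta,1+\eta)$. Differentiation under the integral is justified because $\nu$ is finite and $\Omega_d$ is smooth with bounded derivatives of all orders (it is a Bessel-type function). This gives
\[
 0=\int \partial_r^j\Omega_d(rs/2\pi)\big|_{r=1}\,d\nu(s).
\]
Taking the linear combination defining $H$, we obtain $\int H\,d\nu=0$.

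Now we split off the atom at $s=0$. Since $H(0)=1$ and $H\ge-B$, we get
\[
 0=\int H\,d\nu\ge \nu(\{0\})-B\bigl(\nu[0,\infty)-\nu(\{0\})\bigr)\ge \delta^2-B(\delta-\delta^2).
\]
This yields $\delta(1+B)\le B$, i.e. $\delta\le B/(1+B)$. Letting the approximation error tend to zero gives the density bound. The chromatic bound follows from $\chi^{(m)}\ge 1/m_1$ in \eqref{eq:mlealpha-base}, giving $\chi^{(m)}\ge (1+B)/B=1+1/B$, and integrality supplies the ceiling.

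The main obstacle is the approximation step. We must pass from an arbitrary measurable avoiding set to one with a bona fide positive-definite autocorrelation that avoids a whole interval of distances, with only $o(1)$ loss in density. The plan is as follows. First restrict to a large box and take a compact subset of nearly full measure (by inner regularity). A compact unit-avoiding set has its distance set closed and missing $1$, so it misses a neighbourhood of $1$. Then periodize with gaps larger than $2$, as in the remark preceding Theorem~\ref{thm:lattice-bound}, to obtain a periodic set avoiding $(1-\eta,1+\eta)$ with density close to $\delta$. A secondary point is that the $j$-th derivative identity requires the interchange of $\partial_r^j$ with the $\nu$-integral, which is immediate from uniform bounds on the derivatives of $\Omega_d(rs/2\pi)$ near $r=1$ for $s$ in the (discrete) support; if needed, the boundedness of $H$ below on all of $[0,\infty)$ together with finiteness of $\nu$ controls this.
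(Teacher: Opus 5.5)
\textbf{Genuine gap: the termwise differentiation is not justified.} Your outline follows the paper's proof closely: compact approximation, periodization so that an interval $(1-\eta,1+\eta)$ of distances is avoided, Fourier expansion of the autocorrelation with $a_0=\delta^2$ and $\sum_\xi a_\xi\le\delta$, vanishing of all radial derivatives at $r=1$, and splitting off the zero frequency. The problem is the step where you differentiate under the sum, and the reason you give for it is false. By the chain rule
\[
 \partial_r^j\,\Omega_d\!\left(\frac{rs}{2\pi}\right)
 =\left(\frac{s}{2\pi}\right)^{j}\Omega_d^{(j)}\!\left(\frac{rs}{2\pi}\right).
\]
Nothing compensates the factor $s^j$. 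The envelope of this function is of order $s^{\,j-(d-1)/2}$, so it is unbounded once $j>(d-1)/2$.

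Bounded derivatives of $\Omega_d$ together with finiteness of $\nu$ therefore do not dominate the differentiated series. You would need moment bounds such as $\sum_\xi a_\xi\|\xi\|^{\,j-(d-1)/2}<\infty$. For the indicator of an arbitrary compact periodic set, Parseval only gives $\sum_\xi a_\xi<\infty$. Without such bounds, $\sum_\xi a_\xi H(2\pi\|\xi\|)$ need not converge, and nothing identifies it with $\sum_j c_j\phi^{(j)}(1)=0$. Your fallback does not help either: the lower bound $H\ge-B$ gives no domination of the individual derivative series near $r=1$.

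\textbf{The missing idea is the paper's mollification.} Convolve the indicator of the periodic set with a nonnegative smooth bump of integral $1$, supported in a ball of radius less than $\eta/2$. The resulting periodic $g$ has the properties you need:
\begin{itemize}
\item It satisfies $0\le g\le1$ and has the same mean. So still $a_0=\delta^2$, and $\sum_\xi a_\xi=F(0)\le\delta$, now using $g^2\le g$.
\item The product $g(y)g(y+x)$ vanishes whenever $\|x\|$ lies in a slightly smaller interval around $1$. So the spherical average of $F$ still vanishes near $r=1$.
\item Its Fourier coefficients are those of the indicator multiplied by those of the bump. Hence $a_\xi$ decays faster than any power of $\|\xi\|$.
\end{itemize}
Every differentiated series then converges uniformly near $r=1$, termwise differentiation of all orders is legitimate, and your concluding inequality goes through verbatim.

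Without smoothing, your argument survives only when every $j$ occurring in $H$ satisfies $j\le(d-1)/2$. Those derivative functions are bounded, so the Weierstrass M-test with $\sum_\xi a_\xi<\infty$ suffices. The theorem, however, allows arbitrary $j$.
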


\begin{proof}
Consider a measurable set avoiding distance $1$, with upper density
arbitrarily close to $m_1(\R^d)$. By the standard periodic approximation,
we may replace it by a periodic avoiding set with arbitrarily small loss
of density; see \cite[proof of Theorem~2.2]{bachoc2015density}.
Within one period cell, inner regularity of Lebesgue measure lets us choose
a compact subset losing arbitrarily little measure. Repeat this subset
periodically. Only finitely many neighbouring copies can contribute
distances near $1$, so their compact distance sets, which avoid $1$,
leave a uniform gap around $1$.

We now smooth the indicator of this periodic set by convolution with a
nonnegative smooth function of integral $1$ supported in a small ball.
This averages nearby translates, so the resulting periodic function
$0\le g\le1$ has the same average; if the ball is small enough, its
support still avoids a smaller interval around distance $1$.

Let $Q$ be a period cell and put
$\delta=\operatorname{vol}(Q)^{-1}\int_Qg(y)\,dy$.
The average overlap of $g$ with its translate by $x$, called its
autocorrelation, has the Fourier expansion
\[
 F(x):=\frac1{\operatorname{vol}(Q)}\int_Qg(y)g(y+x)\,dy
      =\sum_\xi a_\xi e^{2\pi i\xi\cdot x}.
\]
The sum runs over the Fourier frequencies of $g$, and the $a_\xi$ are
the squared absolute values of its Fourier coefficients. Hence
\[
 a_\xi\ge0,\qquad a_0=\delta^2,\qquad
 \sum_\xi a_\xi=F(0)\le\delta,
\]
where the last inequality uses $g^2\le g$.
If $\|x\|$ lies in the distance gap, the product $g(y)g(y+x)$ is zero
for every $y$. Thus $F$ vanishes on an annulus around the unit sphere.
Its spherical average therefore satisfies
\[
 \sum_\xi a_\xi\Omega_d(r\|\xi\|)=0
 \qquad\text{for $r$ near $1$}.
\]
Smoothness permits termwise differentiation of every order. Taking
the derivatives and linear combination that define $H$, we obtain
\[
 0=\sum_\xi a_\xi H(2\pi\|\xi\|)
   \ge\delta^2-B\sum_{\xi\ne0}a_\xi
   \ge\delta^2-B(\delta-\delta^2).
\]
For $\delta>0$, division gives $\delta\le B/(1+B)$.
Taking the density approximations proves the density bound; its
reciprocal gives the chromatic bound, since in a measurable $q$-colouring
some colour class has upper density at least $1/q$.
\end{proof}

In particular, we may take $H(s)=P(s^2)\Omega_d(s/(2\pi))$ for a real
polynomial $P$ with $P(0)=1$.
For a unit vector $\omega$, the negative sum of the second coordinate
derivatives of $e^{isx\cdot\omega}$ is $s^2e^{isx\cdot\omega}$.
Averaging over $\omega$ gives a radial function; writing that sum in
radial coordinates gives the identity
\[
 \left(-\partial_r^2-\frac{d-1}{r}\partial_r\right)
 \Omega_d\!\left(\frac{rs}{2\pi}\right)
 =s^2\Omega_d\!\left(\frac{rs}{2\pi}\right).
\]
By iteration and evaluation at $r=1$, this identity shows that each such $H$ is
a finite linear combination of the derivatives in the
theorem. It therefore suffices to construct such a polynomial with
$B\le2^{-d+o(d)}$.

Put $\nu=d/2-1$. In terms of the Bessel function $J_\nu$ of the first
kind, our Fourier normalization gives
\begin{equation}\label{eq:base-two-transform}
 \Omega_d\!\left(\frac{s}{2\pi}\right)
 =\Gamma(\nu+1)\left(\frac2s\right)^\nu J_\nu(s),
 \qquad \Omega_d(0)=1.
\end{equation}
We shall multiply this function by a polynomial in $s^2$. Placing a
root at each Bessel zero up to $2\nu$ will cancel its sign changes on
that interval. The next lemma counts these prescribed roots and
describes where they lie; both facts will be needed when choosing the
remaining roots.

\begin{lemma}\label{lem:base-two-bessel}
Let $j_1<j_2<\cdots$ be the positive zeros of $J_\nu$.
For every continuous $f:[0,1]\to\R$,
\begin{equation}\label{eq:base-two-zero-distribution}
 \frac1\nu\sum_{j_\ell\le2\nu}
       f\!\left(\frac{\nu^2}{j_\ell^2}\right)
 \longrightarrow
 \int_{1/4}^1 f(t)\frac{\sqrt{1-t}}{2\pi t^{3/2}}\,dt
 \qquad(\nu\to\infty).
\end{equation}
\end{lemma}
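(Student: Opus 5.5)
We need to prove the zero distribution of Bessel zeros $j_{\nu,\ell}\le 2\nu$: the normalized count measure of $\nu^2/j_\ell^2$ converges weakly to density $\sqrt{1-t}/(2\pi t^{3/2})$ on $[1/4,1]$.

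The plan is to reduce the statement to a uniform asymptotic for the zero-counting function $N_\nu(x):=\#\{\ell: j_\ell\le x\}$ in the range $x=\nu y$ with $y\in[1,2]$. The target is
\[
 N_\nu(\nu y)=\nu\,\Phi(y)+o(\nu),\qquad \Phi(y):=\frac1\pi\Bigl(\sqrt{y^2-1}-\arccos(1/y)\Bigr),
\]
uniformly for $y\in[1,2]$, while $N_\nu(\nu y)=0$ for $y<1$ because $j_{\nu,1}>\nu$. Granting this, the lemma follows by a change of variables. The empirical measures $\mu_\nu:=\nu^{-1}\sum_{j_\ell\le2\nu}\delta_{j_\ell/\nu}$ on $[1,2]$ have distribution functions converging uniformly to $\Phi$. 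Since $\Phi$ is continuous with $\Phi(1)=0$, $\mu_\nu$ converges weakly to $\Phi'(y)\,dy=\pi^{-1}\sqrt{y^2-1}\,y^{-1}\,dy$. No mass escapes at the right endpoint $y=2$, since $\Phi$ is continuous there and consecutive zeros are spaced about $\pi$ apart, so $O(1)$ zeros lie near $2\nu$, which is negligible after dividing by $\nu$.

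Pushing forward under $y\mapsto t=1/y^2$ (so $y=t^{-1/2}$, $dy=-\tfrac12 t^{-3/2}dt$) turns $\pi^{-1}\sqrt{y^2-1}\,y^{-1}dy$ into
\[
 \frac1\pi\sqrt{\tfrac{1-t}{t}}\;t^{1/2}\cdot\tfrac12 t^{-3/2}\,dt=\frac{\sqrt{1-t}}{2\pi t^{3/2}}\,dt
\]
on $[1/4,1]$, which is exactly the claimed limit in \eqref{eq:base-two-zero-distribution}. Weak convergence against continuous $f$ on $[0,1]$ is then just integration of $f$ composed with this continuous map.

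For the counting asymptotic, I would use the Debye (or Langer/Olver uniform) asymptotics for $J_\nu(\nu y)$ with $y>1$ fixed:
\[
 J_\nu(\nu\sec\beta)=\sqrt{\tfrac{2}{\pi\nu\tan\beta}}\Bigl(\cos\bigl(\nu(\tan\beta-\beta)-\tfrac\pi4\bigr)+O(\nu^{-1})\Bigr),
\]
uniformly for $\beta\in[\beta_0,\pi/3]$ with $\beta_0>0$. Its zeros interlace with those of the cosine, giving $N_\nu(\nu\sec\beta)=\pi^{-1}\nu(\tan\beta-\beta)+O(1)$, and $\tan\beta-\beta=\sqrt{y^2-1}-\arccos(1/y)$. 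Alternatively, one can use the classical Sturm comparison: $u(x)=\sqrt{x}J_\nu(x)$ solves $u''+(1-(\nu^2-\tfrac14)/x^2)u=0$. Sturm comparison on short intervals shows that the number of zeros in $[\nu a,\nu b]$, for $1<a<b$, equals $\pi^{-1}\int_{\nu a}^{\nu b}\sqrt{1-\nu^2/x^2}\,dx+O(1+(b-a)^{-1}\cdots)$. This avoids citing Olver and yields $\nu\int_a^b\pi^{-1}\sqrt{1-1/y^2}\,dy$ up to $o(\nu)$.

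The main obstacle is the transition region near $y=1$, where neither expansion is uniform. There I only need an upper bound: the number of zeros in $[\nu,\nu(1+\eta)]$ is at most $\nu\Phi(1+\eta)+o(\nu)$, which is $O(\eta^{3/2}\nu)$. By Sturm comparison with the constant coefficient $1-(\nu^2-\frac14)/(\nu(1+\eta))^2\le 3\eta$, consecutive zeros there are at least $\pi/\sqrt{3\eta}$ apart, so there are at most $O(\eta^{3/2}\nu)+1$ zeros. Letting $\eta\to0$ after $\nu\to\infty$ closes the argument.
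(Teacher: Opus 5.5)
Your proposal is correct. Its Sturm-comparison variant is essentially the paper's argument. The paper follows the Prüfer angle of $(Y',pY)$ for $Y=\sqrt{s}\,J_\nu(s)$, using that $q(s)=1-(\nu^2-1/4)/s^2$ is positive and increasing on $[\nu,2\nu]$. It takes $p=\sqrt{q(a\nu)}$, respectively $p=\sqrt{q(b\nu)}$, to get $\theta'\ge p$, respectively $\theta'\le p$. Hence the number of zeros in $[a\nu,b\nu]$, divided by $\nu$, lies between $\frac{b-a}{\pi}\sqrt{1-a^{-2}}+o(1)$ and $\frac{b-a}{\pi}\sqrt{1-b^{-2}}+o(1)$. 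Refining a partition of $[1,2]$ then gives the density $\pi^{-1}\sqrt{1-u^{-2}}$ directly, without the closed-form distribution function $\Phi$.

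Your primary route, via the Debye expansion on $[\nu(1+\eta),2\nu]$, is genuinely different. It buys a sharper $O_\eta(1)$ error and the explicit $\Phi$, but it imports Olver-type uniform asymptotics. It also forces the separate transition-region argument near $y=1$. Your ``interlacing'' step needs either the companion expansion for $J_\nu'$ or a Sturm spacing bound to rule out extra zeros, though $o(\nu)$ accuracy is all the lemma requires.

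In the paper's framework the obstacle at $y=1$ never arises. Since $q(\nu)=1/(4\nu^2)>0$, the comparison works uniformly on all of $[\nu,2\nu]$. Your constant-coefficient bound with $3\eta$ is exactly the paper's upper bound for the first cell $[1,1+\eta]$, whose lower bound is simply $0$. So the elementary ODE comparison already suffices, and the Debye expansion is optional extra precision.
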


\begin{proof}
The standard inequality $j_1>\nu$ places all the points
$\nu^2/j_\ell^2$ in \eqref{eq:base-two-zero-distribution} in $[1/4,1]$;
see \cite[Section~10.21]{NIST:DLMF}.
The function $Y(s)=\sqrt{s}J_\nu(s)$ satisfies
\[
 Y''(s)+q(s)Y(s)=0,\qquad
 q(s)=1-\frac{\nu^2-1/4}{s^2}.
\]
For sufficiently large $\nu$, $q$ is positive and increasing on
$[\nu,2\nu]$. We count zeros on $[a\nu,b\nu]$, where
$1\le a<b\le2$, by following the angle $\theta(s)$ of the vector
$(Y'(s),pY(s))$, for a constant $p>0$. This vector never vanishes,
so its angle can be chosen continuously. Differentiation gives
\[
 \theta'(s)=p\cos^2\theta(s)+\frac{q(s)}p\sin^2\theta(s)>0.
\]
Zeros occur exactly when $\theta$ is a multiple of $\pi$, so each
increase of $\pi$ accounts for one zero, up to an endpoint error of
at most one. Taking $p=\sqrt{q(a\nu)}$ gives
$\theta'\ge p$, and taking $p=\sqrt{q(b\nu)}$ gives $\theta'\le p$.
Thus the number of zeros, divided by $\nu$, lies between
\[
 \frac{b-a}{\pi}\sqrt{1-a^{-2}}+o(1)
 \quad\text{and}\quad
 \frac{b-a}{\pi}\sqrt{1-b^{-2}}+o(1).
\]
Partition $[1,2]$, let $\nu\to\infty$, and then refine the partition.
The bounds give the zero density $\pi^{-1}\sqrt{1-u^{-2}}$ in the
variable $u=s/\nu$. Changing variables to $t=u^{-2}$ proves the claim.
\end{proof}

The factor $s^{-\nu}$ in \eqref{eq:base-two-transform} suggests a
polynomial in $s^2$ of degree $\lfloor\nu/2\rfloor$.
Lemma~\ref{lem:base-two-bessel} shows that cancelling the sign changes
leaves some roots free. We use them to control the product for $s\ge2\nu$.
The change of variable $y=\nu^2/s^2$ sends this range to $0<y\le1/4$;
we include $y=0$ to control the limit $s\to\infty$.
The next lemma places the free roots so that the corresponding polynomial
in $y$ is uniformly small on $[0,1/4]$.

\begin{lemma}\label{lem:base-two-root-placement}
Let $L$ count the zeros $j_\ell\le2\nu$ of $J_\nu$, and put
$k=\lfloor\nu/2\rfloor-L$. For all sufficiently large $\nu$, $k>0$
and there are points $t_1,\ldots,t_k\in(0,1/4)$ such that the monic
polynomial
\[
 R(y)=\prod_{\ell=1}^L\left(y-\frac{\nu^2}{j_\ell^2}\right)
       \prod_{i=1}^k(y-t_i)
\]
satisfies
\begin{equation}\label{eq:base-two-polynomial-norm}
 \max_{0\le y\le1/4}|R(y)|
 \le e^{(1-3\log2+o(1))\nu}\qquad(\nu\to\infty).
\end{equation}
\end{lemma}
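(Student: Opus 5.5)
The plan is to pass to logarithmic potentials, writing $\log|R(y)|$ as the potential of the root counting measure. The prescribed roots are handled by Lemma~\ref{lem:base-two-bessel}, and the free roots are placed as a discretisation of a suitable measure $\mu$ on $[0,1/4]$.

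\textbf{Counting the roots.} Take $f\equiv1$ in \eqref{eq:base-two-zero-distribution} and substitute $u=t^{-1/2}$. This gives
\[
 \frac L\nu\to c:=\frac1\pi\int_1^2\sqrt{1-u^{-2}}\,du=\frac{\sqrt3-\pi/3}{\pi}\approx0.218<\tfrac12 .
\]
Hence $k=\lfloor\nu/2\rfloor-L=(\beta+o(1))\nu$ with $\beta:=\tfrac12-c>0$, so $k>0$ for large $\nu$. Write $\sigma$ for the limit measure $\frac{\sqrt{1-t}}{2\pi t^{3/2}}\,dt$ on $[1/4,1]$, which has mass $c$.

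\textbf{Prescribed roots.} Fix $y\in[0,1/4]$. The function $t\mapsto\log|y-t|$ is continuous on $[1/4,1]$ except possibly at $t=1/4$ when $y=1/4$. Replace it by the truncation $\max(\log|y-t|,-M)$, which is continuous and larger. Lemma~\ref{lem:base-two-bessel} then gives
\[
 \frac1\nu\sum_\ell\log\Bigl|y-\frac{\nu^2}{j_\ell^2}\Bigr|\le\int\max(\log|y-t|,-M)\,d\sigma(t)+o(1).
\]
This holds uniformly in $y$, because the truncated family is equicontinuous in $(y,t)$; letting $M\to\infty$ slowly turns the right-hand side into $U_\sigma(y)+o(1)$, where $U_\sigma(y):=\int\log|y-t|\,d\sigma(t)$.

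\textbf{Free roots.} Choose a probability density and scale it to a measure $\mu$ of mass $\beta$ on $[0,1/4]$. Take $t_1,\dots,t_k$ to be the interior points of the $k$ cells of equal $\mu$-mass, one per cell. The standard comparison, which bounds each factor $\log|y-t_i|$ by its average over the neighbouring cells, shows
\[
 \frac1\nu\sum_i\log|y-t_i|\le U_\mu(y)+O\Bigl(\frac{\log\nu}\nu\Bigr)
\]
uniformly in $y$, provided the density of $\mu$ is bounded with at worst integrable endpoint singularities. Therefore
\[
 \frac1\nu\log\max_{[0,1/4]}|R|\le\max_{[0,1/4]}(U_\sigma+U_\mu)+o(1),
\]
and it remains to choose $\mu$ so that this maximum equals $1-3\log2$.

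\textbf{Choosing $\mu$ (the main obstacle).} I would take $\mu$ so that the total measure $\lambda=\sigma+\mu$, of mass $\tfrac12$, has constant potential on $[0,1/4]$. This is the balayage-type equilibrium problem in the external field $U_\sigma$. Its solution should be an explicit algebraic density; the guess is that $\lambda$ extends $\sigma$ analytically, so that its density has the Marchenko--Pastur-like form $\frac{\sqrt{1-t}}{2\pi t^{3/2}}\cdot g(t)$ adjusted on $[0,1/4]$. I would find it by solving the singular integral equation $\int\frac{d\lambda(t)}{y-t}=0$ on $(0,1/4)$ via the Stieltjes transform and a square-root ansatz. Two things then need checking: that $\mu\ge0$ on $[0,1/4]$, and that the constant value of $U_\lambda$, computed most easily at $y=0$ as $\int\log t\,d\lambda$, equals $1-3\log2$. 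If $\mu$ fails to be nonnegative, it suffices to exhibit some nonnegative $\mu$ of mass $\beta$ whose potential bound yields the constant. This explicit computation is where I expect the real work and risk to lie.
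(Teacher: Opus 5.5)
Your root count, the truncation of the logarithm, and the discretisation of the free roots are sound and essentially mirror the paper. The step you flag as ``the main obstacle'' is the heart of the lemma, however, and the proposal leaves it as a guess. Nothing you write produces a nonnegative measure $\mu$ of mass $\beta$ on $[0,1/4]$ with $U_\sigma+U_\mu\le 1-3\log2$ there, or explains where the constant $1-3\log2$ comes from. The fallback ``exhibit some nonnegative $\mu$ whose potential bound yields the constant'' simply restates what has to be proved. A Stieltjes-transform ansatz could in principle recover the answer, which is unique by an energy argument. As written, though, both positivity and the value of the constant are unverified, so the proof is incomplete exactly where it matters.

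The missing idea is that no equation needs solving. For $a>0$, let $\omega_a$ be the arcsine measure on $[0,a]$, with density $1/(\pi\sqrt{t(a-t)})$. It has constant potential on all of $[0,a]$:
\[
 \int\log|y-t|\,d\omega_a(t)=\log(a/4)\qquad(0\le y\le a),
\]
which follows from the substitution $t=a(1+\cos\phi)/2$. Superpose these measures by setting $\lambda:=\frac12\int_1^2\omega_{u^{-2}}\,du$. Three properties then hold at once:
\begin{itemize}
\item $\lambda$ is automatically a positive measure of mass $1/2$.
\item Every interval $[0,u^{-2}]$ contains $[0,1/4]$, so the potential of $\lambda$ on $[0,1/4]$ is identically $\frac12\int_1^2\log\frac1{4u^2}\,du=1-3\log2$.
\item For $t\in[1/4,1]$ the density of $\lambda$ is $\frac{1}{2\pi\sqrt t}\int_1^{t^{-1/2}}\frac{u\,du}{\sqrt{1-tu^2}}=\frac{\sqrt{1-t}}{2\pi t^{3/2}}$, which is exactly $\sigma$.
\end{itemize}
The parametrisation by $u=s/\nu\in[1,2]$ is what makes the part of $\lambda$ on $[1/4,1]$ reproduce the Bessel-zero density of Lemma~\ref{lem:base-two-bessel}. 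Your missing measure is therefore $\mu:=\lambda|_{[0,1/4]}$. Its density is $\frac{\sqrt{1-t}-\sqrt{1-4t}}{2\pi t^{3/2}}\ge0$, with only a $t^{-1/2}$ singularity at $0$, so your discretisation estimate applies to it. With this $\mu$ in hand, your remaining argument finishes the proof, as does the paper's softer weak-convergence-plus-truncation argument.
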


\begin{proof}
By Lemma~\ref{lem:base-two-bessel},
$L/\nu$ tends to $\pi^{-1}\int_1^2\sqrt{1-u^{-2}}\,du<1/2$,
so $k>0$ for all sufficiently large $\nu$.
Taking logarithms turns the product $R(y)$ into the sum of
$\log|y-t|$ over its roots. Place mass $1/\nu$ at each root; the total
mass tends to $1/2$, because $\deg R=\lfloor\nu/2\rfloor$.
We shall add the remaining roots so that this sum, divided by $\nu$,
has the same limiting upper bound throughout $[0,1/4]$.

For $a>0$, the useful elementary identity is
\begin{equation}\label{eq:base-two-arcsine}
 \frac1\pi\int_0^a\frac{\log|y-t|}{\sqrt{t(a-t)}}\,dt
 =\log(a/4)\qquad(0\le y\le a).
\end{equation}
The measure with density $1/(\pi\sqrt{t(a-t)})$ has total mass $1$:
the substitution $t=a(1+\cos\phi)/2$ turns its integral into
$\pi^{-1}\int_0^\pi d\phi$.
The same substitution proves \eqref{eq:base-two-arcsine}. Write
$y=a(1+\cos\theta)/2$, factor $\cos\theta-\cos\phi$, and use
$\int_0^\pi\log\sin v\,dv=-\pi\log2$ to obtain $\log a-2\log2$.
The last integral identity follows by integrating
$\log\sin(2v)=\log2+\log\sin v+\log\cos v$ on $[0,\pi/2]$
and using symmetry.

We now average half of these measures over the intervals $[0,u^{-2}]$,
where $1\le u\le2$. More explicitly, define $\mu$ by
\[
 \int f(t)\,d\mu(t)
 =\frac1{2\pi}\int_1^2\int_0^{u^{-2}}
       \frac{f(t)}{\sqrt{t(u^{-2}-t)}}\,dt\,du
\]
for every continuous $f:[0,1]\to\R$. Taking $f=1$ shows that $\mu$
has total mass $1/2$. All the intervals contain $[0,1/4]$, so
\eqref{eq:base-two-arcsine} gives
\begin{equation}\label{eq:base-two-constant-potential}
 \int\log|y-t|\,d\mu(t)
 =\frac12\int_1^2\log\frac1{4u^2}\,du
 =1-3\log2\qquad(0\le y\le1/4).
\end{equation}
On $[1/4,1]$, the measure has density
\[
 \frac1{2\pi\sqrt t}\int_1^{t^{-1/2}}
       \frac{u\,du}{\sqrt{1-tu^2}}
 =\frac{\sqrt{1-t}}{2\pi t^{3/2}},
\]
exactly the density of the forced roots in
\eqref{eq:base-two-zero-distribution}.
The remaining roots must therefore supply the restriction of $\mu$
to $[0,1/4]$. Divide that mass into $k+1$ equal parts and place a root
at each of the $k$ internal division points:
\[
 \mu([0,t_i])=\frac{i}{k+1}\mu([0,1/4])
 \qquad(1\le i\le k).
\]
These points lie in $(0,1/4)$. The zero count and
$L+k=\lfloor\nu/2\rfloor$ give $k/\nu\to\mu([0,1/4])$.
Thus, as $\nu\to\infty$, the measure placing mass $1/\nu$ at each root
approaches $\mu$. Concretely, the sum of the values of any continuous
function at the roots, with weight $1/\nu$ for each root, tends to its
integral against $\mu$: for every continuous $f:[0,1]\to\R$,
\begin{equation}\label{eq:base-two-root-convergence}
 \frac1\nu\sum_{R(t)=0}f(t)\longrightarrow\int f(t)\,d\mu(t).
\end{equation}

To prove the stated bound, choose points $y_\nu$ attaining the maximum
and consider any subsequence on which $y_\nu\to y\in[0,1/4]$.
We cannot apply \eqref{eq:base-two-root-convergence} directly to the
logarithm, which is singular at a root. Instead, fix $M>0$ and replace
it by the larger function $\max\{\log|y-t|,-M\}$.
This function is continuous on the compact rectangle
$[0,1/4]\times[0,1]$, so replacing $y_\nu$ by $y$ does not change
the limit. Equation~\eqref{eq:base-two-root-convergence} gives
\[
 \limsup_{\nu\to\infty}\frac1\nu\log|R(y_\nu)|
 \le\int\max\{\log|y-t|,-M\}\,d\mu(t).
\]
Let $M\to\infty$ and use \eqref{eq:base-two-constant-potential}.
Every sequence of maximizing points has a convergent subsequence,
so this proves the uniform bound \eqref{eq:base-two-polynomial-norm}.
\end{proof}

We now multiply the spherical average by the polynomial determined
by these roots. This cancels its sign changes up to $2\nu$, while
Lemma~\ref{lem:base-two-root-placement} controls the product beyond
that point. Theorem~\ref{lem:base-two-certificate} will then give the
required density bound.

\begin{proof}[Proof of Theorem~\ref{thm:intro-asymptotic}]
Take $L$, $k$ and $t_1,\ldots,t_k$ from
Lemma~\ref{lem:base-two-root-placement}, and set
\begin{equation}\label{eq:base-two-polynomial}
 P(z)=\prod_{\ell=1}^L\left(1-\frac{z}{j_\ell^2}\right)
       \prod_{i=1}^k\left(1-\frac{t_i z}{\nu^2}\right).
\end{equation}
Its constant term is $1$ and its degree is $\lfloor\nu/2\rfloor$.
The positive zeros of $J_\nu$ are simple, and $J_\nu$ is positive
before its first zero \cite[Section~10.21]{NIST:DLMF}. Thus the first
factors change sign together with $J_\nu(s)$; the other
factors are positive for $s\le2\nu$. Hence
\[
 P(s^2)\Omega_d\!\left(\frac{s}{2\pi}\right)\ge0
 \qquad(0\le s\le2\nu).
\]

For $s\ge2\nu$, put $y=\nu^2/s^2\in(0,1/4]$.
Every factor of $P(s^2)$ is the corresponding factor of $R(y)$
divided by $y$. Thus
\[
 P(s^2)\Omega_d\!\left(\frac{s}{2\pi}\right)
 =\Gamma(\nu+1)(2/\nu)^\nu
   y^{\nu/2-\deg P}R(y)J_\nu(s).
\]
Since $\deg P\le\nu/2$, $y\le1/4$ and $|J_\nu(s)|\le1$
\cite[(10.14.1)]{NIST:DLMF}, the polynomial certificate following
Theorem~\ref{lem:base-two-certificate} applies with
\begin{equation}\label{eq:base-two-tail-bound}
 B=\Gamma(\nu+1)(2/\nu)^\nu
       \max_{0\le y\le1/4}|R(y)|.
\end{equation}
Finally, by Stirling's formula, the prefactor in
\eqref{eq:base-two-tail-bound} is $e^{(\log2-1+o(1))\nu}$.
Combining this with
Lemma~\ref{lem:base-two-root-placement} gives
\[
 B\le e^{(-2\log2+o(1))\nu}=2^{-d+o(d)}.
\]
Theorem~\ref{lem:base-two-certificate} now gives \eqref{eq:base-two-bound},
and Theorem~\ref{thm:main} gives the ordinary chromatic bound.
\end{proof}

\section{Low-dimensional Euclidean spaces}
\label{sec:mone-bounds}

We now use Theorem~\ref{lem:base-two-certificate} to obtain explicit
bounds in fixed dimensions. For $5\le d\le24$, we combine radial
derivatives of the unit-distance constraint, choose their coefficients
numerically, and verify the resulting inequalities on the whole
frequency range.

\begin{theorem}\label{thm:mone-lowdim}
For $5\le d\le24$, the upper bounds for $m_1(\R^d)$ and the lower bounds
for $\chi^{(m)}(\R^d)$ are those in the new columns of Table~\ref{tab:previous-new-lowdim}.
The same bounds hold for $\overline\alpha(\R^d)$ and
$\chi(\R^d)$, respectively.
\end{theorem}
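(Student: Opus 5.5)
For each $5\le d\le 24$ I will build an explicit certificate for Theorem~\ref{lem:base-two-certificate}. The transfer to $\overline\alpha(\R^d)$ and $\chi(\R^d)$ is then immediate from Theorem~\ref{thm:main} with $D=\{1\}$, since $d\ge5$. Concretely, for each $d$ I take
\[
 H(s)=P(s^2)\,\Omega_d\!\left(\frac{s}{2\pi}\right),
\]
with $P$ a real polynomial of modest degree and $P(0)=1$. By the radial Laplacian identity following Theorem~\ref{lem:base-two-certificate}, such an $H$ is an admissible finite combination of the radial derivatives $\partial_r^j\Omega_d(rs/2\pi)|_{r=1}$. The task is therefore to find $P$ and $B>0$ with $H(s)\ge-B$ for all $s\ge0$. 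The density bound is then $B/(1+B)$, and the chromatic bound is $\lceil 1+1/B\rceil$. These should reproduce the table entries: for instance $d=5$ requires $B/(1+B)\le 0.042234949$ and $\lceil1+1/B\rceil\ge24$.

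To choose $P$, I follow the asymptotic construction of Theorem~\ref{thm:intro-asymptotic}. I place roots of $P(s^2)$ at the first few zeros of $J_\nu$, with $\nu=d/2-1$, so that sign changes are cancelled on an initial range. The remaining coefficients are then optimized numerically: I minimize $B$ subject to $H(s)\ge -B$ on a fine grid of $s$, which is a linear program in the coefficients of $P$ for fixed $B$, solved by bisection on $B$. Afterwards I round the coefficients to rationals, recording them as the exact certificate parameters in the supplement~\cite{MoneData}, and recompute $B$ rigorously for the rounded $P$.

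The main obstacle is certifying $H(s)\ge-B$ on all of $[0,\infty)$ rather than only on a grid. I would split into two ranges.
\begin{itemize}
 \item[(i)] \textbf{Tail.} For $s\ge S_0$, use \eqref{eq:base-two-transform} together with an explicit Bessel envelope bound, say $|J_\nu(s)|\le c_\nu s^{-1/2}$ or simply $|J_\nu|\le1$. Combined with a bound on $|P(s^2)|$, this gives $|H(s)|\le\Gamma(\nu+1)(2/s)^\nu|P(s^2)|\,|J_\nu(s)|$. When $\deg P<\nu/2$ this decays, so $S_0$ can be chosen with $|H|\le B$ beyond it. When $\deg P=\nu/2$ exactly, I instead require the $s\to\infty$ limit bound $\Gamma(\nu+1)2^\nu|\mathrm{lead}(P)|\cdot\sup|J_\nu|\le B$.
 \item[(ii)] \textbf{Compact range.} For $[0,S_0]$, use interval arithmetic on $H$. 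Each $\Omega_d$ is an explicit entire function: for odd $d$ it is elementary in $\sin$ and $\cos$, and for even $d$ it has a convergent power series with rigorous truncation error. I would subdivide $[0,S_0]$ and bound $H$ below on each cell via a Lipschitz or Taylor bound for $H'$, refining the subdivision near local minima where the margin to $-B$ is smallest.
\end{itemize}
Finally, the certified $B$ is rounded upward to give the tabulated density, and $\lceil1+1/B\rceil$ is taken with exact rational $B$.
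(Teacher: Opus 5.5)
\textbf{There is a genuine gap.} Your certificate family is too small to reproduce the tabulated numbers.

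Theorem~\ref{lem:base-two-certificate} admits every finite combination of $\partial_r^j\Omega_d(rs/2\pi)|_{r=1}$. The paper (Lemma~\ref{lem:mone-derivative-certificate}) uses the whole bounded part of this span: the functions $h_j(s)\propto s^{j-\nu}J_{\nu+j}(s)$ for $0\le j\le\lfloor(d-1)/2\rfloor$, where $(r^{-1}\partial_r)^j\Omega_d(rs/2\pi)|_{r=1}=(-\nu)^jh_j(s)$. Its coefficients are found by a linear program, and the table records the resulting optima.

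Your functions $P(s^2)\Omega_d(s/2\pi)$ only produce combinations coming from powers of the radial Laplacian. For instance, $s^2\Omega_d(s/2\pi)=d\nu h_1-\nu^2h_2$, so $h_1$ on its own (the first radial derivative) is unavailable. Since $J_\nu(s)=O(s^{-1/2})$, boundedness from below forces $2\deg P\le\nu+\tfrac12$. That leaves you roughly $\lfloor(d-1)/4\rfloor$ free parameters, against the paper's $\lfloor(d-1)/2\rfloor$. Your family is the one built for the asymptotic argument of Section~\ref{sec:high-dimensional}, not the one behind the numerical certificates.

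\textbf{The failure already occurs at $d=5$.} There $\Omega_5(s/2\pi)=3(\sin s-s\cos s)/s^3$, so the only admissible choice is $P(z)=1-bz$.
\begin{itemize}
\item For $b>0$, one has $H(s)=3(b-s^{-2})(\cos s-\sin s/s)$, whose infimum over large $s$ tends to $-3b$. This forces $B\ge3b$.
\item At the point $s=5.5$ of the first negative lobe, $\Omega_5\approx-0.0830$. This forces $B\ge0.0830(1-30.25b)$.
\item Taking $b\le0$ only makes the lobe worse.
\end{itemize}
Hence $B\gtrsim0.0452$ and $B/(1+B)\gtrsim0.0432$, which exceeds the claimed $0.042234949$. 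In the $h_j$ basis your family is forced to have $c_1=-\tfrac{10}{3}c_2$, whereas the actual $d=5$ certificate has $c_1\approx-2c_2$. You should not expect the other entries either, since the gap in free parameters grows with $d$.

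\textbf{How to fix it.} Run your linear program over all $\sum_j c_jh_j$ with $c_0=1$ and $j\le\lfloor(d-1)/2\rfloor$. Your verification plan is then essentially the paper's: exact rational coefficients, rigorous interval and Taylor bounds on a compact range, explicit Bessel bounds on the tail, and exact rational $B$. One caveat: for odd $d$ the optimal certificate's tail does not decay (in $d=5$ it is $c_2h_2\approx\tfrac43c_2\cos s$, of amplitude exactly $B$). Your tail bound must therefore control the limiting oscillation amplitude, rather than choose $S_0$ beyond which $|H|$ has decayed below $B$.
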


The numerical certificates use the following explicit derivative family.
The lemma records the only additional calculation needed to apply the
common certificate theorem.

\begin{lemma}\label{lem:mone-derivative-certificate}
Let $d\ge3$, put $\nu=d/2-1$, and define
\begin{equation}\label{eq:mone-derivative-kernels}
 h_j(s):=\frac{\Gamma(\nu+1)2^\nu}{\nu^j}
             s^{j-\nu}J_{\nu+j}(s),
 \qquad 0\le j\le\left\lfloor\frac{d-1}{2}\right\rfloor.
\end{equation}
If real coefficients $c_j$ with $c_0=1$ satisfy
\[
 H(s):=\sum_jc_jh_j(s)\ge-B\qquad(s\ge0)
\]
for some $B>0$, then the density and chromatic bounds of
Theorem~\ref{lem:base-two-certificate} hold.
\end{lemma}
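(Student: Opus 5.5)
The plan is to reduce Lemma~\ref{lem:mone-derivative-certificate} to Theorem~\ref{lem:base-two-certificate}. It suffices to show two things. First, each $h_j$ is a finite real linear combination of the functions $\partial_r^i\Omega_d(rs/2\pi)|_{r=1}$. Second, $H(0)=1$. Once these hold, the hypothesis $H\ge-B$ is exactly the hypothesis of that theorem, and its conclusions are the claimed density and chromatic bounds.

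For the normalization, I would use the small-argument behaviour $J_\mu(s)\sim (s/2)^\mu/\Gamma(\mu+1)$. This gives
\[
 h_j(s)\to \frac{\Gamma(\nu+1)2^\nu}{\nu^j}\cdot\frac{s^{2j}}{2^{\nu+j}\Gamma(\nu+j+1)}
\]
as $s\to0$. Hence $h_0(0)=1$ by \eqref{eq:base-two-transform}, while $h_j(0)=0$ for $j\ge1$, so $H(0)=c_0=1$.

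For the derivative representation, I would set $\phi(s):=\Omega_d(s/2\pi)=\Gamma(\nu+1)2^\nu s^{-\nu}J_\nu(s)$, so that $\Omega_d(rs/2\pi)=\phi(rs)$. The key tool is the standard Bessel identity
\[
 \left(\frac1s\frac{d}{ds}\right)^j\bigl(s^{-\nu}J_\nu(s)\bigr)=(-1)^js^{-\nu-j}J_{\nu+j}(s).
\]
From it, $s^{j-\nu}J_{\nu+j}(s)=(-1)^js^{2j}(s^{-1}\partial_s)^j(s^{-\nu}J_\nu)$. In the variable $r$ at $r=1$, one has $\partial_r\phi(rs)=s\phi'(rs)$. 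More generally, the operator $(r^{-1}\partial_r)^j$ applied to $\phi(rs)$ equals $s^{2j}\bigl((u^{-1}\partial_u)^j\phi\bigr)(rs)$ with $u=rs$. Evaluating at $r=1$ then gives
\[
 h_j(s)=\frac{(-1)^j}{\nu^j}\Bigl((r^{-1}\partial_r)^j\phi(rs)\Bigr)\Big|_{r=1}.
\]
Expanding $(r^{-1}\partial_r)^j$ as $\sum_{i\le j}p_{ij}(r)\partial_r^i$ with coefficients $p_{ij}(r)$ that are rational in $r$ and smooth near $r=1$, and evaluating at $r=1$, expresses $h_j$ as a finite real combination of $\partial_r^i\Omega_d(rs/2\pi)|_{r=1}$, $0\le i\le j$.

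I expect the main obstacle to be this bookkeeping: checking the chain-rule scaling so that the factor $s^{2j}$ appears exactly and no stray powers of $s$ remain. That scaling is what keeps the coefficients independent of $s$. I would also check that the proof of Theorem~\ref{lem:base-two-certificate} only needs $H$ to be such a combination, which it does, since there differentiation is termwise in $r$. The upper limit $j\le\lfloor(d-1)/2\rfloor$ and the condition $d\ge3$ (so $\nu>0$ and division by $\nu^j$ is allowed) play no further role in this reduction.
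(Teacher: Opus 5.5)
Your proposal is correct and follows essentially the same route as the paper. The paper also uses the Bessel differentiation identity to get $(r^{-1}\partial_r)^j\Omega_d(rs/(2\pi))|_{r=1}=(-\nu)^jh_j(s)$, and then expands $(r^{-1}\partial_r)^j$ into ordinary $r$-derivatives; it differs only in reading off $h_j(0)$ from the representation $h_j(s)=\frac{\Gamma(\nu+1)s^{2j}}{(2\nu)^j\Gamma(\nu+j+1)}\Omega_{d+2j}\!\left(\frac{s}{2\pi}\right)$ rather than from small-argument asymptotics of $J_\mu$, and in remarking that the range $j\le\lfloor(d-1)/2\rfloor$ keeps each $h_j$ bounded, which, as you say, is not needed for the reduction to Theorem~\ref{lem:base-two-certificate}.
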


\begin{proof}
Formula~\eqref{eq:base-two-transform}, applied in dimension $d+2j$, gives
\[
 h_j(s)=\frac{\Gamma(\nu+1)s^{2j}}
              {(2\nu)^j\Gamma(\nu+j+1)}
              \Omega_{d+2j}\!\left(\frac{s}{2\pi}\right).
\]
Thus the functions extend continuously to $0$, with $h_0(s)=\Omega_d(s/(2\pi))$,
$h_0(0)=1$ and $h_j(0)=0$ for $j>0$.
The standard estimate $J_{\nu+j}(s)=O(s^{-1/2})$ for fixed order
\cite[Section~10.17]{NIST:DLMF} shows that the chosen range of $j$
also makes every $h_j$ bounded on $[0,\infty)$.
The Bessel differentiation identity
\cite[(10.6.6)]{NIST:DLMF} gives
\[
 \left.(r^{-1}\partial_r)^j
       \Omega_d\!\left(\frac{rs}{2\pi}\right)\right|_{r=1}
       =(-\nu)^jh_j(s).
\]
Expanding $(r^{-1}\partial_r)^j$ and evaluating at $r=1$ gives a
finite linear combination of ordinary radial derivatives. Thus $H$
is in the family of Theorem~\ref{lem:base-two-certificate}, and
$H(0)=c_0=1$, so the theorem applies.
\end{proof}

For each dimension $5\le d\le24$, the computational supplement
\cite{MoneData} gives exact rational coefficients $c_j$, with $c_0=1$,
and the positive rational parameter $B$ used in each certificate.
A standalone Python~3 verifier, using only the standard library, checks
that the function $H$ in Lemma~\ref{lem:mone-derivative-certificate}
satisfies $H(s)\ge-B$ for every $s\ge0$. It also checks the normalization
and the exact density and integer bounds.

The verification covers $[0,1]$, $[1,T]$ and $[T,\infty)$, with $T$
chosen for each certificate. A rational lower bound handles the first
interval; Taylor's formula, with bounds on all numerical errors and on
its remainder, covers the middle interval; and explicit estimates
control the entire infinite tail. The estimates, their derivations
and the verification code are given in the supplement.

The coefficients were found by a linear program imposing the inequality
at finitely many frequencies, with further constraints added at local
minima, and then rounded to exact rational decimals.
Their global verification establishes the hypothesis of
Lemma~\ref{lem:mone-derivative-certificate}, which gives the measurable
bounds in the new columns of Table~\ref{tab:previous-new-lowdim}. Theorem~\ref{thm:main} gives
the same bounds for the ordinary parameters, proving
Theorem~\ref{thm:mone-lowdim}.

\section{Colouring Euclidean spaces}\label{sec:euclidean-upper-bound}

We now prove Theorem~\ref{thm:intro-upper-bound} with a surprisingly simple
probabilistic construction. We remark that the resulting colouring is Borel.

\begin{proof}[Proof of Theorem~\ref{thm:intro-upper-bound}]
We first construct a random periodic set that avoids a small interval
around distance $1$. Independent copies will cover a finite grid;
slightly enlarging them will then cover all of $\R^d$.
For $d\ge2$, put
\[
\varepsilon=d^{-1/2},\qquad r=\frac{1-\varepsilon}{2}.
\]
Then $0<\varepsilon<1$ and $r>0$.

To construct the set, let $C$ consist of finitely many distinct centres
in $[0,2)^d$, repeated with period $2\Z^d$. Place an open ball of
radius $r$ around each centre. We trim these balls so that the retained
pieces are separated by more than $1+\varepsilon$. For each pair of
centres, the two cutting planes are parallel to their perpendicular
bisector and lie at distance $(1+\varepsilon)/2$ on opposite sides of it.
Around each $a\in C$, retain the piece
\begin{equation}\label{eq:euclidean-upper-cell}
A_a=\left\{a+x:
 \|x\|<r,\quad
 x\cdot\frac{b-a}{\|b-a\|}
 <\frac{\|b-a\|-1-\varepsilon}{2}
 \text{ for every }b\in C\setminus\{a\}
\right\}.
\end{equation}
Write $A=\bigcup_{a\in C}A_a$. Only centres within distance $2$ can
cut a radius-$r$ ball, since $1+\varepsilon+2r=2$.
Each piece is therefore an open ball intersected with finitely many
open half-spaces, so $A$ is open and periodic.

Two points in one piece have distance less than $2r=1-\varepsilon$.
For points $a+x\in A_a$ and $b+y\in A_b$ in different pieces, put
$s=\|b-a\|$ and $u=(b-a)/s$. The two opposing cuts give
\[
x\cdot u<\frac{s-1-\varepsilon}{2},\qquad
y\cdot u>-\frac{s-1-\varepsilon}{2},
\]
and hence
\[
((b+y)-(a+x))\cdot u>1+\varepsilon.
\]
Thus the pieces are disjoint, and $A$ avoids every distance in
$[1-\varepsilon,1+\varepsilon]$. In particular, its open
$\varepsilon/3$-neighbourhood avoids distance $1$, because moving two
points by less than $\varepsilon/3$ changes their distance by less than
$2\varepsilon/3$.

We now bound the region in which another centre would remove a given
point. Fix $a+x$ with $\|x\|<r$. A centre at $a+su$, where $s>0$ and
$\|u\|=1$, removes this point precisely when
\[
s\le1+\varepsilon+2x\cdot u.
\]
This describes a region of forbidden centre displacements contained in
the ball of radius $2$. We bound its volume by enclosing it in a cylinder.
Rotate coordinates so that $x=\|x\|e_1$, and put $t=u_1$.
The distance of $su$ from the $e_1$-axis is $s\sqrt{1-t^2}$.
Using $2\|x\|<1-\varepsilon$, we obtain
\[
\begin{aligned}
s\sqrt{1-t^2}
&\le(1+\varepsilon+2\|x\|t)\sqrt{1-t^2}\\
&\le(1+\varepsilon)(1+|t|)\sqrt{1-t^2}\\
&\le(1+\varepsilon)\frac{3\sqrt3}{4}.
\end{aligned}
\]
The last inequality follows by maximizing
$(1+t)\sqrt{1-t^2}$ on $[0,1]$; its maximum is attained at $t=1/2$.
The first coordinate of $su$ lies in $[-2,2]$, since $\|su\|<2$.
Thus the forbidden region lies in a cylinder about the $e_1$-axis of
height $4$ and radius $(1+\varepsilon)3\sqrt3/4$.
Writing $v_j$ for the volume of the unit ball in $\R^j$, we bound the
forbidden volume by
\begin{equation}\label{eq:euclidean-upper-volume}
W_d:=4v_{d-1}\left(\frac{3\sqrt3}{4}(1+\varepsilon)\right)^{d-1}
\le v_d\left(\frac{3\sqrt3}{4}+o(1)\right)^d.
\end{equation}
The last inequality uses $\varepsilon\to0$ and the standard estimate
$v_{d-1}/v_d=O(\sqrt d)$, obtained from
$v_j=\pi^{j/2}/\Gamma(1+j/2)$ by Stirling's formula.

We choose the number of random centres so that any fixed point of a
ball is retained with conditional probability at least $1/2$. Take
\[
N=\left\lceil\frac{2^d}{2W_d}\right\rceil
\]
independent uniform centres in $[0,2)^d$, and let $C$ consist of
these centres together with all their translates by $2\Z^d$.
The chosen centres are distinct with probability one. Conditional on one
centre $a$ and an offset $x$ in its ball, each other random centre has a
periodic copy that removes $a+x$ with probability at most $W_d/2^d$.
The centre's own nonzero periodic copies are at distance at least $2$
and do not cut its open ball. The union bound therefore shows that
the conditional probability that $a+x$ survives is at least
\[
1-(N-1)\frac{W_d}{2^d}\ge\frac12.
\]

Now fix $z\in\R^d$. For each of the $N$ chosen centres, the probability
that one of its periodic balls contains $z$ is $v_dr^d/2^d$, and there
is at most one such ball. Conditional on this ball containing $z$, the
preceding estimate gives a probability at least $1/2$ that $z$ is
retained. Since the retained pieces are disjoint, we can sum these $N$
contributions. Thus, for every fixed $z$,
\begin{equation}\label{eq:euclidean-upper-probability}
\mathbb P(z\in A)
\ge\frac{Nv_dr^d}{2\cdot2^d}
\ge\frac{v_dr^d}{4W_d}
=:p.
\end{equation}
Since $r\to1/2$, \eqref{eq:euclidean-upper-volume} gives
\begin{equation}\label{eq:euclidean-upper-probability-base}
\frac1p\le\left(\frac{3\sqrt3}{2}+o(1)\right)^d.
\end{equation}

To turn these sets into a colouring, let $G$ be the grid of $(4d)^d$
points in $[0,2)^d$ with spacing $1/(2d)$ in each coordinate.
If we take $k$ independent copies of the random set $A$, then, by
\eqref{eq:euclidean-upper-probability}, a fixed grid point is missed
by all $k$ copies with probability at most $(1-p)^k\le e^{-kp}$.
Another union bound gives
\[
\mathbb P(\text{some grid point is missed})
\le |G|e^{-kp}.
\]
Taking
\begin{equation}\label{eq:euclidean-upper-copies}
k=\left\lceil\frac{\log|G|+1}{p}\right\rceil
\end{equation}
makes this bound at most $e^{-1}<1$.
There is therefore a choice of $k$ sets covering the grid and all its
periodic copies, with each set avoiding every distance in
$[1-\varepsilon,1+\varepsilon]$. Every point of $\R^d$ is within distance
$\sqrt d/(4d)=\varepsilon/4$ of this repeated grid, so the open
$\varepsilon/3$-neighbourhoods of these sets cover all of $\R^d$.
They avoid distance $1$, as shown above. Colouring each point by the
first neighbourhood containing it gives a proper colouring with at
most $k$ colours.  The colour classes are Borel, since each is an open
neighbourhood with the preceding neighbourhoods removed.
Since $\log|G|=d\log(4d)=O(d\log d)$,
\eqref{eq:euclidean-upper-probability-base} and
\eqref{eq:euclidean-upper-copies} give
\[
\chi(\R^d)\le\left(\frac{3\sqrt3}{2}+o(1)\right)^d,
\]
as required.
\end{proof}

\subsection*{Acknowledgements}

We thank Igor Balla, Marthe Bonamy, Margaret Bilu, Rose McCarty, Luke Peachy, Micha{\l} Pilipczuk, and Julian Sahasrabudhe for helpful discussions over the years
In particular, we are grateful to Luke Peachy and Julian Sahasrabudhe for the very helpful encouragement that an estimate such as Theorem~\ref{lem:magyar} should exist in the literature.
I also thank my Mum and Dad for making me at least 100 cups of tea while working on this paper.

\subsection*{Statement of AI use}

We used ChatGPT 5.4 to find the crucial reference \cite[Lemma 1]{magyar2007distribution} (providing Theorem \ref{lem:magyar}) needed for our approach (after failing to find such a reference for some years!).

An early stage version of the presented manuscript contained a result very similar to Theorems~\ref{thm:radial-transfer-four} and~\ref{thm:radial-transfer} without obtaining Theorem~\ref{thm:main} and with only minor support from ChatGPT 5.5.
This draft had similar but weaker applications than the results presented here.
For instance, it proved that $\chi(\R^d) \ge (1.319+o(1))^d$ and $\sup_{|D|=k}\chi_D(\R^d) > (\sqrt{e/2})^{dk}$ for $d\ge 4$. 
It also improved the lower bounds for $\chi(\R^d)$ in low dimensions $4\le d \le 24$.

Since early September we started using ChatGPT more and were able to use ChatGPT 5.6 Sol to complete the proof of Theorem~\ref{thm:main}. Its contribution here was the initial proof of the full converse of Theorem~\ref{prop:scalar-certificate}, and the initial statements and proofs of Theorems~\ref{prop:chromatic-certificate} and~\ref{thm:lattice-colouring}.
With some other very minor adjustments, this completed the proof of Theorem~\ref{thm:main}.
More recently we then used ChatGPT 6.1 Sol to improve the $\chi(\R^d)$ lower bound applications in Sections \ref{sec:high-dimensional} and \ref{sec:mone-bounds}. The initial proof of Theorem \ref{thm:intro-upper-bound} was also found by ChatGPT 6.1 Sol.

ChatGPT 5.6 Sol and 6.1 Sol were used extensively for editing the paper and providing initial drafts of parts of the paper, either from parts it proved or from detailed proof sketches provided by the author.
It was also used to spot and correct numerous minor mistakes throughout the paper as well as finding small improvements to arguments.

Due to the very recent release of \cite{openai2026plane} by OpenAI, we have posted this paper slightly earlier than we would have liked.
We apologize for this.

\bibliographystyle{amsplain}
{\small
\bibliography{EuclideanRamsey}}

\appendix

\section{Finite-point extraction}
\label{app:scalar-certificate}

In this appendix we prove the converse assertion of Theorem~\ref{prop:scalar-certificate}.
The completely-positive framework of DeCorte, de Oliveira Filho and
Vallentin~\cite{decorte2022complete} gives the thick certificate needed
below; the remaining step is to extract from it finitely many point
constraints supported on rational configurations.

For a finite set
$U\subseteq\R^d$, pairwise disjoint balls $B_u=B(u,\delta)$, $u\in U$,
and $Z\in\COP(U)$, put
\begin{equation}\label{eq:thick-radial}
 R_{U,Z,\delta}(\tau):=
 \sum_{u,v\in U} Z(u,v)\int_{B_u}\int_{B_v}
 \Omega_d(\|x-y\| \tau )\,dy\,dx.
\end{equation}
We call such a $R_{U,Z,\delta}(\tau)$ a thick radial function.
By \cite[Lemma 10.2]{decorte2022complete}, we have that $R_{U,Z,\delta}\in C_0([0,\infty))$.

The functions above are precisely the thick radial functions arising from
the copositive constraints of DeCorte, de Oliveira Filho and Vallentin
\cite[Lemmas~10.1--10.2]{decorte2022complete}; in their notation they are the functions
$r$ in \cite[(43)]{decorte2022complete}. Their Theorem~6.3 gives the exact completely
positive formulation, and \cite[(45)--(46)]{decorte2022complete} gives the corresponding
primal--dual pair for an arbitrary finite set of forbidden distances.

In Sections~10.2--10.3 of \cite{decorte2022complete} the subsequent finite-hierarchy
argument is written out after specializing to the unit-distance case.
The proof of their Theorem~10.3 carries over verbatim to finite
$D=\{d_1,\ldots,d_t\}$, since the additional moment functions
$\Omega_d(d_s\,\cdot)$ all belong to $C_0([0,\infty))$ and hence pass
through the same weak-* limit.  Their Theorem~10.4 establishes strong
duality for the corresponding unit-distance hierarchy.
Retaining these finitely many moment
constraints in the corresponding dual argument gives the following
finite-$D$ form of their certificate.

\begin{lemma}[DeCorte, de Oliveira Filho,
Vallentin~\cite{decorte2022complete}]\label{lem:finite-thick-scalar-certificate}
For every $\varepsilon>0$ there are finitely many thick radial functions
$R_1,\ldots,R_N$ and real numbers
$\lambda,a_1,\ldots,a_t$, with
\[
 \lambda<m_D(\R^d)+\varepsilon,
\]
such that
\begin{align}
\lambda+\sum_{s=1}^t a_s-\sum_{q=1}^N R_q(0)&\ge1,\label{eq:thick-cert-normalization}\\
\lambda+\sum_{s=1}^t a_s\Omega_d(d_s\tau)
 -\sum_{q=1}^N R_q(\tau)&\ge0
 \qquad(\tau\ge0).\label{eq:thick-cert-radial}
\end{align}
\end{lemma}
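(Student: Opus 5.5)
The plan is to obtain this finite-$D$ thick certificate by running the DeCorte--de Oliveira Filho--Vallentin argument with the $t$ moment constraints kept in place of the single unit-distance one. The order of steps is: primal--dual pair, finite approximation of the primal, strong duality for the truncation, and identification of the dual variables.

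\textbf{Step 1 (primal).} Start from the exact completely positive formulation \cite[Theorem~6.3]{decorte2022complete}, specialized to the forbidden set $D$. In this formulation $m_D(\R^d)$ is the supremum of $f(0)$ over continuous radial functions $f$ satisfying:
\begin{itemize}
\item[(i)] $f$ is of positive type, so by Schoenberg's theorem it is an average of $\Omega_d(\|\cdot\|\tau)$ against a nonnegative measure;
\item[(ii)] the normalization is $f(0)-\text{mean}(f)\le$ the constraint normalized by \eqref{eq:thick-cert-normalization};
\item[(iii)] $f(d_s)=0$ for each $s\in[t]$;
\item[(iv)] $f$ lies in the completely positive cone, tested against thick copositive kernels.
\end{itemize}
Writing $f$ through its spectral measure $\mu$ on $[0,\infty)$ (an atom at $0$ carries the mean), every constraint is a pairing of $\mu$ with a function in $C_0([0,\infty))$ or with constants. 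The constraints are $\Omega_d(d_s\tau)$ for (iii), and $R_{U,Z,\delta}(\tau)$ for (iv). Each $R_{U,Z,\delta}$ lies in $C_0$ by \cite[Lemma~10.2]{decorte2022complete}, and $\Omega_d(d_s\cdot)\in C_0$ since $d\ge2$.

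\textbf{Step 2 (finite approximation).} Take a countable dense family of thick copositive constraints. Let $\vartheta_N$ be the optimum of the relaxation that imposes only the first $N$ thick constraints, together with the $t$ moment constraints and the normalization. Then $\vartheta_N\ge m_D$ and $\vartheta_N$ is decreasing. Following \cite[Theorem~10.3]{decorte2022complete}, $\vartheta_N\to m_D(\R^d)$ via a weak-$*$ compactness argument: near-optimal measures $\mu_N$ have bounded mass, so a subsequence converges weak-$*$ in $C_0^*$ plus an atom. All constraint functions are in $C_0$, so every constraint passes to the limit. The limit is then feasible for the full problem and gives $\lim\vartheta_N\le m_D$.

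The extra moment functions $\Omega_d(d_s\cdot)$ are in $C_0$, so they pass through the limit exactly like the unit-distance one. This is the only point where finitely many distances differ from one, and it causes no difficulty.

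\textbf{Step 3 (duality).} Fix $N$ with $\vartheta_N<m_D+\varepsilon/2$ and dualize the truncated conic program. The dual variables are as follows:
\begin{itemize}
\item $\lambda$ for the objective/normalization;
\item $a_s$ for the $t$ equality constraints;
\item nonnegative multipliers for the $N$ copositive constraints, absorbed into the $Z$'s since $\COP$ is a cone, producing thick radial functions $R_q$.
\end{itemize}
Dual feasibility is exactly the pointwise inequality \eqref{eq:thick-cert-radial} on $\tau>0$, obtained by testing against Dirac measures at $\tau$, together with \eqref{eq:thick-cert-normalization}, obtained from the atom at $0$ using $\Omega_d(0)=1$.

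Strong duality, i.e.\ no duality gap, is the main obstacle. I would follow \cite[Theorem~10.4]{decorte2022complete} and verify a Slater-type interior condition: the primal value is finite, and a strictly feasible point exists in the appropriate sense. Equivalently, show that the moment cone $\{(\int g\,d\mu)_{g}\}$ for the finitely many test functions $g$ is closed. Closedness follows from the $C_0$ decay, which prevents mass from escaping to infinity without changing the moments, and from boundedness of the mass given the normalization. This gives a dual solution with value $<\vartheta_N+\varepsilon/2<m_D+\varepsilon$.

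If only approximate dual attainment is available, perturb $\lambda$ upward by $\varepsilon/2$. This keeps both inequalities valid, because increasing $\lambda$ only increases the left-hand sides of \eqref{eq:thick-cert-normalization} and \eqref{eq:thick-cert-radial}.
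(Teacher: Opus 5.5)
Your proposal follows the same route as the paper. The paper also takes the completely positive formulation and the finite-$D$ primal--dual pair from \cite[Theorem~6.3, (45)--(46)]{decorte2022complete}, notes that the proof of their Theorem~10.3 carries over because each $\Omega_d(d_s\,\cdot)\in C_0([0,\infty))$ passes through the weak-* limit, and dualizes the truncated hierarchy as in their Theorem~10.4 with the finitely many moment constraints kept.

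Two points in your sketch are wrong, though neither changes this structure. First, the primal objective is the mean value $M(f)=\mu(\{0\})$ under the normalization $f(0)=\mu([0,\infty))=1$, not $f(0)$. Second, $C_0$ decay does not prevent mass from escaping to infinity; it is exactly what lets $\int\Omega_d(d_s\tau)\,d\mu$ and $\int R_q\,d\mu$ vanish while the total mass stays $1$. So the mass constraint is not weak-* closed in Step~2, and the moment cone over $[0,\infty)$ need not be closed in Step~3. The fix is to compactify to $[0,\infty]$ and discard any mass at $\infty$, then renormalize; this is harmless because the remaining constraints are homogeneous and the objective can only increase. The zero duality gap then follows from the strictly feasible dual point $\lambda=2$, all $a_s=0$, and small positive multipliers on the $R_q$.
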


For a finite $U\subseteq\R^d$ and a symmetric matrix $Z$ indexed by $U$,
write
\[
 P_{U,Z}(\tau):=
 \sum_{u,v\in U}Z(u,v)\Omega_d(\|u-v\|\tau).
\]
We now replace finitely many thick radial functions uniformly by finite point constraints supported on rational configurations.
This is an adaptation of the finite-sampling argument from the proof of \cite[Lemma~10.1]{decorte2022complete}.

\begin{lemma}\label{lem:thick-point-approximation}
Let $R_1,\ldots,R_N$ be thick radial functions. For every $\eta>0$ there are finite sets
$U_1,\ldots,U_L\subseteq\Q^d$ and matrices
$Z_\ell\in\COP(U_\ell)$ such that
\[
 \left\|
 \sum_{q=1}^N R_q-\sum_{\ell=1}^L P_{U_\ell,Z_\ell}
 \right\|_\infty<\eta,
 \qquad
 \sum_{\ell=1}^L P_{U_\ell,Z_\ell}(0)
 =\sum_{q=1}^N R_q(0).
\]
\end{lemma}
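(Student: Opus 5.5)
The plan is to treat each thick radial function separately and to approximate it by a Riemann sum over rational sample points. Since there are finitely many $R_q$, it suffices to handle one $R=R_{U,Z,\delta}$ with error $\eta/N$ and exact value at $0$; the final family is the union of the point constraints obtained for each $q$. Write
\[
R(\tau)=\sum_{u,v\in U}Z(u,v)\int_{B_u}\int_{B_v}\Omega_d(\|x-y\|\tau)\,dy\,dx .
\]
Translate each ball to the origin, $x=u+\xi$, $y=v+\zeta$ with $\xi,\zeta\in B(0,\delta)$. Then
\[
R(\tau)=\int_{B(0,\delta)^U}P_{U(\xi),Z}(\tau)\,d\xi .
\]
Here one would like $U(\xi)=\{u+\xi_u\}$ to use an independent displacement $\xi_u$ per point. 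The cleaner route is to partition each ball $B_u$ into small cells and use the copositive matrix on the refined index set. Concretely, partition $B(0,\delta)$ up to a null set into $n$ measurable cells $Q_1,\ldots,Q_n$ of small diameter, and pick a rational point $\xi_i\in Q_i$. Take the index set
\[
V=\{(u,i):u\in U,\ i\in[n]\},\qquad \text{placed at } u_i:=u'+\xi_i,
\]
where $u'\in\Q^d$ is a rational point very close to $u$. Set
\[
Z'((u,i),(v,j)):=Z(u,v)\,|Q_i|\,|Q_j| .
\]
This is $D_wZ^{\uparrow}D_w$, where $Z^{\uparrow}$ is the block inflation of $Z$ by the all-ones blocks and $D_w$ is a positive diagonal matrix. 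Inflation and diagonal congruence both preserve $\COP$: for $x\ge0$, setting $y_u=\sum_i x_{u,i}|Q_i|$ gives $x^TZ'x=y^TZy\ge0$. The sample points are distinct because the balls are disjoint and the perturbation is small.

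The normalization at $\tau=0$ is exact. Since $\Omega_d(0)=1$, we get
\[
P_{V,Z'}(0)=\sum_{u,v}Z(u,v)|B(0,\delta)|^2=R(0),
\]
with no approximation. This is why the weights are taken to be exactly the cell volumes.

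The real work is \textbf{uniformity in $\tau\in[0,\infty)$}, and I expect it to be the main obstacle. Fix $T$. On $[0,T]$, the integrand $\Omega_d(\|x-y\|\tau)$ is uniformly Lipschitz in $(x,y)$, with constant $O(T)$. So the Riemann-sum error is at most
\[
C\,T\,(\text{mesh}+\text{rational perturbation})\sum_{u,v}|Z(u,v)|\,|B|^2 ,
\]
which can be made $<\eta/(2N)$. For $\tau\ge T$, one uses $R\in C_0$ (\cite[Lemma~10.2]{decorte2022complete}), so $|R(\tau)|<\eta/(4N)$ for large $T$. The point sum, however, does not decay: on the diagonal blocks $u=v$ it contains the terms $Z(u,u)|Q_i|^2$ with $i=j$, which stay constant. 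To fix this, refine so that $\sum_i|Q_i|^2\to0$; these terms total $\sum_u|Z(u,u)|\sum_i|Q_i|^2$ and are small. The off-diagonal terms $\Omega_d(\|u_i-v_j\|\tau)$ have $\|u_i-v_j\|$ bounded below only by a quantity shrinking with the mesh, so decay does not hold uniformly as the mesh shrinks.

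The key trick I would try is to choose $T$ first from the decay of $R$, and then the mesh. Even so, the Riemann sums for $\tau\in[T,\infty)$ need a separate argument. One option is to use that $|\Omega_d(t)|\le\epsilon$ for $t\ge t_0$ and split the pairs $(i,j)$ by whether $\|u_i-v_j\|\ge t_0/T$. The close pairs $\|\xi_i-\xi_j\|<t_0/T$ (within a diagonal block) have total weight $O((t_0/T)^d)\,|B|$, which is small once $T$ is large. The far pairs contribute at most $\epsilon\sum|Z||B|^2$. Both bounds are uniform in the mesh. This yields the sup-norm bound $<\eta/N$ for each $q$, and summing over $q$ completes the proof.
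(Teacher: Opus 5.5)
Your proposal is correct, and it takes a genuinely different route from the paper's proof.

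\textbf{The paper's route.} The paper samples each ball along an equidistributed sequence of rational points with equal weights $V^2/m^2$. This gives only pointwise convergence $P_m(\tau)\to R(\tau)$. It then subtracts the non-decaying diagonal constant $e_m=\tr(Z_m)=V^2\tr(Z)/m$, so that $P_m-e_m\in C_0([0,\infty))$. Riesz--Markov and dominated convergence upgrade the pointwise convergence of this bounded sequence to weak convergence in $C_0$. Mazur's theorem then produces convex combinations converging uniformly. The convex weights are absorbed into the matrices, so each thick function is replaced by many configurations.

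\textbf{Your route.} You prove uniform approximation directly.
\begin{itemize}
\item Volume-weighted cells make $P(0)=R(0)$ exact.
\item A Lipschitz Riemann-sum bound handles $[0,T]$.
\item On $[T,\infty)$ you combine the decay of $\Omega_d$ with the near-pair volume bound $\sum_{j:\|u_i-v_j\|<t_0/T}|Q_j|\le v_d(t_0/T+\text{mesh})^d$.
\end{itemize}
This works provided you fix the order $\epsilon\to t_0\to T\to\text{mesh}$, with mesh at most $t_0/T$, which you essentially do.

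\textbf{Comparison.} Your argument is more elementary and quantitative. It yields a single configuration per thick function and in effect reproves $R\in C_0$. The paper's soft argument is shorter to state and is the template reused with the colour index in Appendix~\ref{app:chromatic-certificate}; your construction would carry over there equally well. Your tail estimate, applied to the paper's equal-weight samples together with the portmanteau theorem on the closed sets $\{\|x-y\|\le t_0/T\}$, also shows that those $P_m$ already converge uniformly. So the Mazur step is not really needed.

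\textbf{Two small inaccuracies, neither load-bearing.}
\begin{itemize}
\item The displayed identity $R(\tau)=\int_{B(0,\delta)^U}P_{U(\xi),Z}(\tau)\,d\xi$ is false as written. The diagonal terms $u=v$ give $Z(u,u)$ rather than a double integral over $B_u\times B_u$, and the normalization is off by powers of $V$. You abandon this step, so nothing depends on it.
\item Near pairs are not confined to diagonal blocks when two of the disjoint balls are tangent. Your volume bound applies verbatim to every block $(u,v)$, so the estimate is unaffected.
\end{itemize}
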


\begin{proof}
It suffices to treat one thick radial function, since the finitely many
approximations can then be chosen with total error less than $\eta$.
Write $R=R_{U,Z,\delta}$ and put
$V=\operatorname{vol}(B(0,\delta))$.

For each $u\in U$ choose distinct rational points
$x_{u,1},x_{u,2},\ldots\in B_u$ such that
\[
 \frac Vm\sum_{a=1}^m\delta_{x_{u,a}}
\]
converges weakly to Lebesgue measure on $B_u$. Put
\[
 U_m:=\{x_{u,a}:u\in U,\ 1\le a\le m\}, \qquad Z_m(x_{u,a},x_{v,b})
 :=\frac{V^2}{m^2}Z(u,v).
\]
If $w\in\R_{\ge0}^{U_m}$ and
\[
 s_u:=\sum_{a=1}^m w(x_{u,a}) \qquad (u\in U),
\]
then
\[
 w^TZ_mw=\frac{V^2}{m^2}s^TZs\ge0,
\]
and therefore $Z_m\in\COP(U_m)$.
Let $P_m=P_{U_m,Z_m}$.

For each fixed $\tau\ge0$ the function
\[
 (x,y)\longmapsto\Omega_d(\tau\|x-y\|)
\]
is bounded and continuous. Hence weak convergence of the discrete measures
above, and therefore of their product measures, gives for each fixed $\tau\ge0$,
\begin{equation}\label{eq:A2}
 \lim_{m \to \infty} P_m(\tau)= R(\tau).
\end{equation}
Also,
$P_m(0)=R(0)$ for every $m$.

For fixed $m$, since the points $x_{u,a}$ are all distinct and
$\Omega_d(t)\to0$ as $t\to\infty$, only the diagonal terms survive as
$\tau\to\infty$.
We consider these terms now.
Let
\[
 e_m:=\lim_{\tau\to\infty}P_m(\tau)
 =\tr(Z_m)
 =\frac{V^2}{m}\tr(Z).
\]
So, clearly
\begin{equation}\label{eq:A1}
 \lim_{m\to\infty}e_m=0.
\end{equation}
Thus $P_m-e_m\in C_0([0,\infty))$. 
Since $|\Omega_d|\le 1$, the definition of $Z_m$ gives 
$\sup_m\|P_m\|_\infty<\infty$, and hence $P_m-e_m$ is uniformly bounded.
Moreover, by \eqref{eq:A2} and \eqref{eq:A1}, we have for each fixed $\tau\ge0$,
\[
 \lim_{m\to \infty}(P_m-e_m)(\tau)= R(\tau).
\]
By the Riesz--Markov representation theorem
\cite[Theorem~7.17]{Folland}, every continuous linear functional on
$C_0([0,\infty))$ is integration against a finite signed regular Borel
measure. Hence the pointwise convergence and uniform boundedness above,
together with the dominated convergence theorem~\cite[Theorem~2.24]{Folland}, give that $P_m-e_m$ converges weakly to $R$ in
$C_0([0,\infty))$.

By Mazur's theorem \cite[Theorem~3.13]{RudinFA}, there are finite convex
combinations
\[
 F_n=\sum_{m\ge n}\alpha_{n,m}(P_m-e_m)
\]
converging uniformly to $R$, where $\alpha_{n,m}\ge0$ and
$\sum_m\alpha_{n,m}=1$. For the corresponding convex combinations
$G_n:=\sum_{m\ge n}\alpha_{n,m}P_m$, we therefore have that
\[
 \|G_n-R\|_\infty
 \le \|F_n-R\|_\infty+\sup_{m\ge n}|e_m|.
\]
Hence, by \eqref{eq:A1}, $G_n$ converges uniformly to $R$.
Absorbing the nonnegative weights into the matrices therefore gives the required uniform approximation. Moreover, $G_n(0)=R(0)$ for every $n$,
since $P_m(0)=R(0)$ for every $m$.
\end{proof}
The same construction of Lemma~\ref{lem:thick-point-approximation} applies with any fixed finite auxiliary index carried along unchanged. We use this for the colour index in Appendix~\ref{app:chromatic-certificate}.

With Lemma~\ref{lem:finite-thick-scalar-certificate} and Lemma~\ref{lem:thick-point-approximation}, we now quickly prove the full converse assertion of Theorem~\ref{prop:scalar-certificate}.

\begin{proof}[Proof of the converse assertion of
Theorem~\ref{prop:scalar-certificate}]
Apply Lemma~\ref{lem:finite-thick-scalar-certificate} with
$\varepsilon/2$ and Lemma~\ref{lem:thick-point-approximation} with
$\eta=\varepsilon/2$, and put
\[
 z_0:=\lambda+\frac{\varepsilon}{2}.
\]
By \eqref{eq:thick-cert-normalization} and the equality at $0$ in
Lemma~\ref{lem:thick-point-approximation},
\eqref{eq:scalar-cert-normalization} follows from
\[
 P_{U,Z}(0)=\langle Z,J_U\rangle.
\]
Likewise, \eqref{eq:thick-cert-radial} and the uniform approximation in
Lemma~\ref{lem:thick-point-approximation}, together with the added
$\varepsilon/2$, give \eqref{eq:scalar-cert-radial} after splitting off
the diagonal terms in $P_{U,Z}$. Finally,
\[
 z_0<m_D(\R^d)+\varepsilon,
\]
as required.
\end{proof}

\section{Converse for chromatic certificates}
\label{app:chromatic-certificate}

Throughout, $D=\{d_1,\ldots,d_t\}\subseteq(0,\infty)$ is finite and nonempty,
$d\ge2$, and $k\ge2$. We prove the converse assertion of
Theorem~\ref{prop:chromatic-certificate}. Our proof follows the completely
positive approach of DeCorte, de Oliveira Filho and
Vallentin~\cite{decorte2022complete}. We adapt their method of extracting
independent sets to obtain a measurable colouring from a labelled kernel.
Compactness and separation in their framework of thick constraints then
give a finite certificate, which the sampling
construction of Appendix~\ref{app:scalar-certificate} replaces by
rational point constraints.

We first show that a measurable proper colouring of a conull set can,
after changes on a null set, be extended to a measurable proper colouring
of $\R^d$. We suspect that this is a folklore result, but have not found
it in the literature.

\begin{lemma}\label{lem:conull-colouring-completion}
Suppose that $A_1,\ldots,A_k\subseteq\R^d$ are measurable $D$-avoiding sets
whose union is conull. Then $\R^d$ admits a measurable proper $k$-colouring
of its $D$-distance graph.
\end{lemma}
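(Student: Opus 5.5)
The plan is to normalise each colour class to its set of Lebesgue density points, and then colour every remaining point by a local majority rule defined through the densities of the classes in small balls.

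\textbf{Two-point inequality.} Fix $x,y\in\R^d$ with $\|x-y\|=d_s\in D$, and put $v:=y-x$. Let $z$ be uniform in $B(x,r)$. Then $z+v$ is uniform in $B(y,r)$ and $\|(z+v)-z\|=d_s$. Since $A_i$ is $D$-avoiding, the events $z\in A_i$ and $z+v\in A_i$ are disjoint, so
\[
 \frac{|A_i\cap B(x,r)|}{|B(x,r)|}+\frac{|A_i\cap B(y,r)|}{|B(y,r)|}\le 1
 \qquad(r>0).
\]
Two consequences follow. First, the set $A_i^*$ of density-one points of $A_i$ is $D$-avoiding. Second, $A_i^*$ is Borel and differs from $A_i$ by a null set, by the Lebesgue density theorem. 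Hence $N:=\R^d\setminus\bigcup_i A_i^*$ is null, and the sets $A_i^*$ are pairwise disjoint.

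\textbf{Colouring the remaining points.} Put $d_i(x,r):=|A_i\cap B(x,r)|/|B(x,r)|$. Because $\bigcup_iA_i$ is conull, $\sum_i d_i(x,r)\ge1$ for every $x$ and $r$. Fix radii $r_n=2^{-n}$ and a nonprincipal ultrafilter $\mathcal U$ on $\mathbb N$, and define
\[
 \delta_i(x):=\lim_{\mathcal U}d_i(x,r_n),
\]
so that $\sum_i\delta_i(x)\ge1$. Let $c(x)$ be the smallest index $i$ maximising $\delta_i(x)$; then $\delta_{c(x)}(x)\ge1/k$. On $A_i^*$ we have $\delta_i=1$, so $c=i$ there. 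Thus each class $\{c=i\}$ differs from $A_i^*$ only inside the null set $N$, and is therefore Lebesgue measurable by completeness, even though the ultrafilter itself is not measurable. Passing to the $\mathcal U$-limit in the two-point inequality gives $\delta_i(x)+\delta_i(y)\le1$ whenever $\|x-y\|\in D$. This rules out every monochromatic pair involving a point of some $A_j^*$: if $y\in A_j^*$, then $\delta_j(x)=0$, so $c(x)\ne j=c(y)$.

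\textbf{Main obstacle.} The remaining case is a monochromatic pair $x,y\in N$ at a distance in $D$. For $k\ge3$, the bound $\delta_i\ge1/k$ at both points does not contradict $\delta_i(x)+\delta_i(y)\le1$. For $k=2$ there is also a boundary case $\delta_1=\delta_2=1/2$ at both points, where ties cannot be broken consistently. I expect this step to need a genuinely stronger local inequality. The first thing I would try is to replace the single translate by an average over a small cap of directions: take $w=z+d_s\omega$ with $\omega$ uniform near $v/d_s$, and compare balls of different radii, to get more flexible constraints on the $\delta_i$. Failing that, I would try to exploit the freedom to modify on a null set. This means choosing a generic translate $t$ such that $N$ meets each sphere $\{x+d_s\omega\}$ in a set small enough that points of $N$ are adjacent only to points of $\bigcup_iA_i^*$, after shrinking $N$ by an a.e.-invisible modification. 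On that reduced exceptional set, the density rule above would then already give a proper colouring.
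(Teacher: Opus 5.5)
There is a genuine gap, and it is the one you flag yourself. Once you have passed to density points, the whole content of the lemma is that the colouring of $A^*:=\bigcup_iA_i^*$ extends to a proper colouring of the null set $N$. Any such extension is automatically measurable, because it changes the classes only on $N$.

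Your two-point inequality and your treatment of edges meeting $A^*$ are fine. The majority rule, however, colours each point of $N$ using only its own local densities. The only relation you obtain between adjacent points, $\delta_i(x)+\delta_i(y)\le1$, is compatible with both points having the same maximal class. So nothing coordinates the choices along edges inside $N$, and you have no control over how many such edges there are: $N$ is null, not sparse for the distance graph.

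Neither suggested repair closes this. A sharper local inequality still yields a pointwise rule. Making $N$ edge-free by an a.e.-invisible modification means assigning points of $N$ to classes without creating conflicts, which is the same extension problem. A single generic translate $t$ cannot serve all points at once. Colouring $x\in N$ by $c(x+t)$ would need $N+t\subseteq A^*$. It would also have to respect the colours already fixed on $A^*$, whereas $c(y+t)\ne c(y)$ in general. Density-one information can only be intersected over finitely many points.

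The missing idea is to work with finitely many points and then use compactness.

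\begin{itemize}
\item First make the sets disjoint by replacing $A_i$ with $A_i\setminus\bigcup_{j<i}A_j$. As written, your claim that the $A_i^*$ are pairwise disjoint, and hence that $c=i$ on $A_i^*$, fails if the $A_i$ overlap in positive measure.
\item Keep only density-one points, and let $c$ be the resulting colouring of $A:=\bigcup_iA_i$.
\item For finite $F\subseteq\R^d$, consider the set
\[
\bigcap_{x\in F\cap A}(A_{c(x)}-x)\ \cap\ \bigcap_{x\in F\setminus A}(A-x).
\]
It has density one at $0$: each $A_{c(x)}-x$ has density one there, and the second part is conull. Choose $\xi$ in this set.
\item Then $x\mapsto c(x+\xi)$ is a proper colouring of $F$ that agrees with $c$ on $F\cap A$. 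Properness holds because adjacency is translation invariant and each $A_i$ is $D$-avoiding.
\item The de Bruijn--Erd\H{o}s compactness principle, or Tychonoff applied to $[k]^{\R^d}$ with the constraints ``agree with $c$ on $A$'' and ``proper'', gives a proper $k$-colouring of $\R^d$ extending $c$.
\item This colouring is Lebesgue measurable by completeness.
\end{itemize}

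With this argument, your ultrafilter majority rule is no longer needed.
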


\begin{proof}
First make the sets disjoint by replacing $A_i$ by
$A_i\setminus\bigcup_{j<i}A_j$. A density-one point of a set is a point
where the proportion of the set in shrinking balls centred there tends
to one. Discard the other points of each $A_i$; the Lebesgue density
theorem~\cite[Theorem~3.21]{Folland} says that this changes each set
only on a null set.
The resulting sets are still $D$-avoiding, their union $A$ is conull, and every
point of $A_i$ is a density-one point of $A_i$. Let $c:A\to[k]$ give
$A_i$ colour $i$.

For finite $F\subseteq\R^d$, the intersection
\[
 \bigcap_{x\in F\cap A}(A_{c(x)}-x)
 \ \cap\ \bigcap_{x\in F\setminus A}(A-x)
\]
has positive measure near $0$: the first part has density one there
and the second is conull. A translation $\xi$ in this intersection
gives a proper colouring $x\mapsto c(x+\xi)$ of $F$ agreeing with $c$
on $F\cap A$.
The compactness principle for graph colourings~\cite{debruijnerdos1951}
then gives a proper colouring of $\R^d$ extending $c$. Each colour
class differs from $A_i$ only within the null set $\R^d\setminus A$,
so the colouring is Lebesgue measurable.
\end{proof}

We next show how completely positive correlation matrices give a
measurable colouring.

\Needspace{14\baselineskip}
\begin{lemma}
\label{lem:labelled-cp-extraction}
Let $h:[0,\infty)\to\R$ be continuous with
\[
 h(d_s)=-\frac1{k-1}\qquad(1\le s\le t),
\]
and put
\[
 C_{ij}(x,y):=\frac{1+(k\delta_{ij}-1)h(\|x-y\|)}{k^2}.
\]
If
\[
 C_U:=\bigl(C_{ij}(x,y)\bigr)_{(x,i),(y,j)\in U\times[k]}
 \in\CP(U\times[k])
\]
for every finite nonempty $U\subseteq\Q^d$, then
$\chi_D^{(m)}(\R^d)\le k$.
\end{lemma}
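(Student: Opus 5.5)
The plan is to read each completely positive factorization of $C_U$ as a probability distribution on proper $k$-colourings of $U$. These distributions will be glued by compactness into a random colouring of $\Q^d$. Continuity of $h$ will then let us extend this random colouring to a measurable random colouring of $\R^d$. A single realization will be proper off a null set, and Lemma~\ref{lem:conull-colouring-completion} will finish the proof.

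\emph{Combinatorial content of one factorization.} Fix finite $U\subseteq\Q^d$ and write $C_U=\sum_r b_rb_r^T$ with $b_r\in\R_{\ge0}^{U\times[k]}$.
\begin{itemize}
\item Summing over colours gives $\sum_{i,j}C_{ij}(x,y)=1$ for all $x,y$. Put $f_r(x):=\sum_i b_r(x,i)$. Then the Gram matrix of the vectors $(f_r(x))_r$ is the all-ones matrix $J_U$, so these vectors all equal one unit vector $(\phi_r)_r$. Consequently, whenever $\phi_r>0$, every $x\in U$ has some colour $i$ with $b_r(x,i)>0$.
\item If $\|x-y\|\in D$, then $h(\|x-y\|)=-1/(k-1)$ gives $C_{ii}(x,y)=0$. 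Hence $b_r(x,i)b_r(y,i)=0$ for every $r$.
\end{itemize}
So each $r$ with $\phi_r>0$ determines a colour-choice relation on $U$ in which no colour is used at two points at a forbidden distance. Choose the colour of each $x$ independently with probability $b_r(x,i)/\phi_r$, and choose $r$ with probability $\phi_r^2$. This yields a probability measure $\mu_U$ on proper $k$-colourings of the $D$-graph on $U$. The factorization identities then give a pair bound of the form
\[
\mu_U\bigl(c(x)\ne c(y)\bigr)\le \alpha\bigl(1-h(\|x-y\|)\bigr)
\]
for some constant $\alpha$, at least for small $\|x-y\|$. I expect to have to check this bound carefully: when colours are chosen independently it is only an inequality. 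If it fails, I would instead use $\mu_U(c(x)=c(y)=i)\ge$ a quantity governed by $C_{ii}(x,y)$, obtained by Cauchy--Schwarz on the weights. The needed input is that $C_{ii}(x,x)$ and $C_{ii}(x,y)$ are close when $x$ and $y$ are close, which follows from continuity of $h$.

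\emph{Gluing and averaging.} Take $U$ increasing to $\Q^d$. The product space $[k]^{\Q^d}$ is compact, so a weak limit of the measures $\mu_U$ (extended arbitrarily off $U$) gives a random colouring $c$ of $\Q^d$ with two properties. It is almost surely proper on all rational pairs at distances in $D$, since this is a countable family of closed conditions. Its pair-disagreement probabilities also obey the continuity bound above. Averaging over rational translations and $SO(d,\Q)$ is optional; it only makes the law invariant.

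\emph{Passage to $\R^d$.} This is the main obstacle. The continuity bound says $x\mapsto c(x)$ is uniformly continuous in probability on $\Q^d$. So for each real $x$ the limit $c(x):=\lim c(x_n)$ along rationals $x_n\to x$ exists in probability, and some version of the resulting process is jointly measurable in $(x,\omega)$. For each $s$, the set of pairs $(x,y)$ with $\|x-y\|=d_s$ and $c(x)=c(y)$ has probability zero for every fixed pair. This holds because both $\|x_n-y_n\|\to d_s$ and $h$ is continuous at $d_s$, so the limiting monochromatic probability is forced by $C_{ii}$ to vanish. However, $\|x_n-y_n\|$ is generally not exactly $d_s$. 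I would handle this by approximating with rational pairs at exact distance $d_s$, which exist densely when $d_s^2\in\Q$ and $d\ge 2$ via rational rotations. In the general case I would apply $C_{ii}$ and continuity of $h$ directly to get a vanishing limit. Fubini then gives, for almost every $\omega$, that the colour classes $A_i=\{x:c(x)=i\}$ are measurable and almost $D$-avoiding: only a null set of pairs at distance in $D$ is monochromatic. Finally, I would remove a null set from each $A_i$, using Lebesgue density points as in Lemma~\ref{lem:conull-colouring-completion}, to make each class genuinely $D$-avoiding while keeping the union conull. Lemma~\ref{lem:conull-colouring-completion} then gives $\chi_D^{(m)}(\R^d)\le k$.
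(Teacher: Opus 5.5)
\textbf{The gap: independent rounding fails when $h(0)<1$.} The lemma does not assume $h(0)=1$, and the paper needs the case $h(0)<1$. In the converse of Theorem~\ref{prop:chromatic-certificate}, $h(r)=\int\Omega_d(r\tau)\,d\mu(\tau)$ with $\mu$ of mass at most one, because mass can escape to infinity in the weak-* compactness step.

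Look at what your rounding produces. For $x\ne y$ in $U$,
\[
 \mu_U\bigl(c(x)=i,\ c(y)=j\bigr)=\sum_r b_r(x,i)\,b_r(y,j)=C_{ij}(x,y),
\]
so exactly
\[
 \mu_U\bigl(c(x)\ne c(y)\bigr)=\tfrac{k-1}{k}\bigl(1-h(\|x-y\|)\bigr).
\]
As $y\to x$ this tends to $\frac{k-1}{k}(1-h(0))$, which is positive when $h(0)<1$. The weak limit inherits these pair laws. So your random colouring of $\Q^d$ is not continuous in probability, the limits $\lim c(x_n)$ along rational sequences do not exist, and the passage to $\R^d$ breaks down.

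The pair law of the rounded process is completely determined, so no alternative estimate such as your Cauchy--Schwarz fallback can repair this. The input you name, closeness of $C_{ii}(x,x)$ and $C_{ii}(x,y)$, controls the unrounded factors, not the rounded colours. Indeed $\mu_U(c(x)=i)=1/k$, whereas $C_{ii}(x,x)=\bigl(1+(k-1)h(0)\bigr)/k^2$. These agree only when $h(0)=1$, which happens exactly when every factor $b_r/\phi_r$ is already a vertex of the simplex, so that your rounding does nothing.

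\textbf{How to repair it.} Postpone the rounding. Keep the simplex-valued factors $p^{(r)}=b_r/\phi_r$, and glue the measures $\sum_r\phi_r^2\delta_{p^{(r)}}$ on the compact space of maps $\Q^d\to\Delta_k$. The moment identities then also hold on the diagonal, which gives
\[
 \int\|p(x)-p(y)\|^2\,d\nu(p)=\frac{2(k-1)}{k}\bigl(h(0)-h(\|x-y\|)\bigr)\longrightarrow0 .
\]
This uses only continuity of $h$ at $0$, whatever the value of $h(0)$. It lets you extend to a jointly measurable simplex-valued $f(p,x)$ on $X\times\R^d$; the paper obtains it as a weighted $L^2$ limit of grid roundings. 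Only then round, by taking the positivity sets $\{x:f_i(p,x)>0\}$, which cover $\R^d$ because $\sum_if_i=1$.

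\textbf{A second gap in your final step.} Even when $h(0)=1$, knowing that monochromatic forbidden pairs form a null set in $\R^d\times S^{d-1}$ does not let you simply discard non-density points, as in Lemma~\ref{lem:conull-colouring-completion}. Two density points $x,y$ of $A_i$ at forbidden distance give positive overlap only for the single displacement $v=y-x$, which is a null set of pairs. You need continuity in $v$ of an overlap integral such as $\int\rho(z)\rho(z+v)\mathbf 1_{A_i}(z)\mathbf 1_{A_i}(z+v)\,dz$ to spread this to a positive-measure set of directions. The paper's functions $\Gamma_i^p$ provide exactly this.
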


\begin{proof}
By Lemma~\ref{lem:conull-colouring-completion}, it suffices to construct
$k$ measurable $D$-avoiding sets whose union is conull.
For each pair $x,y$, the entries $C_{ij}(x,y)$ sum to one over $i,j$;
also, $C_{ii}(x,y)=0$ when $\|x-y\|\in D$. We will construct
nonnegative measurable functions $f_1,\ldots,f_k$ whose sum is one
and show that their positivity sets avoid these distances after taking
density points. We first factor $C$ on rational points.

For finite nonempty $U\subseteq\Q^d$, write
$C_U=\sum_a b_ab_a^T$ with $b_a\ge0$, and put
$w_a(x):=\sum_i b_a(x,i)$. The block sums of $C$ are one, so
\[
 \sum_a w_a(x)w_a(y)=1,
 \qquad
 \sum_a\bigl(w_a(x)-w_a(y)\bigr)^2=1+1-2=0.
\]
Thus $w_a(x)=w_a$ is independent of $x$. Discard zero factors and
put $p_i^{(a)}(x):=b_a(x,i)/w_a$. Then
\[
 \sum_i p_i^{(a)}(x)=1,\qquad \sum_a w_a^2=1,\qquad
 C_{ij}(x,y)=\sum_a w_a^2p_i^{(a)}(x)p_j^{(a)}(y).
\]

\Needspace{10\baselineskip}
The finite factorizations need not agree as $U$ grows. Compactness
lets us make their integral identities hold simultaneously.
Let $X$ be the space of maps
$p:\Q^d\to\R_{\ge0}^k$ satisfying $\sum_i p_i(q)=1$ for every $q$,
with the topology of pointwise convergence. The possible values at each
$q$ form a compact set, and $\Q^d$ is countable, so $X$ is compact
and metrizable. Choose finite nonempty sets
$U_1\subseteq U_2\subseteq\cdots$ with $\bigcup_n U_n=\Q^d$.
For each $U_n$, extend its factors $p^{(a)}$ to
$X$ by assigning $(1,0,\ldots,0)$ outside $U_n$, and put
$\nu_n:=\sum_a w_a^2\delta_{p^{(a)}}$. These are positive measures
of mass one on $X$. By compactness, after passing to a subsequence,
their integrals of every continuous function converge to those of a
positive measure $\nu$ of mass one; see
\cite[Theorem~7.17]{Folland} and \cite[Theorem~3.17]{RudinFA}.
For any fixed $q,r\in\Q^d$, eventually
$q,r\in U_n$, so
\begin{equation}\label{eq:rational-simplex-moments}
 \int_X p_i(q)p_j(r)\,d\nu(p)=C_{ij}(q,r).
\end{equation}

Continuity of $C$ now extends these rational-point factors to
measurable functions on $\R^d$. The positive weight
$\rho(x):=e^{-\|x\|^2}$ keeps our integrals finite over all of $\R^d$.
Define $\pi_n:\R^d\to\Q^d$ by rounding each coordinate down to a
multiple of $2^{-n}$. These maps are measurable and tend uniformly
to the identity. Put $f_n(p,x):=p(\pi_n(x))$.
As $n,m\to\infty$, \eqref{eq:rational-simplex-moments} gives
\[
\begin{aligned}
 &\int_X\int_{\R^d}\rho(x)^2
 \|f_n(p,x)-f_m(p,x)\|^2\,dx\,d\nu(p)\\
 &\qquad=\frac{2(k-1)}k\int_{\R^d}\rho^2
 \bigl(h(0)-h(\|\pi_n(x)-\pi_m(x)\|)\bigr)\,dx
 \longrightarrow0.
\end{aligned}
\]
The limit follows from continuity of $h$ at $0$ and uniform convergence
of the grid maps. Thus $f_n$ is Cauchy in the weighted $L^2$ space,
and a subsequence converges almost everywhere~\cite[Theorems~6.6 and~2.30]{Folland}.
Let $f(p,x)$ be the limit where it exists and $(1,0,\ldots,0)$
otherwise. It is jointly measurable, with $0\le f_i\le1$ and
$\sum_i f_i=1$. Since $\rho>0$, convergence holds
$\nu\otimes dx$-almost everywhere as well. For almost every $(x,y)$,
the subsequence converges at both points for $\nu$-almost every $p$.
Bounded convergence~\cite[Theorem~2.24]{Folland} in
\eqref{eq:rational-simplex-moments} and
continuity of $C$ therefore give
\begin{equation}\label{eq:labelled-integral-factorization}
 C_{ij}(x,y)=\int_X f_i(p,x)f_j(p,y)\,d\nu(p)
 \qquad\text{for almost every }(x,y).
\end{equation}

Pairs at forbidden distances form a null set in $\R^d\times\R^d$,
so \eqref{eq:labelled-integral-factorization} could fail on all of them.
We therefore integrate over the starting point
and use continuity in the displacement. For $p\in X$, put
\[
 \Gamma_i^p(v):=\int_{\R^d}\rho(x)\rho(x+v)
 f_i(p,x)f_i(p,x+v)\,dx.
\]
This nonnegative weighted overlap integral is positive exactly when
$f_i(p,x)$ and $f_i(p,x+v)$ are both positive on a set of positive
measure. It is continuous because
translations are continuous in $L^2(dx)$~\cite[Proposition~8.5]{Folland} and
$\rho f_i(p,\cdot)\in L^2(dx)$.
After the change of variables $y=x+v$, the Fubini--Tonelli
theorem~\cite[Theorem~2.37]{Folland} applied to
\eqref{eq:labelled-integral-factorization} gives
\begin{equation}\label{eq:labelled-autocorrelation}
 \int_X\Gamma_i^p(v)\,d\nu(p)
 =\frac{1+(k-1)h(\|v\|)}{k^2}
 \int_{\R^d}\rho(x)\rho(x+v)\,dx.
\end{equation}
Initially this holds for almost every $v$. Both sides are
continuous: on the left use
$0\le\Gamma_i^p(v)\le\|\rho\|_2^2$ and finiteness of $\nu$.
Thus it holds for every $v$.

At $v=d_s\omega$ the right side is zero, so the nonnegative
integrand vanishes for $\nu$-almost every $p$. Choose a countable
dense set of directions and fix $p$ for which all the corresponding
overlap integrals vanish, for every $i,s$. Continuity gives
$\Gamma_i^p(d_s\omega)=0$ for every $\omega\in S^{d-1}$.

\Needspace{8\baselineskip}
We now follow the density-point argument in
\cite[proof of Lemma~5.6]{decorte2022complete}.
Let $A_i$ be the density-one points of $\{x:f_i(p,x)>0\}$. If
$x,y\in A_i$ and $v:=y-x$ has length in $D$, both
$\{z:f_i(p,z)>0\}$ and $\{z:f_i(p,z+v)>0\}$ have density one at
$x$. Their intersection has positive measure near $x$, so
$\Gamma_i^p(v)>0$, a contradiction.
Hence each $A_i$ is $D$-avoiding. Since $\sum_i f_i(p,x)=1$, their
positivity sets cover $\R^d$. The Lebesgue density theorem makes
$\bigcup_i A_i$ conull, and
Lemma~\ref{lem:conull-colouring-completion} completes the colouring.
\end{proof}

\Needspace{11\baselineskip}
We now use the thick-constraint framework of DeCorte, de Oliveira
Filho and Vallentin~\cite[Lemmas~10.1--10.2]{decorte2022complete}.
For finite nonempty $U\subseteq\Q^d$, $Z\in\COP(U\times[k])$, and
pairwise disjoint balls $B_u=B(u,\delta)$ of common volume $V$,
associate the normalized thick pair $(A,R)$ defined by
\begin{align*}
 A&:=A_Z,\\
 R(\tau)&:=\frac1{V^2}\sum_{u,v\in U}S_Z(u,v)
 \int_{B_u}\int_{B_v}\Omega_d(\tau\|x-y\|)\,dy\,dx,
\end{align*}
where $A_Z$ and $S_Z$ are as in \eqref{eq:AZ}--\eqref{eq:SZ}.
This normalization gives an equivalent thick system: dividing by
$V^2>0$ preserves each inequality and copositivity.
The ball averages ensure $R\in C_0([0,\infty))$, the space of
continuous functions vanishing at infinity: dominated
convergence applies since $|\Omega_d|\le1$, and
$\Omega_d(\tau\|x-y\|)\to0$ as $\tau\to\infty$ whenever $x\ne y$.
This decay will make the thick inequalities weak-* closed.

\begin{proof}[Proof of the converse assertion of Theorem~\ref{prop:chromatic-certificate}]
Assume $\chi_D^{(m)}(\R^d)>k$. Suppose that a nonnegative measure
$\mu$ on $[0,\infty)$ of mass at most one satisfies
\[
 \int_0^\infty\Omega_d(d_s\tau)\,d\mu(\tau)=-\frac1{k-1}
 \quad(1\le s\le t),
 \qquad
 A+\int_0^\infty R(\tau)\,d\mu(\tau)\ge0
\]
for every thick pair $(A,R)$. Put
$h(r):=\int_0^\infty\Omega_d(r\tau)\,d\mu(\tau)$ and define $C_U$
as in Lemma~\ref{lem:labelled-cp-extraction}. The definitions give
\[
 \langle Z,C_U\rangle
 =A_Z+\sum_{u,v\in U}S_Z(u,v)h(\|u-v\|).
\]
The thick expression $A+\int R\,d\mu$ replaces each $h(\|u-v\|)$
by its average over $B_u\times B_v$.
For the pair arising from $U,Z,\delta$, Fubini and continuity of $h$
give, as $\delta\downarrow0$,
\[
 A+\int R\,d\mu
 \longrightarrow\langle Z,C_U\rangle.
\]
Thus $\langle Z,C_U\rangle\ge0$ for every
$Z\in\COP(U\times[k])$. For finite symmetric matrices, complete
positivity is equivalent to having nonnegative Frobenius pairing with
every copositive matrix~\cite[Section~4]{decorte2022complete}. Hence
$C_U\in\CP(U\times[k])$ for every finite nonempty $U\subseteq\Q^d$,
contradicting Lemma~\ref{lem:labelled-cp-extraction}. The thick
system is therefore infeasible.

We now extract a finite certificate from this infeasibility, following
the compactness and duality strategy of
\cite[Sections~10.2--10.3]{decorte2022complete}. The set
\[
 \mathcal M:=\{\mu\ge0:\mu([0,\infty))\le1\}
\]
is weak-* compact in $C_0([0,\infty))^*$~\cite[Theorem~3.15]{RudinFA}.
Here convergence means
convergence of integrals against functions in $C_0([0,\infty))$.
Mass may escape to infinity: unit point masses moving to infinity
converge to the zero measure in this sense. We therefore allow mass
at most one; Lemma~\ref{lem:labelled-cp-extraction} does not require
$h(0)=1$. Since $d\ge2$, both the moment functions
$\Omega_d(d_s\,\cdot)$ and the thick radial functions belong to
$C_0([0,\infty))$. All constraints are therefore weak-* closed.
If every finite collection could be satisfied, compactness would give
a measure satisfying all of them. Hence there are finitely many thick pairs $(A_q,R_q)$,
$1\le q\le N$, already incompatible with the prescribed moments.

The compact convex set of vectors
\[
 \left\{
 \left(\left(\int\Omega_d(d_s\tau)\,d\mu(\tau)\right)_{s=1}^t,
       \left(\int R_q\,d\mu\right)_{q=1}^N\right):\mu\in\mathcal M
 \right\}
\]
is disjoint from the closed convex set whose first $t$ coordinates
are $-1/(k-1)$ and whose remaining coordinates are at least $-A_q$.

\Needspace{8\baselineskip}
Strict separation~\cite[Theorem~3.4(b)]{RudinFA} gives a weighted
combination of the constraints,
expressed by an affine functional
$z+\sum_s a_sx_s+\sum_q b_qy_q$ that is nonnegative on the first
set and negative on the second. Since the second set is unbounded
in each positive $y_q$-direction, $b_q\le0$. Multiply each defining
copositive matrix by $-b_q$, scaling both $A_q$ and $R_q$ by that
factor, and discard zero multipliers. With $A_q,R_q$ now denoting
the scaled pairs, evaluation at unit point masses $\delta_\tau$ and
at the point with $y_q=-A_q$ gives
\begin{align*}
 z+\sum_{s=1}^t a_s\Omega_d(d_s\tau)-\sum_q R_q(\tau)
 &\ge0\qquad(\tau\ge0),\\
 g:=z-\frac1{k-1}\sum_{s=1}^t a_s+\sum_q A_q&<0.
\end{align*}

It remains to replace the thick pairs by rational point constraints.
We use the sampling argument of~\cite[proof of Lemma~10.1]{decorte2022complete},
adapted for uniform rational approximation in
Lemma~\ref{lem:thick-point-approximation}, with the colour index retained.
For $m$ rational sample points $x_{u,a}$ in each ball, its
labelled matrix is
\[
 Z_m\bigl((x_{u,a},i),(x_{v,b},j)\bigr)
 :=\frac1{m^2}Z_{ui,vj}.
\]
Aggregating nonnegative coordinates over $a$ proves copositivity
of $Z_m$. The same scalar approximation argument applies to the
coefficients $S_Z$, while $A_{Z_m}=A_Z=A$ exactly.
Consequently, for any $\eta>0$ we obtain finitely many rational
labelled configurations $(U_\ell,Z_\ell)$ such that, writing
\[
 P(\tau):=\sum_\ell\sum_{u,v\in U_\ell}S_{Z_\ell}(u,v)
 \Omega_d(\|u-v\|\tau),
\]
the constants are preserved and the error is less than $\eta$ at every
$\tau\ge0$:
\[
 \sum_\ell A_{Z_\ell}=\sum_q A_q,
 \qquad \left|\sum_q R_q(\tau)-P(\tau)\right|<\eta.
\]
Take $\eta:=-g/2>0$. Increasing $z$ by $\eta$ compensates for the
approximation error at every $\tau$, so the radial inequality remains
nonnegative. The inequality~\eqref{eq:chromatic-cert-gap} remains strict,
since its left-hand side is now $g+\eta=g/2<0$.
Finally, $\Omega_d(0)=1$ gives
\[
 P(\tau)=\sum_\ell\tr(S_{Z_\ell})+
 \sum_\ell\sum_{\substack{u,v\in U_\ell\\u\ne v}}
 S_{Z_\ell}(u,v)\Omega_d(\|u-v\|\tau)
\]
so the radial inequality is \eqref{eq:chromatic-cert-radial}.
\end{proof}

\end{document}


\maketitle
\vspace{-2.5em}

This supplement supplies the exact certificates and the global verification
used in Section~9 of the accompanying paper. It contains all twenty
coefficient lists and explains the estimates implemented by the standalone
Python~3 verifier. The verification uses only the Python standard library;
no optimization software is needed to check the certificates.

\section{Certificates and bounds}

For each dimension, put $\nu=d/2-1$ and
$K=\lfloor(d-1)/2\rfloor$. Use the functions from Lemma~9.2 of the paper,
\begin{equation}\label{eq:kernels}
 h_j(s)=\frac{\Gamma(\nu+1)2^\nu}{\nu^j}
              s^{j-\nu}J_{\nu+j}(s),\qquad 0\le j\le K,
 \qquad H(s)=\sum_{j=0}^K c_jh_j(s).
\end{equation}
Here $s$ is the Bessel argument: $h_0(s)=\Omega_d(s/(2\pi))$ in the
paper's Fourier convention. The functions are defined at zero by
continuity, with $h_0(0)=1$ and $h_j(0)=0$ for $j>0$.
Every coefficient and every $B$ below is an exact rational decimal.
All certificates have $c_0=1$, so $H(0)=1$.

For the coefficients in Section~\ref{sec:coefficients}, the verifier
establishes
\begin{equation}\label{eq:certificate}
 H(s)\ge-B\qquad\text{for every }s\ge0.
\end{equation}
Theorem~8.1 and Lemma~9.2 of the paper therefore give
\[
 m_1(\R^d)\le\frac{B}{1+B},\qquad
 \chi_m(\R^d)\ge\left\lceil1+\frac1B\right\rceil.
\]
Table~\ref{tab:bounds} records these consequences and the cutoff $T$
used in the verification. The ordinary chromatic and independence-density
conclusions follow from Theorem~1.1 of the paper.

\begin{table}[tbp]
\centering
\small
\setlength{\tabcolsep}{7pt}
\renewcommand{\arraystretch}{1.04}
\begin{tabular}{@{}rrrrrr@{}}
\toprule
$d$ & $K$ & $T$ & $B$ & $m_1(\R^d)\le$ & $\chi_m(\R^d)\ge$\\
\midrule
5 & 2 & 50 & $0.0440974$ & $0.04223494858$ & 24 \\
6 & 2 & 50 & $0.0196765$ & $0.01929680639$ & 52 \\
7 & 3 & 50 & $0.0103676$ & $0.01026121582$ & 98 \\
8 & 3 & 50 & $0.00580985$ & $0.005776290618$ & 174 \\
9 & 4 & 50 & $0.00280977$ & $0.002801897313$ & 357 \\
10 & 4 & 110 & $0.00137375$ & $0.0013718654$ & 729 \\
11 & 5 & 50 & $0.000723561$ & $0.0007230378381$ & 1,384 \\
12 & 5 & 60 & $0.000397746$ & $0.0003975878611$ & 2,516 \\
13 & 6 & 50 & $0.000178119$ & $0.0001780872793$ & 5,616 \\
14 & 6 & 230 & $7.88264\times10^{-5}$ & $7.882018689\times10^{-5}$ & 12,688 \\
15 & 7 & 70 & $4.27521\times10^{-5}$ & $4.275027234\times10^{-5}$ & 23,392 \\
16 & 7 & 110 & $2.40671\times10^{-5}$ & $2.406652079\times10^{-5}$ & 41,552 \\
17 & 8 & 50 & $1.15532\times10^{-5}$ & $1.155306653\times10^{-5}$ & 86,558 \\
18 & 8 & 370 & $5.61038\times10^{-6}$ & $5.610348524\times10^{-6}$ & 178,243 \\
19 & 9 & 100 & $2.94066\times10^{-6}$ & $2.940651353\times10^{-6}$ & 340,061 \\
20 & 9 & 170 & $1.53933\times10^{-6}$ & $1.539327631\times10^{-6}$ & 649,635 \\
21 & 10 & 60 & $6.73174\times10^{-7}$ & $6.731735469\times10^{-7}$ & 1,485,502 \\
22 & 10 & 560 & $3.31407\times10^{-7}$ & $3.314068902\times10^{-7}$ & 3,017,439 \\
23 & 11 & 140 & $1.76559\times10^{-7}$ & $1.765589689\times10^{-7}$ & 5,663,831 \\
24 & 11 & 250 & $9.51021\times10^{-8}$ & $9.510209096\times10^{-8}$ & 10,515,016 \\
\bottomrule
\end{tabular}
\caption{Exact certificate parameters and their consequences. The density
bound $B/(1+B)$ is rounded upwards to ten significant figures; the integer
bound is computed from the exact reciprocal.}
\label{tab:bounds}
\end{table}

\section{Verification on the whole half-line}

The verification covers $[0,1]$, $[1,T]$ and $[T,\infty)$ separately.
The first interval has a direct rational bound. On the finite interval,
the program evaluates intervals containing the exact values of $H$ and
its first six derivatives, then applies Taylor's formula on each
subinterval. The infinite tail is controlled by an explicit Bessel
bound in even dimensions and by exact trigonometric formulas in odd
dimensions. The standard Bessel identities used below are recorded
in~\cite{NIST:DLMF}.

For the interval near zero, set
\[
 \beta_j=\frac1{(2\nu)^j\prod_{\ell=1}^j(\nu+\ell)},
 \qquad h_j(s)=\beta_js^{2j}
              \Omega_{d+2j}\!\left(\frac{s}{2\pi}\right),
\]
where an empty product is $1$. The spherical average satisfies
$|\Omega_n|\le1$. Averaging $\cos x\ge1-x^2/2$ also gives
$h_0(s)\ge1-s^2/(2d)$, since the average squared coordinate on the
unit sphere is $1/d$. Consequently,
\begin{equation}\label{eq:near-zero}
 H(s)\ge1-\frac1{2d}-\sum_{j=1}^K|c_j|\beta_j
 \qquad(0\le s\le1).
\end{equation}
The expression on the right is rational and positive for every certificate.

For the finite interval, the program uses the normalized Bessel series
\begin{equation}\label{eq:series}
 h_m(s)=\beta_ms^{2m}
  \sum_{\ell=0}^{\infty}
    \frac{(-1)^\ell(s^2/4)^\ell}
         {\ell!\prod_{q=1}^{\ell}(\nu+m+q)}.
\end{equation}
The ratio of consecutive term magnitudes is
$(s^2/4)/((\ell+1)(\nu+m+\ell+1))$.
Once this ratio is less than $1$, it continues to decrease, so the
alternating-series remainder is bounded by the next term.
The program sums with decimal arithmetic rounded down for lower
endpoints and up for upper endpoints, and includes this remainder in
the resulting interval. Thus cancellation in the series is fully covered.

Derivatives are obtained from
$h_m'(s)=s^{-1}(2mh_m(s)-\nu h_{m+1}(s))$.
More explicitly, write
\begin{equation}\label{eq:derivatives}
 h_k^{(r)}(s)=s^{-r}\sum_{j=0}^r a_{k,r,j}\nu^jh_{k+j}(s).
\end{equation}
The coefficients are integers, starting with $a_{k,0,0}=1$ and satisfying
\[
 a_{k,r+1,j}=(2(k+j)-r)a_{k,r,j}-a_{k,r,j-1},
\]
with missing indices interpreted as zero. The verifier uses this
recurrence to obtain the derivative intervals.
All coefficients multiplying the intervals in \eqref{eq:derivatives}
are evaluated as exact rational numbers before their outward-rounded
conversion to decimal intervals.

Consider a subinterval $[a,b]\subseteq[1,T]$, with midpoint $m$ and
half-width $\rho$. These midpoint calculations, including the squared
midpoint used in \eqref{eq:series}, are checked for exact representability.
Let $L_j\le H^{(j)}(m)\le U_j$ be the computed intervals, and put
$M_j=\max\{|L_j|,|U_j|\}$. Taylor's formula gives the lower bound
\begin{equation}\label{eq:taylor}
 H(m+x)\ge L_0-M_1|x|+\tfrac12L_2x^2
     -\sum_{j=3}^6\frac{M_j\rho^j}{j!}
     -\frac{V\rho^7}{7!},\qquad |x|\le\rho,
\end{equation}
where $V$ is a rational upper bound for $|H^{(7)}|$ on $[a,b]$.
The minimum of the quadratic part is computed exactly: if $L_2>0$,
it occurs at $|x|=\min\{\rho,M_1/L_2\}$; if $L_2\le0$, it occurs at
$|x|=\rho$.

To obtain $V$, the program combines \eqref{eq:derivatives} with two
bounds for each $s^{-7}h_m(s)$.
Writing $C=\Gamma(\nu+1)2^\nu$, these are
\[
 |s^{-7}h_m(s)|\le
 \min\left\{C\nu^{-m}s^{m-\nu-7},\,
                 \beta_ms^{2m-7}\right\}.
\]
They follow respectively from $|J_{\nu+m}(s)|\le1$ and
$|\Omega_{d+2m}|\le1$.
The maximum of each power on $[a,b]$ occurs at an endpoint.
For rational computation, $s^{m-\nu-7}$ is replaced by the larger
$s^{\lceil m-\nu-7\rceil}$, which is valid because $s\ge1$.
When $\nu$ is an integer, $C$ is an integer; when it is a half-integer,
$C^2/\pi$ is rational and $\pi<22/7$ supplies a rational square-root
upper bound for $C$. Substituting these bounds in
\eqref{eq:derivatives} with $r=7$ gives $V$ by exact rational arithmetic.

If the minimum in \eqref{eq:taylor} is at least $-B$, the whole
subinterval passes. Otherwise the program bisects it and checks both
halves. A successful run exhausts the list of subintervals, so it covers
every point of $[1,T]$. Decimal precision is at least $180$ digits and
is increased for the larger cutoffs. The proof uses the computed interval
endpoints and rational comparisons, rather than a presumed accuracy of
the decimal approximations.

For the even-dimensional tail, put $y(s)=\sqrt{s}J_\alpha(s)$, where
$\alpha\ge1/2$. Bessel's equation becomes
\[
 y''+qy=0,\qquad q(s)=1-\frac{\alpha^2-1/4}{s^2}.
\]
The quantity $E=y'^2+qy^2$ satisfies $E'=q'y^2\ge0$.
The large-argument Bessel formulas give $E(s)\to2/\pi$; hence
$E(s)\le2/\pi$. Where $q>0$, this proves
\begin{equation}\label{eq:bessel-tail}
 |J_\alpha(s)|\le
    \left(\frac{2s}{\pi(s^2-\alpha^2+1/4)}\right)^{1/2}.
\end{equation}
Applying this to $\alpha=\nu+j$ gives
\[
 |c_jh_j(s)|\le
 \frac{|c_j|C}{\nu^j}
 \left(\frac{2s^{2j-2\nu-1}}
  {\pi(1-((\nu+j)^2-1/4)/s^2)}\right)^{1/2}.
\]
Because $j\le K$ gives $2j-2\nu-1\le0$, each bound is
nonincreasing once its denominator is positive.
The program checks $T^2>(\nu+K)^2-1/4$ and that the sum of the
bounds at $T$ is at most $B$. This proves $|H(s)|\le B$ for every
$s\ge T$. Rational square-root upper bounds are used throughout.
The lower bound $3.14159265358979<\pi$ is checked by the alternating
arctangent series and Machin's identity
$\pi=16\arctan(1/5)-4\arctan(1/239)$.

For odd $d$, write $\nu=n+1/2$. The exact half-integer formulas take
the form
\[
 J_{\ell+1/2}(s)=\sqrt{\frac2{\pi s}}
       \bigl(p_\ell(1/s)\sin s+q_\ell(1/s)\cos s\bigr).
\]
The rational polynomials start with
$p_0=1$, $q_0=0$, $p_1=x$ and $q_1=-1$, and both follow
the recurrence $f_{\ell+1}(x)=(2\ell+1)xf_\ell(x)-f_{\ell-1}(x)$.
In \eqref{eq:kernels}, the square-root and gamma factors cancel to
give the rational factor $(2n+1)!!\nu^{-j}s^{j-n-1}$.
Since $j\le K=n+1$, this leaves only nonnegative powers of $1/s$.
Thus the program computes rational polynomials $A$ and $D$ with
\[
 H(s)=A(1/s)\sin s+D(1/s)\cos s.
\]
Their squared amplitude has no odd powers of $1/s$, as also checked
exactly by the program. Therefore
\begin{equation}\label{eq:amplitude}
 |H(s)|^2\le A(1/s)^2+D(1/s)^2=Q(1/s^2)
\end{equation}
for a rational polynomial $Q$.
The entire tail corresponds to $0\le x=1/s^2\le T^{-2}$, including
the limiting value $x=0$.

To bound $Q$, write $Q(x)=\sum_{j=0}^N q_jx^j$ and expand
\[
 Q(T^{-2}t)=\sum_{i=0}^N b_i\binom Ni t^i(1-t)^{N-i},
 \qquad
 b_i=\sum_{j=0}^i q_jT^{-2j}\frac{\binom ij}{\binom Nj}.
\]
For $0\le t\le1$, the basis functions are nonnegative and sum to
$1$, so $Q(T^{-2}t)\le\max_i b_i$.
The program checks $\max_i b_i\le B^2$ by exact rational arithmetic.
Together with \eqref{eq:amplitude}, this proves $|H(s)|\le B$
throughout the tail, including any nonzero limiting oscillation amplitude.

All three ranges pass for every certificate in
Table~\ref{tab:bounds}. This establishes \eqref{eq:certificate}.
The program also checks the exact fraction $B/(1+B)$, its upward-rounded
decimal display, and the integer $\lceil1+1/B\rceil$.

\section{Reproducing the verification}

Keep \texttt{section9\_derivative\_certificates.json} beside
\texttt{verify\_section9\_derivatives.py} and run
\begin{verbatim}
python3 verify_section9_derivatives.py > section9_verification.jsonl
\end{verbatim}
Run standard Python~3 without its \texttt{-O} or \texttt{-OO} options,
which disable assertions. Optional dimension arguments select a subset,
for example \texttt{python3 verify\_section9\_derivatives.py 5 12 24}.
The default run checks all twenty certificates. A successful process
exits with status zero and prints one JSON record per dimension.

The accompanying log was produced by a complete successful run.
Its approximate summary values and timings are for inspection;
all acceptance decisions use rational comparisons or intervals computed
with directed rounding. The coefficient data and verifier suffice to
repeat the checks independently. The search optimizer is not part of
the verification.

The archive also includes \texttt{section9\_bounds.csv}, containing the
twenty table rows, and \texttt{SHA256SUMS.txt}, which identifies the
distributed file versions. The printed coefficients in the next section
are exactly the rational decimals in the JSON file.

\clearpage
\section{Exact coefficient lists}\label{sec:coefficients}

In every dimension $c_0=1$. The following tables give all the remaining
coefficients; $B$, $K$ and $T$ are in Table~\ref{tab:bounds}.
No decimal in these lists denotes an approximation.

\noindent
\begin{minipage}[t]{0.47\textwidth}
\centering\fontsize{9}{11}\selectfont\renewcommand{\arraystretch}{0.94}
\textbf{Dimension $d=5$}\par\vspace{3pt}
\begin{tabular}{@{}rr@{}}
$j$ & $c_j$\\\hline
1 & \texttt{-0.066144184854403612}\\
2 & \texttt{0.033072670895074228}\\
\end{tabular}
\end{minipage}
\hfill
\begin{minipage}[t]{0.47\textwidth}
\centering\fontsize{9}{11}\selectfont\renewcommand{\arraystretch}{0.94}
\textbf{Dimension $d=6$}\par\vspace{3pt}
\begin{tabular}{@{}rr@{}}
$j$ & $c_j$\\\hline
1 & \texttt{-0.130564833102734951}\\
2 & \texttt{0.065280859385139334}\\
\end{tabular}
\end{minipage}
\par\vspace{6pt}
\noindent
\begin{minipage}[t]{0.47\textwidth}
\centering\fontsize{9}{11}\selectfont\renewcommand{\arraystretch}{0.94}
\textbf{Dimension $d=7$}\par\vspace{3pt}
\begin{tabular}{@{}rr@{}}
$j$ & $c_j$\\\hline
1 & \texttt{-0.191402895212792135}\\
2 & \texttt{0.095708212875307230}\\
3 & \texttt{0.000000486783660435}\\
\end{tabular}
\end{minipage}
\hfill
\begin{minipage}[t]{0.47\textwidth}
\centering\fontsize{9}{11}\selectfont\renewcommand{\arraystretch}{0.94}
\textbf{Dimension $d=8$}\par\vspace{3pt}
\begin{tabular}{@{}rr@{}}
$j$ & $c_j$\\\hline
1 & \texttt{-0.250964975520935996}\\
2 & \texttt{0.125483317171346254}\\
3 & \texttt{0.000003164075371536}\\
\end{tabular}
\end{minipage}
\par\vspace{6pt}
\noindent
\begin{minipage}[t]{0.47\textwidth}
\centering\fontsize{9}{11}\selectfont\renewcommand{\arraystretch}{0.94}
\textbf{Dimension $d=9$}\par\vspace{3pt}
\begin{tabular}{@{}rr@{}}
$j$ & $c_j$\\\hline
1 & \texttt{-0.362017742401780807}\\
2 & \texttt{0.199038483006752892}\\
3 & \texttt{-0.020650157848473918}\\
4 & \texttt{0.004015587653468221}\\
\end{tabular}
\end{minipage}
\hfill
\begin{minipage}[t]{0.47\textwidth}
\centering\fontsize{9}{11}\selectfont\renewcommand{\arraystretch}{0.94}
\textbf{Dimension $d=10$}\par\vspace{3pt}
\begin{tabular}{@{}rr@{}}
$j$ & $c_j$\\\hline
1 & \texttt{-0.471646755847647536}\\
2 & \texttt{0.278701422375559782}\\
3 & \texttt{-0.047636823244562337}\\
4 & \texttt{0.009527300708383315}\\
\end{tabular}
\end{minipage}
\par\vspace{6pt}
\noindent
\begin{minipage}[t]{0.47\textwidth}
\centering\fontsize{9}{11}\selectfont\renewcommand{\arraystretch}{0.94}
\textbf{Dimension $d=11$}\par\vspace{3pt}
\begin{tabular}{@{}rr@{}}
$j$ & $c_j$\\\hline
1 & \texttt{-0.567901686579266207}\\
2 & \texttt{0.353247174659388707}\\
3 & \texttt{-0.075386234541086866}\\
4 & \texttt{0.015419786377334148}\\
5 & \texttt{0.000000126371535913}\\
\end{tabular}
\end{minipage}
\hfill
\begin{minipage}[t]{0.47\textwidth}
\centering\fontsize{9}{11}\selectfont\renewcommand{\arraystretch}{0.94}
\textbf{Dimension $d=12$}\par\vspace{3pt}
\begin{tabular}{@{}rr@{}}
$j$ & $c_j$\\\hline
1 & \texttt{-0.656408012141682096}\\
2 & \texttt{0.425949933617937759}\\
3 & \texttt{-0.104726426426660954}\\
4 & \texttt{0.021817257616005257}\\
5 & \texttt{0.000000104600839289}\\
\end{tabular}
\end{minipage}
\par\vspace{6pt}
\noindent
\begin{minipage}[t]{0.47\textwidth}
\centering\fontsize{9}{11}\selectfont\renewcommand{\arraystretch}{0.94}
\textbf{Dimension $d=13$}\par\vspace{3pt}
\begin{tabular}{@{}rr@{}}
$j$ & $c_j$\\\hline
1 & \texttt{-0.784716293839608392}\\
2 & \texttt{0.553707526598557598}\\
3 & \texttt{-0.175999077673894083}\\
4 & \texttt{0.044951075871507200}\\
5 & \texttt{-0.003880892695589451}\\
6 & \texttt{0.000474302732300296}\\
\end{tabular}
\end{minipage}
\hfill
\begin{minipage}[t]{0.47\textwidth}
\centering\fontsize{9}{11}\selectfont\renewcommand{\arraystretch}{0.94}
\textbf{Dimension $d=14$}\par\vspace{3pt}
\begin{tabular}{@{}rr@{}}
$j$ & $c_j$\\\hline
1 & \texttt{-0.903698324846007317}\\
2 & \texttt{0.682235291916104325}\\
3 & \texttt{-0.254397246224883189}\\
4 & \texttt{0.072913789422781664}\\
5 & \texttt{-0.009367498920941438}\\
6 & \texttt{0.001170922078261791}\\
\end{tabular}
\end{minipage}
\par\vspace{6pt}
\newpage
\noindent
\begin{minipage}[t]{0.47\textwidth}
\centering\fontsize{9}{11}\selectfont\renewcommand{\arraystretch}{0.94}
\textbf{Dimension $d=15$}\par\vspace{3pt}
\begin{tabular}{@{}rr@{}}
$j$ & $c_j$\\\hline
1 & \texttt{-1.006820977802063588}\\
2 & \texttt{0.803653953819001732}\\
3 & \texttt{-0.336029969816147855}\\
4 & \texttt{0.105845324480242003}\\
5 & \texttt{-0.017003943162352684}\\
6 & \texttt{0.002166987823016599}\\
7 & \texttt{0.000000027328550885}\\
\end{tabular}
\end{minipage}
\hfill
\begin{minipage}[t]{0.47\textwidth}
\centering\fontsize{9}{11}\selectfont\renewcommand{\arraystretch}{0.94}
\textbf{Dimension $d=16$}\par\vspace{3pt}
\begin{tabular}{@{}rr@{}}
$j$ & $c_j$\\\hline
1 & \texttt{-1.104975919339296064}\\
2 & \texttt{0.926577936775460076}\\
3 & \texttt{-0.423725031536075714}\\
4 & \texttt{0.143301196637574818}\\
5 & \texttt{-0.026297130403147694}\\
6 & \texttt{0.003408825991131504}\\
7 & \texttt{0.000000024658646875}\\
\end{tabular}
\end{minipage}
\par\vspace{6pt}
\noindent
\begin{minipage}[t]{0.47\textwidth}
\centering\fontsize{9}{11}\selectfont\renewcommand{\arraystretch}{0.94}
\textbf{Dimension $d=17$}\par\vspace{3pt}
\begin{tabular}{@{}rr@{}}
$j$ & $c_j$\\\hline
1 & \texttt{-1.237723724775380951}\\
2 & \texttt{1.117249702351442275}\\
3 & \texttt{-0.581994214295437295}\\
4 & \texttt{0.223932499048137279}\\
5 & \texttt{-0.052680776297886221}\\
6 & \texttt{0.008864258011452238}\\
7 & \texttt{-0.000639024501353064}\\
8 & \texttt{0.000057059595936094}\\
\end{tabular}
\end{minipage}
\hfill
\begin{minipage}[t]{0.47\textwidth}
\centering\fontsize{9}{11}\selectfont\renewcommand{\arraystretch}{0.94}
\textbf{Dimension $d=18$}\par\vspace{3pt}
\begin{tabular}{@{}rr@{}}
$j$ & $c_j$\\\hline
1 & \texttt{-1.363089411037665455}\\
2 & \texttt{1.310154641526465102}\\
3 & \texttt{-0.753188385651569869}\\
4 & \texttt{0.318590501332819409}\\
5 & \texttt{-0.087214584768572412}\\
6 & \texttt{0.017122988693912971}\\
7 & \texttt{-0.001842640407911840}\\
8 & \texttt{0.000167505650662529}\\
\end{tabular}
\end{minipage}
\par\vspace{6pt}
\noindent
\begin{minipage}[t]{0.47\textwidth}
\centering\fontsize{9}{11}\selectfont\renewcommand{\arraystretch}{0.94}
\textbf{Dimension $d=19$}\par\vspace{3pt}
\begin{tabular}{@{}rr@{}}
$j$ & $c_j$\\\hline
1 & \texttt{-1.474194325773747982}\\
2 & \texttt{1.490523877762282901}\\
3 & \texttt{-0.921639869382412380}\\
4 & \texttt{0.417850615380876478}\\
5 & \texttt{-0.126433201115922045}\\
6 & \texttt{0.027475547666212328}\\
7 & \texttt{-0.003538232209301139}\\
8 & \texttt{0.000326927673992952}\\
9 & \texttt{-0.000000006500570853}\\
\end{tabular}
\end{minipage}
\hfill
\begin{minipage}[t]{0.47\textwidth}
\centering\fontsize{9}{11}\selectfont\renewcommand{\arraystretch}{0.94}
\textbf{Dimension $d=20$}\par\vspace{3pt}
\begin{tabular}{@{}rr@{}}
$j$ & $c_j$\\\hline
1 & \texttt{-1.579945474809915984}\\
2 & \texttt{1.670544725872577629}\\
3 & \texttt{-1.096740261161917251}\\
4 & \texttt{0.525461623853863191}\\
5 & \texttt{-0.170906953294798852}\\
6 & \texttt{0.039766945626827054}\\
7 & \texttt{-0.005655438627376280}\\
8 & \texttt{0.000530237000008302}\\
9 & \texttt{-0.000000007058310060}\\
\end{tabular}
\end{minipage}
\par\vspace{6pt}
\noindent
\begin{minipage}[t]{0.47\textwidth}
\centering\fontsize{9}{11}\selectfont\renewcommand{\arraystretch}{0.94}
\textbf{Dimension $d=21$}\par\vspace{3pt}
\begin{tabular}{@{}rr@{}}
$j$ & $c_j$\\\hline
1 & \texttt{-1.710888986722522764}\\
2 & \texttt{1.915840980982849828}\\
3 & \texttt{-1.358974851720881505}\\
4 & \texttt{0.705094047699715509}\\
5 & \texttt{-0.255936726616214749}\\
6 & \texttt{0.067805905436844796}\\
7 & \texttt{-0.012007024130289834}\\
8 & \texttt{0.001494166175592522}\\
9 & \texttt{-0.000087483458378013}\\
10 & \texttt{0.000006155975513212}\\
\end{tabular}
\end{minipage}
\hfill
\begin{minipage}[t]{0.47\textwidth}
\centering\fontsize{9}{11}\selectfont\renewcommand{\arraystretch}{0.94}
\textbf{Dimension $d=22$}\par\vspace{3pt}
\begin{tabular}{@{}rr@{}}
$j$ & $c_j$\\\hline
1 & \texttt{-1.832604677997432319}\\
2 & \texttt{2.158606485315783363}\\
3 & \texttt{-1.634973847658425017}\\
4 & \texttt{0.908659885468182438}\\
5 & \texttt{-0.361036469275248073}\\
6 & \texttt{0.106238823990947043}\\
7 & \texttt{-0.021856998384023118}\\
8 & \texttt{0.003220769629351691}\\
9 & \texttt{-0.000280549435446234}\\
10 & \texttt{0.000020038883914574}\\
\end{tabular}
\end{minipage}
\par\vspace{6pt}
\noindent
\begin{minipage}[t]{0.47\textwidth}
\centering\fontsize{9}{11}\selectfont\renewcommand{\arraystretch}{0.94}
\textbf{Dimension $d=23$}\par\vspace{3pt}
\begin{tabular}{@{}rr@{}}
$j$ & $c_j$\\\hline
1 & \texttt{-1.947858626197330301}\\
2 & \texttt{2.400359832064227206}\\
3 & \texttt{-1.922938413463691765}\\
4 & \texttt{1.132479341164536413}\\
5 & \texttt{-0.483562893094953761}\\
6 & \texttt{0.154151064019892037}\\
7 & \texttt{-0.035133563154735807}\\
8 & \texttt{0.005763978416708618}\\
9 & \texttt{-0.000595670794165965}\\
10 & \texttt{0.000043136442214113}\\
11 & \texttt{-0.000000000504942887}\\
\end{tabular}
\end{minipage}
\hfill
\begin{minipage}[t]{0.47\textwidth}
\centering\fontsize{9}{11}\selectfont\renewcommand{\arraystretch}{0.94}
\textbf{Dimension $d=24$}\par\vspace{3pt}
\begin{tabular}{@{}rr@{}}
$j$ & $c_j$\\\hline
1 & \texttt{-2.058035013443602512}\\
2 & \texttt{2.641322752468249213}\\
3 & \texttt{-2.220766702884943822}\\
4 & \texttt{1.373394535733419408}\\
5 & \texttt{-0.621363707651894304}\\
6 & \texttt{0.210809307189059353}\\
7 & \texttt{-0.051785471916799525}\\
8 & \texttt{0.009172399088840927}\\
9 & \texttt{-0.001047904852589911}\\
10 & \texttt{0.000076849427739887}\\
11 & \texttt{-0.000000000336875450}\\
\end{tabular}
\end{minipage}
\par